\pgfplotsset{compat=1.18}
\newlength{\drop}
\newcommand{\R}{\mathbb{R}}
\newcommand{\N}{\mathbb{N}}
\newcommand{\Z}{\mathbb{Z}} 
\newcommand{\E}{\mathbb{E}} 
\newcommand{\Ball}{\mathbb{B}} 
\newcommand{\Sph}{\mathbb{S}}
\providecommand{\dom}{\mathop{\rm dom}\nolimits}
\providecommand{\gph}{\mathop{\rm gph}\nolimits}
\providecommand{\rint}{\mathop{\rm rint}\nolimits}
\providecommand{\ext}{\mathop{\rm ext}\nolimits}
\providecommand{\co}{\mathop{\rm co}\nolimits}
\providecommand{\cco}{\mathop{\overline{\rm co}}\nolimits}
\providecommand{\argmin}{\mathop{\rm argmin}}
\newcommand{\ind}{\mathds{1}}
\providecommand{\wto}{\mathop{\rightharpoonup}\nolimits}
\providecommand{\tto}{\mathop{\rightrightarrows}\nolimits}
\definecolor{deepcarmine}{rgb}{0.66, 0.13, 0.24}
\newtcolorbox{warning}{colback=white,
  colframe=deepcarmine!20,fonttitle=\bfseries,coltitle=black,
  title={Warning}}
\definecolor{forestgreenWeb}{rgb}{0.13, 0.55, 0.13}
\newtcolorbox{note}[1]{colback=white,
  colframe=forestgreenWeb!20,fonttitle=\bfseries,coltitle=black,
  title={#1}}
\newtcolorbox{formulation}[1]{colback=white,
  colframe=DarkOrchid!20,fonttitle=\bfseries,coltitle=black,
  title={#1}}
\definecolor{UOHblue}{cmyk}{1, 0.4, 0, 0}
\titleformat{\chapter}[display]{\normalfont\Large}
{\titlerule\vspace{10pt}\scshape\chaptername\
  \Huge\bfseries\thechapter}
{5pt}{\titlerule[1pt]\vspace{1pt}\titlerule\vspace{20pt}\Huge\bfseries}
[\vspace{30pt}]
\titleformat{\section}[runin]{\Large\bfseries}
{\thesection}{1em}{}[\hspace{1em}\hrulefill\\\medskip\\]
\titleformat{\subsection}{\large\bfseries} {\thesubsection}{1em}{}
\newtheorem{theorem}{Theorem}[section]
\newtheorem{definition}[theorem]{Definition} 
\newtheorem{lemma}[theorem]{Lemma}
\newtheorem{corollary}[theorem]{Corollary} 
\newtheorem{proposition}[theorem]{Proposition}
\theoremstyle{definition}
\newtheorem{example}[theorem]{Example}
\newtheorem{problem}{Problem}[chapter]
\begin{document}
 
% ---------------------------------------------------------------------
% TITLE PAGE
% ---------------------------------------------------------------------

\def\codigo{Lecture Notes}
\def\ramo{Introduction to Bilevel Optimization: A perspective from Variational Analysis}
\def\autores{David Salas}
\def\semestre{Fall}
\def\year{2024}

%%% Local Variables:
%%% mode: LaTeX
%%% TeX-master: "apunte"
%%% End:

\begin{titlepage}
  \drop=0.12\textheight
  \centering
  \vspace*{\drop}
  
  \scshape
  {\large Toulouse School of Economics\\ Master in Mathematics and Economic Decision}\\[\baselineskip]
  
  \rule{\textwidth}{1.6pt}\vspace*{-\baselineskip}\vspace*{2pt}
  \rule{\textwidth}{0.4pt}\\[\baselineskip]
  
  {\LARGE\bf \codigo \\[\baselineskip] \ramo}\\[0.2\baselineskip]

  \rule{\textwidth}{0.4pt}\vspace*{-\baselineskip}\vspace{3.2pt}
  \rule{\textwidth}{1.6pt}\\[\drop]

  {\Large\autores}\\[1.5\drop]
  \begin{minipage}{0.49\textwidth}
  \centering
  {\includegraphics[width=0.5\textwidth]{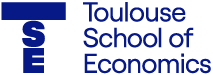}
  }
  \end{minipage}
  \begin{minipage}{0.49\textwidth}
  \centering
  {\includegraphics[width=0.6\textwidth]{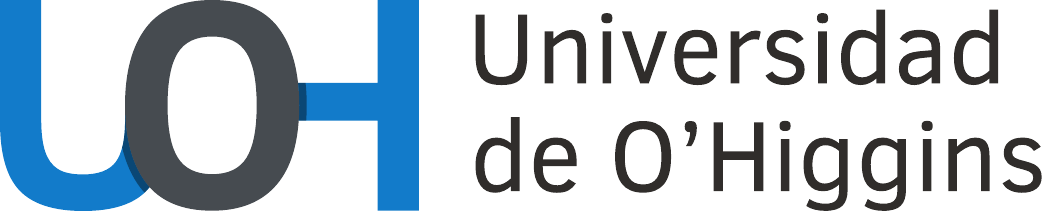}}
  \end{minipage}\\[\drop]
  {\large\semestre-\year}\par
  \null

\end{titlepage}

% ---------------------------------------------------------------------
% AGRADECIMIENTOS
%\iffalse
% ---------------------------------------------------------------------

\pagestyle{plain}
\newpage
\cleardoublepage \vspace*{\fill}
\chapter*{Preface}
\parskip = 10pt
Game Theory can be described as the branch of mathematics that studies the interactions of multiple agents whose decisions affect each others. The field gain huge popularity after the groundbreaking contribution of John Nash \citep{Nash1950}. However, the study of economical equilibia between agents can be traced back to mid 19th century, with the duopoly model of Antoine Agustin Cournot \citep{cournot1838}. The model of Cournot considered two similar companies competing in a noncooperative market, and reaching an economical equilibrium due to rationality.

In 1934 (still 16 years prior to the contributions of Nash), Heinrich von Stackelberg proposed in \citep{Stackelberg1934} a variant of Cournot competition model, by considering that the two companies were different: one of them had much more market power than the other. The idea of Stackelberg was to assume that the bigger company, known as the \textit{leader}, knew somehow the decision problem of the smaller one, known as the follower, and therefore was capable of anticipate her reaction. The leader then takes this information into account, solving her decision problem by including the anticipated reaction given by the (parametric) decision problem of the follower.

This is the origin of bilevel programming: an optimization problem (the leader's problem) that takes into account the parametric solution of a surrogate optimization problem (the follower's problem). The subject resurfaced in the 70's and has gained popularity since, due to its numerous applications in game theory, economics, engineering, operations. Several monographs are already available  \citep{Dempe2002Foundations,DempeEtAl2015BilevelApplications,DempeZemkoho2020BilevelAdvances,AusselLalitha2017GNEPandBilevel,BeckSchmidt2023GentleIntro}, and the subject is an active field of research.

In this lecture notes, our aim is to give a quick introduction to the field of bilevel programming from the perspective of parametric optimization and variational analysis. In Chapter \ref{Chapter02:Model}, we will develop the "Cournot versus Stackelberg" discussion, present the model of bilevel programming and the fundamental case of linear bilevel programming, and review some modern applications.  Chapter \ref{Chapter03:Existence} is devoted to the fundamental problem of existence of solutions. We provide a quick overview of fundamentals in set-valued analysis and parametric optimization, and then present the main existence results for bilevel optimization. Chapter \ref{Chapter04:Algorithms} is devoted to the elementary methods to solve bilevel programming problems. Here we pay particular attention to the linear setting, providing hardness results and presenting some concrete algorithms to solve them. Finally, Chapter \ref{Chapter05:Extensions} explores some state-of-the-art extensions of the Stackelberg model. 

These lecture notes are by no means a comprehensive presentation of all the contributions in the field. It is rather, what I expect to be, a pedagogical introduction. Bilevel programming is at the crossroad of many fields, such as Variational Analysis, Mathematical Programming, Computers Science and Operations Research. The presentation here is strongly influenced by my background on Set-Valued Analysis and Nonconvex/Nonsmooth Optimization. For an alternative point of view, closer to Mathematical Programming and Operations Research, I strongly recommend to have a look to the lecture notes of Yasmine Beck and Martin Schmidt \citep{BeckSchmidt2023GentleIntro}, publicly available at \url{https://optimization-online.org/2021/06/8450/}.

As a disclaimer, these lecture notes might have many errors, and some notions such as duality and computational complexity are explained in a very superficial manner. If you as a reader spot any typos or conceptual mistakes, please do not hesitate in contact me. I would definitely like to improve this notes with the time.

\vfill
\begin{flushright}
\textit{David Salas Videla}\\
\textit{September 2024}
\end{flushright}
\vfill

% ---------------------------------------------------------------------
%fi

% ---------------------------------------------------------------------
% TABLE OF CONTENTS
% ---------------------------------------------------------------------

\cleardoublepage
\parskip=10pt
\tableofcontents
\cleardoublepage

% ---------------------------------------------------------------------
% CHAPTERS
% ---------------------------------------------------------------------
\parskip = 2pt
\chapter{The Model of Bilevel Optimization}
\label{Chapter02:Model}

\quad In this chapter we will present the model of Bilevel programming: an optimization problem that takes into account the solutions of another (parametric) surrogate optimization problem. This idea was first introduced by H. von Stackelberg in 1934 \citep*{Stackelberg1934}. The first comprenhensive monograph on the subject (to the best of my knowledge) is due to S. Dempe \citep*{Dempe2002Foundations}. Most of the elements of this chapter can be found there.

\section{Stackelberg Duopoly}
\label{ch02:sec:Stackelberg}
%%%%%%%%%%%%%%%%%%%%%%%%%%%%%%%%%%%%%%%%%%%%%
%%% Cournot Duopoly
%%%%%%%%%%%%%%%%%%%%%%%%%%%%%%%%%%%%%%%%%%%%%

In 1938, A. A. Cournot introduced his celebrated model for economic equilibrium of two companies (duopoly), nowadays known as the Cournot competition model \citep*{cournot1838}. The problem that Cournot studied was as follows:

\begin{formulation}{Cournot Dupololy}
  Two companies $E_1$ and $E_2$ produce the same (fractional) good, at marginal cost $c>0$. Each company $E_i$ ($i=1,2$) must decide its production $q_i \in [0,+\infty)$. All the production $q=q_1+q_2$ is consumed, but the selling price varies depending on $q$. We consider that the price is given by $p(q) = p_0 -\alpha q$, with $p_0>c$. The companies aim to maximize their revenues. The final formulation is given by
  
  \begin{equation}\label{ch02:eq:CournotFormulation}
    \begin{array}{|c|c|}
        \hline
        &\\
        \begin{array}{cl}
            \displaystyle\max_{q_1}   & (p(q_1+q_2)-c)q_1\\
            \text{s.t.} & q_1\geq 0.
        \end{array}
        &
        \begin{array}{cl}
            \displaystyle\max_{q_2}   & (p(q_1+q_2)-c)q_2\\
            \text{s.t.} & q_2\geq 0.
        \end{array}\\
        
        &\\
        \hline
        \text{Company $E_1$}&\text{Company $E_2$}\\
        \hline
    \end{array}
\end{equation}
\end{formulation}

Note that in each of the problems of \eqref{ch02:eq:CournotFormulation}, the problem of the company is affected by the decision of the other one (as it is the trend in Game Theory). The fundamental idea of Cournot was to understand the problem of each company as a parametric optimization problem, parametrized by the decision of the other one. Then, one can compute what is nowadays known as the \textit{best response function}. For company $E_1$, the best responde is given by
\begin{equation}\label{ch02:eq:BestResponseCournot}
    B_1:q_2\in [0,+\infty)\mapsto \argmin_{q_1\geq 0}\{ -(p_0-\alpha(q_1+q_2) - c)q_1 \}. 
\end{equation}
Let us fix $q_2\geq 0$, and compute $B_1(q_2)$. To do so, we can consider the Lagrangian of Company $E_1$'s problem in \eqref{ch02:eq:CournotFormulation}, that is,
\[
\mathcal{L}(p,\mu) = -(p_0 - \alpha(q + q_2) - c)q - \mu q.
\]
Then, the first-order optimality conditions of the problem are given by
\begin{subequations}\label{ch02:eq:KKTCournotE1}
\begin{align}
    \frac{\partial\mathcal{L}}{\partial q} = -(p_0 - \alpha(q + q_2) - c) + \alpha q + \mu &= 0\label{ch02:eq:KKTCournotE1-dL}\\
    \mu&\geq 0\\
    q&\geq 0\\
    \mu q &= 0.
\end{align}
\end{subequations}
Then, solving $q$ in \eqref{ch02:eq:KKTCournotE1-dL} and applying the other constraints in \eqref{ch02:eq:KKTCournotE1}, we deduce that 
\[
q^* = \frac{p_0 - \alpha q_2-c +\mu}{2\alpha} = \begin{cases}
    0\quad&\text{ if } p_0 - \alpha q_2-c<0,\\
    \\
    \displaystyle\frac{p_0 -c- \alpha q_2}{2\alpha}\quad&\text{ otherwise.}
\end{cases}
\]
That is, $q=\max\left\{0,\frac{p_0 -c- \alpha q_2}{2\alpha}\right\}$. Since $E_1$'s problem in \eqref{ch02:eq:CournotFormulation} is concave in $q_1$, then $q_1 = q$ is its (unique) solution. Therefore,
\[
B_1(q_2) = \max\left\{0,\frac{p_0 - c-  \alpha q_2}{2\alpha}\right\}.
\]
By a symmetry argument, for a given $q_1\geq 0$, we can define the best response of company $E_2$ as
\[
B_2(q_1) = \max\left\{0,\frac{p_0 -c- \alpha q_1}{2\alpha}\right\}.
\]
The Cournot equilibrium is given by the pair $(q_1^*,q_2^*)\in\R_+^2$ such that each decision of one of the companies is the best response to the decision of the other one, that is $(q_1^*,q_2^*) = (B_1(q_2^*), B_2(q_1^*))$. 

Now, since $p_0>c$, one can easily verify that $(0,0)$ can not be a Cournot equilibrium. Let us suppose then that $q_1^*>0$. Then, $q_1^* = (p_0 - c - \alpha q_2^*)/2\alpha$ and we can write:
\begin{align*}
    q_2^* &= \max\left\{0, \frac{p_0 -c -\alpha\left(\frac{p_0 - c - \alpha q_2^*}{2\alpha}\right) }{2\alpha}\right\}\\
    &= \max\{ 0, \frac{p_0 - c}{4\alpha} + \frac{1}{4}q_2^* \}\\
    &= \frac{p_0 - c}{4\alpha} + \frac{1}{4}q_2^*.
\end{align*}
Thus, by solving $q_2^*$ in the above equation, we get that $q_2^* = \frac{p_0 - c}{3\alpha}$. Then, we obtain that  $q_1^* = \frac{p_0 - c}{3\alpha}$ as well. Noting that, by symmetry, assuming that $q_2^*>0$ would have led us to the same conclusion, we deduce that the (unique) Cournot equilibrium is given by
\begin{equation}\label{ch02:eq:SolutionCournot}
    (q_1^*,q_2^*) = \left( \frac{p_0 - c}{3\alpha},\frac{p_0 - c}{3\alpha}\right).
\end{equation}
The profit for each company is given by
\begin{equation}\label{ch02:eq:ProfitCournot}
    \text{Profit: }\,\left( p_0 - c - \alpha\left(\frac{2(p_0 - c)}{3\alpha}\right)\right)\frac{p_0 - c}{3\alpha} = \frac{(p_0 - c)^2}{9\alpha}
\end{equation}

%%%%%%%%%%%%%%%%%%%%%%%%%%%%%%%%%%%%%%%%%%%%%
%%% Stackelberg Duopoly
%%%%%%%%%%%%%%%%%%%%%%%%%%%%%%%%%%%%%%%%%%%%%

In 1934, in his thesis dissertation \citep*{Stackelberg1934}, H. von Stackelberg proposed an alternative to Cournot equilibrium. The idea was to distinguish both companies assuming that one of them had knowledge about the other. On the one hand, Stackelberg considered that the company $E_1$, now the leader, new the problem of company $E_2$ and was able to consider the best response function $B_2(q_1)$ into account. On the other one, the company $E_2$, now the follower, would only be able to react to the decisions of $E_1$, solving the same problem as it did before. 

\begin{formulation}{Stackelberg Dupololy}
  Two companies $E_1$ and $E_2$ produce the same (fractional) good, at marginal cost $c>0$. Each company $E_i$ ($i=1,2$) must decide its production $q_i \in [0,+\infty)$. All the production $q=q_1+q_2$ is consumed, but the selling price varies depending on $q$. We consider that the price is given by $p(q) = p_0 -\alpha q$, with $p_0>c$. The companies aim to maximize their revenues. Company $E_1$ knows the best response function of company $E_2$ and takes this information into account. The final formulation is
  
  \begin{equation}\label{ch02:eq:StackelbergFormulation}
    \begin{array}{|c|c|}
        \hline
        &\\
        \begin{array}{cl}
            \displaystyle\max_{q_1}   & (p(q_1+B_2(q_1))-c)q_1\\
            \text{s.t.} & q_1\geq 0.
        \end{array}
        &
        \begin{array}{cl}
            \displaystyle\max_{q_2}   & (p(q_1+q_2)-c)q_2\\
            \text{s.t.} & q_2\geq 0.
        \end{array}\\
        
        &\\
        \hline
        \text{Company $E_1$ (Leader)}&\text{Company $E_2$ (Follower)}\\
        \hline
    \end{array}
\end{equation}
\end{formulation}
Note that in this second formulation, the problem of the leader does not really involve the follower's decision variable at all. Therefore, it can be solved directly. The objective function of the leader is given by
\begin{align*}
\theta(q_1) &= (p(q_1+B_2(q_1))-c)q_1\\
&= (p_0 - c -\alpha(q_1 + B_2(q_2)))q_1 = \left(p_0 - c -\alpha\left(q_1 + \max\left\{0,\frac{p_0-c-\alpha q_1}{2\alpha}\right\}\right)\right)q_1
\end{align*}
Note that, if $p_0-c-\alpha q_1 < 0$, then $\theta(q_1) < 0 = \theta(0)$. Thus, we can assume that $p_0-c-\alpha q_1 >0$. In such a case, we can write
\begin{align*}
   \theta(q_1) &=  \left(p_0 - c -\alpha\left(q_1 + \frac{p_0-c-\alpha q_1}{2\alpha}\right)\right)q_1\\
   &= \frac{p_0 - c}{2} q_1 -\frac{\alpha}{2} q_1^2.
\end{align*}
Thus, the objective function of the leader is concave. By further assuming that the solution $q_1^*>0$,  we can write Fermat's rule
\[
\frac{\partial \theta}{\partial q_1}(q_1^*) = 0 \implies q_1^* = \frac{p_0 - c}{2\alpha}.
\]
Since $q_1^* = \frac{p_0 - c}{2\alpha}$ verifies both assumptions, it is a critical point of the problem. Comparing the objective value at $q_1^*$ with the only other possible critical point $q_1 = 0$, we get
\[
\theta(q_1^*) = \frac{p_0 - c}{2}\left(\frac{p_0 - c}{2\alpha}\right) -\frac{\alpha}{2} \left(\frac{p_0 - c}{2\alpha}\right)^2 = \frac{(p_0-c)^2}{8\alpha} > 0 = \theta(0).
\]
Thus, $q_1^* = \frac{p_0 - c}{2\alpha}$ is the optimal decision of the leader. The follower then solves its problem by choosing
\[
q_2^* = B_2(q_1^*) = \max\left\{0,\frac{p_0-c-\alpha q_1}{2\alpha}\right\} = \frac{p_0 - c}{4\alpha}.
\]
That is, the equilibrium is now given by
\begin{equation}\label{ch02:eq:SolutionStackelberg}
    (q_1^*,q_2^*) = \left(\frac{p_0 - c}{2\alpha}, \frac{p_0 - c}{4\alpha}\right)
\end{equation}
Note that in this model the symmetry is no longer. Moreover, by evaluating the objective functions of each company, we get that
\begin{subequations}\label{ch02:eq:ProfitStackelberg}
 \begin{align}
    \text{Leader: }\left( p_0 - c - \alpha(q_1^* + q_2^*)\right)q_1^* &= \left( p_0 - c - \frac{3(p_0 - c)}{4}\right) \frac{p_0 -c}{2\alpha}= \frac{(p_0 -c)^2}{8\alpha}.\\
    \text{Follower: }\left( p_0 - c - \alpha(q_1^* + q_2^*)\right)q_1^* &= \left( p_0 - c - \frac{3(p_0 - c)}{4}\right) \frac{p_0 -c}{4\alpha}= \frac{(p_0 -c)^2}{16\alpha}.
\end{align}   
\end{subequations}

Thus, the model of Stackelberg gives a clear advantage for the leader. By accessing $B_2(q_1)$ the leader knows how her decisions will affect the follower, and she uses the information to deviate from Cournot equilibrium. In contrast, the follower's acts as if the situation were the same as in Cournot formulation: the decision of the leader is treated as exogenous information to which she can only reacts to.

\begin{table}[ht]  
    \centering
    \setlength{\extrarowheight}{6pt}
    \begin{tabular}{|c|c|c|}
    \hline
         & Company $E_1$ (Leader) & Company $E_2$ (Follower) \\[6pt]
         \hline
      Decision Cournot   & $\displaystyle\frac{p_0 - c}{3\alpha}$ & $\displaystyle\frac{p_0 - c}{3\alpha}$\\[10pt]
      Profit Cournot & $\displaystyle\frac{(p_0 - c)^2}{9\alpha}$ & $\displaystyle\frac{(p_0 - c)^2}{9\alpha}$\\[10pt]
      \hline
      Decision Stackelberg   & $\displaystyle\frac{p_0 - c}{2\alpha}$ & $\displaystyle\frac{p_0 - c}{4\alpha}$\\[10pt]
      Profit Stackelberg & $\displaystyle\frac{(p_0 - c)^2}{8\alpha}$ & $\displaystyle\frac{(p_0 - c)^2}{16\alpha}$\\[10pt]
      \hline
    \end{tabular}
    \caption{Comparison table: Cournot vs Stackelberg}
    \label{ch02:table:ComparisonCournotStackelberg}
\end{table}

%%%%%%%%%%%%%%%%%%%%%%%%%%%%%%%%%%%%%%%%%%%%%
%%%%%%%%%%%%%%%%%%%%%%%%%%%%%%%%%%%%%%%%%%%%%
%%%%%%%%%%%%%%%%%%%%%%%%%%%%%%%%%%%%%%%%%%%%%
%%%%%%%%%%%%%%%%%%%%%%%%%%%%%%%%%%%%%%%%%%%%%

\section{The model: Optimistic and Pessimistic formulations}
\label{ch02:sec:Formulation}

Motivated by the work of Stackelberg, people in Game Theory and Optimization started to look into equilibrium problems of two agents enjoying the hierarchical structure of a leader and a follower.

\begin{definition}[Bilevel Programming problem]\label{ch02:def:BilevelProgramming-IllPosed}
    Two agents, the leader and the follower, are considered. The leader decides $x\in X\subset \R^p$. For each leader's decision $x\in X$, the followers solve a parametric optimization problem, deciding $y\in K(x)\subset\R^q$. The leader and the follower aim to minimize the functions $\theta_l,\theta_f:\R^p \times \R^q \to \R$, respectively. The final formulation is given by
    \begin{equation}\label{ch02:eq:BilevelProgramming-IllPosed}
        \begin{array}{rl}
           \displaystyle\min_{x\in\R^p}  & \theta_l(x,y)  \\
            s.t. & \left\{\begin{array}{l}
                 x\in X,  \\
                 y \text{ solves } \left\{\begin{array}{rl}
           \displaystyle\min_{y\in\R^q}  & \theta_f(x,y)  \\
            s.t. & y\in K(x).
        \end{array}\right.
            \end{array}\right.
        \end{array}
    \end{equation}
A leader's decision is said to be \textbf{feasible} if $x\in X$ and the follower's problem at $x$ admits at least one solution.    
\end{definition}

\begin{warning}
    In many books and papers, the feasibility requirement of $S(x)\neq\emptyset$ is usually not written as such and it is assumed implicitly.
\end{warning}

Bilevel Programming problems, or Bilevel Optimization problems, are also known as Stackelberg games. The leader's problem is also known as the upper-level problem, and the follower's problem is also known as the lower-level problem. Hence, the name of ``Bilevel'' Programming. If, for every $x\in X$, we denote by $S(x)$ the set of solutions of the lower-level problem, we can rewrite formulation \eqref{ch02:eq:BilevelProgramming-IllPosed} as follows:

  \begin{equation}\label{ch02:eq:Stackelberg-IllPosed}
    \begin{array}{|c|c|}
        \hline
        &\\
        \begin{array}{cl}
            \displaystyle\min_{x\in \R^p}   & \theta_l(x,y)\\
            \text{s.t.} & x\in X,\, y\in S(x).
        \end{array}
        &
        \begin{array}{cl}
            \displaystyle\min_{y\in \R^q}   & \theta_l(x,y)\\
            \text{s.t.} & y\in K(x).
        \end{array}\\
        
        &\\
        \hline
        \text{Upper-level (Leader)}&\text{Lower-level (Follower)}\\
        \hline
    \end{array}
\end{equation}

The set $S(x)$ is often called the \emph{reaction set}  or the \emph{best response set} of the follower. In this formulation, we see the hierarchical structure only on the upper-level, through the constraint $y\in S(x)$. That is, the leader knows that for any given $x\in X$, the follower will react by choosing $y\in S(x)$. If such constraint would be missing, then we would be facing a classic (non-hierarchical) equilibrium problem.

One can think on three straight-forward interpretations of a Bilevel Programming problem:
\begin{enumerate}
    \item \textbf{Hierarchical game}: The leader and the follower are solving an equilibrium problem, where the leader is able to anticipate the follower's reaction. This is consistent with formulation \eqref{ch02:eq:Stackelberg-IllPosed} and was the original interpretation of Stackelberg.
    \item \textbf{Sequential game}: The leader decides first, and the follower reacts afterwards. By knowing the follower's problem, the leader anticipates its decision when solving the upper-level problem. This interpretation can be linked to Extensive-Forms Nash Games and subgame perfect equilibria (see, \textit{e.g.}, \citep*{Tadelis2013GameTheory} for the definitions). This interpretation can be read from, \textit{e.g.}, \citep*{CarusoLignolaMorgan2020Regularization}.
    \item \textbf{Leader's problem only}: One can imagine that bilevel programming is not really a game, but only the leader's problem, who has a \textit{model} of the follower at hand. That is, the lower-level is the model that allows the leader to estimate the follower's reaction $y\in S(x)$, which is then used to solve the optimization problem, that is, the upper level. Following this interpretation, we might also write problem \eqref{ch02:eq:BilevelProgramming-IllPosed} simply as
    \begin{equation}\label{ch02:eq:BilevelProgramming-Leader-IllPosed}
        \begin{array}{cl}
            \displaystyle\min_{x\in \R^p}   & \theta_l(x,y)\\
            \text{s.t.} & x\in X,\, y\in S(x),
        \end{array}
    \end{equation}
   defining $S(x)$ separately. 
\end{enumerate}

Regardless the interpretation, Problem \eqref{ch02:eq:BilevelProgramming-IllPosed} (and its other formulations \eqref{ch02:eq:Stackelberg-IllPosed} and \eqref{ch02:eq:BilevelProgramming-Leader-IllPosed}) is \textit{ill-posed}. Indeed, unless the solution set $S(x)$ is single-valued, there exist an ambiguity on how the follower's reaction is determined.
\begin{example}\label{ch02:example:MultipleSolutionsFollower}Consider the problem
\begin{equation*}
        \begin{array}{rl}
           \displaystyle\min_{x\in\R}  & |x|+y  \\
            s.t. &
                 y \text{ solves } \left\{\begin{array}{rl}
           \displaystyle\min_{y\in\R}  & -xy  \\
            s.t. & 0\leq y \leq 1.
        \end{array}\right.
            
        \end{array}
    \end{equation*}
One might try to reason as Stackelberg, and define the Cournot best response $y:x\in\R\mapsto y(x)\in\R$. However, in this problem, for $x\in \R$ we have that
\[
S(x) = \left\{\begin{array}{cl}
     \{ 0\} \quad&\text{ if }x<0,  \\
     {[0,1]}\quad &\text{ if } x= 0,\\
     \{1\}\quad &\text{ if }x>0.
\end{array}\right.
\]
Then, the objective function of the leader is given by 
\[
\theta(x) = x^2+y(x) = \left\{\begin{array}{cl}
     -x \quad&\text{ if }x<0,  \\
     {[0,1]}\quad &\text{ if } x= 0,\\
     x + 1\quad &\text{ if }x>0.
\end{array}\right.
\]
\begin{figure}[ht]
    \centering
    \includegraphics[width=0.9\linewidth]{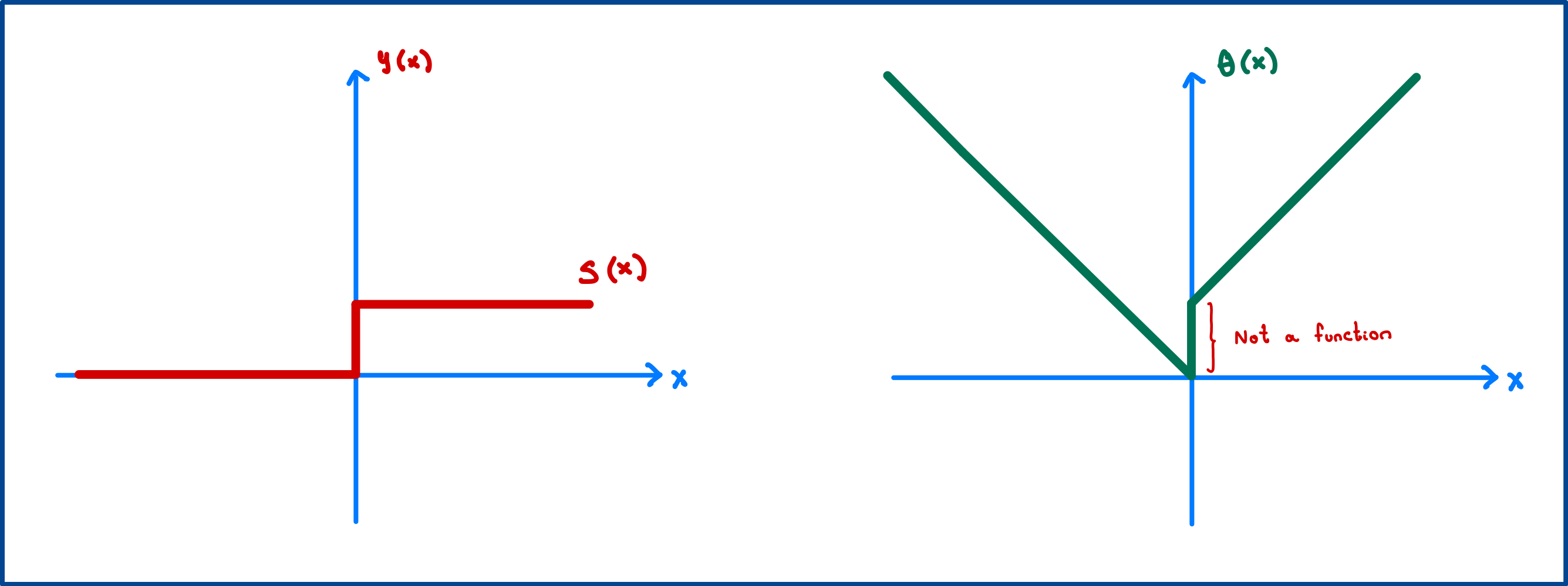}
    \caption{Graphs of $S(x)$ and $\theta(x)$, which are not functions.}
    \label{ch02:fig:example-MultipleSolutionsFollower}
\end{figure}
The situation is illustrated in Figure  \ref{ch02:fig:example-MultipleSolutionsFollower}. As one can see, this objective $\theta(\cdot)$ \textit{is not} a function. Given the information we have been given, it is not possible to determine the value of $\theta(0)$.
\end{example}

To solve this ambiguity, some extra assumptions are needed to decide how $y\in S(x)$ is selected. While many approaches have been proposed in the literature, in this notes we will revise the two most relevant in the literature: the \textit{optimistic} and \textit{pessimistic} approaches.

\paragraph{Optimistic Approach.} The idea of the optimistic approach is to think that the follower is in some sense \textit{collaborative}. Thus, among all her optimal reactions $y\in S(x)$ she will select the one that \textit{favors the leader the most}. 

\begin{definition}[Optimistic Bilevel Programming problem]\label{ch02:def:BilevelProgramming-Optimistic}
    Under the same setting as in Definition \ref{ch02:def:BilevelProgramming-IllPosed} the optimistic formulation is given by
    \begin{equation}\label{ch02:eq:BilevelProgramming-Optimistic}
        \begin{array}{rl}
           \displaystyle\min_{x\in\R^p, y \in \R^q}  & \theta_l(x,y)  \\
            s.t. & \left\{\begin{array}{l}
                 x\in X,  \\
                 y \text{ solves } \left\{\begin{array}{rl}
           \displaystyle\min_{y\in\R^q}  & \theta_f(x,y)  \\
            s.t. & y\in K(x).
        \end{array}\right.
            \end{array}\right.
        \end{array}
    \end{equation}
By defining $S(x) = \argmin_y\{ \theta_f(x,y)\, :\, y\in K(x)  \}$, we can also rewrite the optimistic formulation as
\begin{equation}\label{ch02:eq:BilevelProgramming-Leader-Optimistic}
        \begin{array}{cl}
            \displaystyle\min_{x\in \R^p,y\in\R^q}   & \theta_l(x,y)\\
            \text{s.t.} & x\in X,\, y\in S(x).
        \end{array}
    \end{equation}
\end{definition}

The difference between \eqref{ch02:eq:BilevelProgramming-IllPosed} and \eqref{ch02:eq:BilevelProgramming-Optimistic}  (as between \eqref{ch02:eq:BilevelProgramming-Leader-IllPosed} and \eqref{ch02:eq:BilevelProgramming-Leader-Optimistic}) is subtle: they are almost the same, except that now $y$ is also a decision variable of the leader. This means that the leader is allow to \textit{choose} the follower's reaction but only among those that are optimal, that is, respecting the constraint $y\in S(x)$.

One can also define the function $\varphi^o: X\to \R\cup\{+\infty\}$ given by $\varphi^o(x) = \inf_{y\in S(x)} \theta_l(x,y)$. Then, the optimistic bilevel problem \eqref{ch02:eq:BilevelProgramming-Optimistic} can be further written as
\begin{equation}\label{ch02:eq:BilevelProgramming-Optimistic-phiO}
    \begin{array}{cl}
            \displaystyle\min_{x\in \R^p}   & \varphi^o(x)\\
            \text{s.t.} & x\in X.
        \end{array}
\end{equation}
This last formulation reinforces the interpretation that bilevel programming is in fact the problem of the leader.

\paragraph{Pessimistic Approach.} The idea of the pessimistic approach is exactly the opposite of the optimistic one, that is, to think that the follower is \textit{adversarial}. Thus, among all her optimal reactions $y\in S(x)$ she will select the one that \textit{harms the leader the most}. The formulation is then a min-max type problem. To formulate it, we will proceed as in \eqref{ch02:eq:BilevelProgramming-Optimistic-phiO}.
\begin{note}{Notation: $\dom S$}
The optimistic approaches encompasses implicitly that a feasible leader decision $x\in X$ needs to verify that the follower's problem admits a solution. Indeed, if $S(x) = \emptyset$, then $\varphi^o(x) = +\infty$ due to the convention that $\inf\emptyset = +\infty$. However, this restriction needs to be explicit in the pessimistic formulation. So, we define the set
\begin{equation}\label{ch02:eq:domS-preliminaryDef}
    \dom S = \{ x\in \R^p\, :\, S(x)\neq \emptyset \}.
\end{equation}
As we will see in Chapter \ref{Chapter03:Existence}, this notation is given by the notion of domain of set-valued mappings.
\end{note}
\begin{definition}[Pessimistic Bilevel Programming problem] \label{ch02:def:BilevelProgramming-Pessimistic}
    Under the same setting as in Definition \ref{ch02:def:BilevelProgramming-IllPosed}, and defining $S(x) = \argmin_y\{ \theta_f(x,y)\, :\, y\in K(x)  \}$, the pessimistic formulation is given by
    \begin{equation}\label{ch02:eq:BilevelProgramming-Pessimistic}
        \begin{array}{cl}
           \displaystyle\min_{x\in\R^p, y \in \R^q}  & \sup_y\{\theta_l(x,y)\, :\, y\in S(x)\}  \\
            s.t. & x\in X\cap \dom S.
        \end{array}
    \end{equation}
By defining  the function $\varphi^p:x\in X\mapsto \sup\{ \theta_l(x,y)\ :\ y\in S(x)\}$, we can also rewrite the optimistic formulation as
\begin{equation}\label{ch02:eq:BilevelProgramming-Pessimistic-phip}
        \begin{array}{cl}
            \displaystyle\min_{x\in \R^p}   & \varphi^p(x)\\
            \text{s.t.} & x\in X\cap \dom S.
        \end{array}
    \end{equation}
In both formulations, a decision $x\in \R^p$ is feasible if and only if $x\in X$ and $S(x)\neq\emptyset$.    
\end{definition}

As wee will see through this lecture notes, the pessimistic approach is further more challenging that its optimistic counterpart. A first glance of this arises already in Example \ref{ch02:example:MultipleSolutionsFollower}. Indeed, one can easily see that for the optimistic formulation, the optimal solution is given by $x^{\ast}=0$, while the pessimistic formulation does not admit exact solutions. This is illustrated in Figure \ref{ch02:fig:example-MultipleSolutionsFollower-phiO-and-phiP}.
\begin{figure}[ht]
    \centering
    \includegraphics[width=0.9\linewidth]{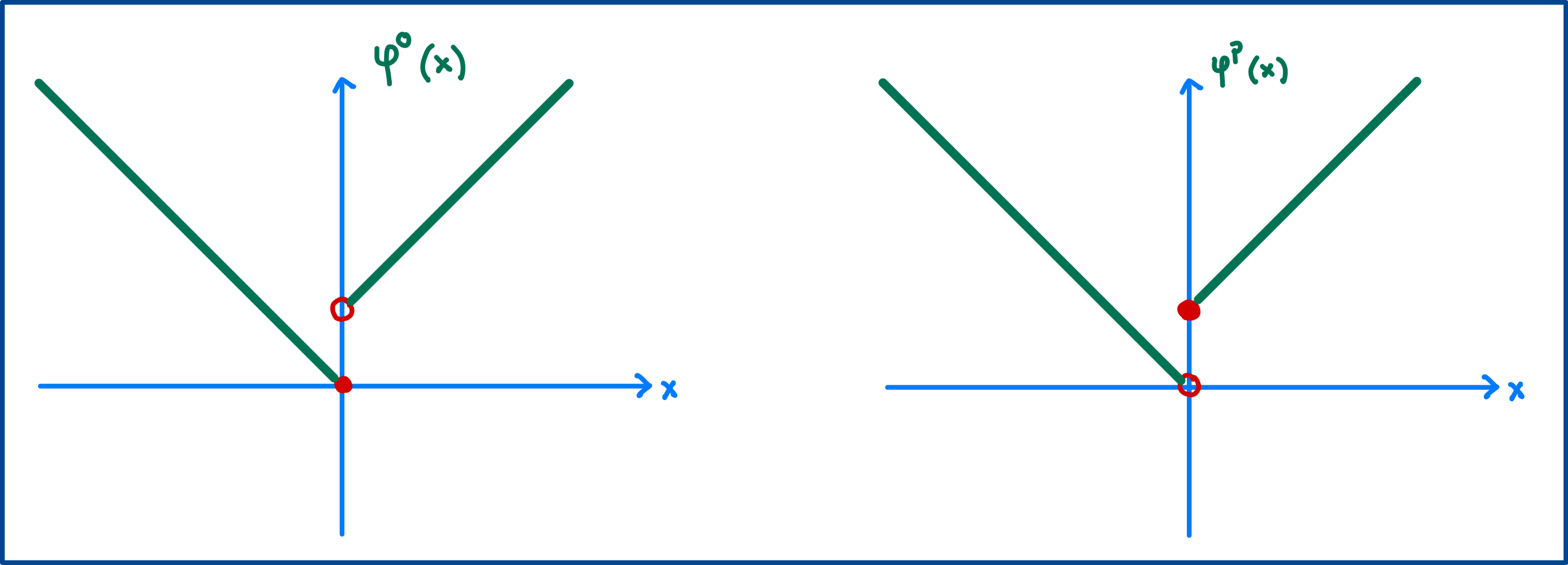}
    \caption{Function $\varphi^o$ (right) and $\varphi^p$ (left) for the problem of Example \ref{ch02:example:MultipleSolutionsFollower}. }
    \label{ch02:fig:example-MultipleSolutionsFollower-phiO-and-phiP}
\end{figure}

%%%%%%%%%%%%%%%%%%%%%%%%%%%%%%%%%%%%%%%%%%%%%
%%%%%%%%%%%%%%%%%%%%%%%%%%%%%%%%%%%%%%%%%%%%%
%%%%%%%%%%%%%%%%%%%%%%%%%%%%%%%%%%%%%%%%%%%%%
%%%%%%%%%%%%%%%%%%%%%%%%%%%%%%%%%%%%%%%%%%%%%

\section{Linear bilevel problems}
\label{ch02:sec:Linear}

A fundamental setting for bilevel programming are the \emph{linear bilevel problems}. This family of problems are those for which the data is fully linear, that is:
\begin{enumerate}
    \item $\theta_l(x,y) = c_l^{\top}x + d_l^{\top}y$ with $c_l\in \R^p$ and $d_l\in\R^q$.
    \item $X = \{x\in\R^p\, :\, A_lx \leq b_l \}$, for some $A_l\in\R^{m_l\times p}$ and $b_l\in \R^{m_l}$.
    \item $\theta_f(x,y) = c_f^{\top}y$ with $c_f\in \R^q$.
    \item $K(x) = \{y\in\R^q\, :\, A_fx + B_fy \leq b_f \}$, for some $A_f\in\R^{m_f\times p}$, $A_f\in\R^{m_f\times q}$ and $b_f\in \R^{m_f}$.
\end{enumerate}
With these considerations, one can have both, optimistic and pessimistic formulations fully determined.
\begin{formulation}{Optimistic Linear Bilevel programming}
    For vectors $c_l,d_l, c_f, b_l, b_f$ and matrices $A_l, A_f$ and $B_f$ as above, the optimistic linear bilevel problem is given by
    \begin{equation}\label{ch02:eq:BLP-Optimistic}
        \begin{array}{rl}
           \displaystyle\min_{x\in\R^p, y \in \R^q}  & c_l^{\top}x + d_l^{\top}y  \\
            s.t. & \left\{\begin{array}{l}
                 A_lx\leq b_l,  \\
                 y \text{ solves } \left\{\begin{array}{rl}
           \displaystyle\min_{y\in\R^q}  & c_f^{\top}y \\
            s.t. & A_fx + B_fy \leq b_f.
        \end{array}\right.
            \end{array}\right.
        \end{array}
    \end{equation}    
\end{formulation}

\begin{formulation}{Pessimistic Linear Bilevel programming}
    For vectors $c_l,d_l, c_f, b_l, b_f$ and matrices $A_l, A_f$ and $B_f$ as above, the pessimistic linear bilevel problem is given by
    
    \begin{equation}\label{ch02:eq:BLP-Pessimistic}
        \begin{array}{cl}
           \displaystyle\min_{x\in\R^p, y \in \R^q}  & c_l^{\top}x + \sup_y\{d_l^{\top}y\, :\, y\in S(x)\}  \\
            s.t. & A_lx\leq b_l,\, x \in \dom S,
        \end{array}
    \end{equation}  
    
where $S(x) = \argmin_y\{c_f^{\top}y\, :\, A_fx + B_fy \leq b_f \}$.    
\end{formulation}

Through these notes, we will study these two problems in depth. Linear bilevel programming is not only as an example of the theory, but it is very relevant on its own. We will optain further results and algorithms by leveraging on the linear structure we just presented.

\begin{warning}
    It very important for a linear bilevel problem that the objective function of the follower to be \textbf{independent of the leader's decision}. 
    Namely, a function of the form $y\mapsto \alpha(x)^{\top}y$ might be considered linear in terms of $y$, but it does not induced a valid follower's linear objective function unless $\alpha(\cdot)$ is constant.
\end{warning}

\begin{example}\label{ch02:example:NonlinearThatLooksLinear}Consider the problem of example \ref{ch02:example:MultipleSolutionsFollower}, that is,
\begin{equation*}
        \begin{array}{rl}
           \displaystyle\min_{x\in\R}  & |x|+y  \\
            s.t. &
                 y \text{ solves } \left\{\begin{array}{rl}
           \displaystyle\min_{y\in\R}  & -xy  \\
            s.t. & 0\leq y \leq 1.
        \end{array}\right.
            
        \end{array}
    \end{equation*}
It is possible to reformulate the problem so it looks linear by means of \textit{epigraphical reformulation}:  
\begin{equation*}
        \begin{array}{rl}
           \displaystyle\min_{x,t\in\R}  & t+y  \\
            s.t. & \begin{cases}t-x\geq 0, t+x\geq 0,\\
                 y \text{ solves } \left\{\begin{array}{rl}
           \displaystyle\min_{y\in\R}  & -xy  \\
            s.t. & 0\leq y \leq 1.
        \end{array}\right.
        \end{cases}  
        \end{array}
    \end{equation*}
This second formulation looks linear: the leader's objective function is linear, all constraints are linear, and the objective function of the follower is linear with respect to the follower's variable. However, as we saw in Figure \ref{ch02:fig:example-MultipleSolutionsFollower-phiO-and-phiP}, the pessimistic approach of this problem does not admit solutions. As we will see later on, (fully) linear bilevel problems always admit solutions. 
\end{example}

%%%%%%%%%%%%%%%%%%%%%%%%%%%%%%%%%%%%%%%%%%%%%
%%%%%%%%%%%%%%%%%%%%%%%%%%%%%%%%%%%%%%%%%%%%%
%%%%%%%%%%%%%%%%%%%%%%%%%%%%%%%%%%%%%%%%%%%%%
%%%%%%%%%%%%%%%%%%%%%%%%%%%%%%%%%%%%%%%%%%%%%
\section{Some Applications}
\label{ch02:sec:Applications}

To finish this chapter, we will visit three examples of applied bilevel programming problems, classic in the literature. As a disclaimer, the number of applications is enormous and even for the problems presented here, there is a vast literature around them. Thus, this last section is nothing more than an illustration of what we can do with this framework.

\subsection{Toll installation problem}

Here, we present a simplified version of the multicommodity problem studied in the seminal work \citep*{Brotcorne2001Toll}. A traffic network can be modeled as a directed graph $G=(V,\vec{E})$, where 
\begin{itemize}
    \item the nodes $V=\{v_1,\ldots,v_n\}$ are intersections of streets, and
    \item an edge $(v_i,v_j)\in \vec{E}$ represents the street that goes from $v_i$ to $v_j$ with that traffic direction.
\end{itemize}
For this graph, we mark two special nodes $s,t\in V$, which are the source and the target, and we set a demand $d\in\R_+$ of (fractional) drivers that aim to go from $s$ to $t$ using this network. By supposing that each street $e\in \vec{E}$ has a marginal cost $c_e>0$ of being used, the classic  \textbf{minimal-cost flow problem} is given by
\begin{equation}\label{ch02:eq:NonatomicCongestionGame}
    \begin{array}{cl}
      \displaystyle \min_{x}  &\sum_{e\in \vec{E}} c_ex_e  \\
        s.t. & \left\{\begin{array}{l}
        x\geq 0,\\
        \forall v\in V, \, \displaystyle\sum_{e=(u,v)\in\vec{E}}x_e - \sum_{e=(v,w)\in\vec{E}}x_e  = \begin{cases}
            d\quad&\text{ if }v = s,\\
            0\quad&\text{ if }v \in V\setminus\{s,t\}\\
            -d&\text{ if }v = t.
        \end{cases}
        \end{array}\right.
    \end{array}
\end{equation}
This problem models the equilibrium of a ``volume'' $d$ of homogeneous drivers optimizing their routes from their starting point ($s$) to their destination ($t$). 
\begin{formulation}{Toll installation problem}
    For a traffic network $G=(V,\vec{E})$, a regulator aims decide the toll pricing $p = (p_e\, :\, e\in\vec{E})$ in order to solve
    \begin{equation}\label{ch02:eq:Toll-problem}
        \begin{array}{cl}
            \displaystyle\min_{p}   & \theta(p,x)\\
            \text{s.t.} & 0\leq p\leq p_{\max},\, x\in S(c+p),
        \end{array}
    \end{equation}
    where, $S(c+p)$ is the solution set of the nonatomic congestion game \eqref{ch02:eq:NonatomicCongestionGame} with augmented marginal costs $(c_e+p_e\, :\, e\in \vec{E})$ instead of $(c_e\, :\, e\in \vec{E})$. 
\end{formulation}
Note that the Toll installation problem here is given in the ill-posed formulation. Thus, one can consider the optimistic or pessimistic approaches depending on the situation that is being modeled.
\subsection{Security games}
Stackelberg Security games (SSG) were introduced in the mid 2000's as model to deal with the allocation of limited security resources (see, e.g., \cite{SinhaEtAl2018SSG}). 

In the most basic model, a defender must protect $K\in\N$ objects against an attacker. The defender can protect only one object at per day, and similarly the attacker can attack only one object (no simultaneous attacks are considered). Each object $k\in [K]$ has a value $a_k$. 

We denote by $\Delta(K)$ the $K$-simplex set. A vector on $\Delta(K)$ represents a probability distribution of choosing an object over $K$. The defender decides a defending policy $p\in \Delta(K)$, and the attacker decides an attacking policy $q\in \Delta(K)$. 

In the day of the attack, we distinguish two outcomes:
\begin{itemize}
    \item The attacker chooses to attack the object $k\in K$ protected by the defender. Then, the payment for the defender is $a_k$ and for the attacker $-a_k$.
    \item The attacker chooses to attack the object $k\in K$ not protected by the defender. Then, the payment for the defender is $-a_k$ and for the attacker $a_k$.
\end{itemize}

We assume that the attacker can observe for many days the defender's policy before choosing the day of the attack. Thus, the game becomes a Stackelberg game were the leader (defender) commits to a defending policy, and the follower (attacker) decides the attacking policy optimally.

\begin{formulation}{Stackelberg Security game}
For a logical proposition $P$, set $\ind_P = 1$ if $P$ is true, and $\ind_P=0$, if $P$ is false. Considering the description above, the formulation is as follows:

  \begin{equation}\label{ch02:eq:SSG-IllPosed}
    \begin{array}{|c|c|}
        \hline
        &\\
        \begin{array}{cl}
            \displaystyle\max_{p}   & \displaystyle\sum_{i\in K}\sum_{j\in K} a_j(\ind_{\{i=j\}} - \ind_{\{i\neq j\}})p_iq_j\\[15pt]
            \text{s.t.} & \begin{cases}
                p\geq 0,\\
                \sum_{i\in K}p_i = 1, q\in S(p).
            \end{cases}
        \end{array}
        &
        \begin{array}{cl}
            \displaystyle\max_{q}   & \displaystyle\sum_{i\in K}\sum_{j\in K} a_j(\ind_{\{i\neq j\}}-\ind_{\{i=j\}})p_iq_j\\[15pt]
            \text{s.t.} & \begin{cases}
                q\geq 0,\\
                \sum_{j\in K}q_j = 1.
            \end{cases}
        \end{array}\\
        
        &\\
        \hline
        \text{Upper-level (Defender)}&\text{Lower-level (Attacker)}\\
        \hline
    \end{array}
\end{equation}
    Since this is a zero-sum game, only the pessimistic formulation is meaningful, which is given by
    \begin{equation}\label{ch02:eq:SSG-Pessimistic}
        \max_{p\in \Delta(K)}\min_{q\in S(p)} \displaystyle\sum_{i\in K}\sum_{j\in K} a_j(\ind_{\{i\neq j\}}-\ind_{\{i=j\}})p_iq_j.
    \end{equation} 
\end{formulation}

Note that Stackelberg Security Games are in fact games. Therefore, the first formulation \eqref{ch02:eq:SSG-IllPosed} is given in the ``game-oriented'' form. 

\subsection{Dispatch problem in electricity markets}

Bilevel programming was firstly successfully applied to electricity Market models in the seminal paper \citep*{Hobbs2000Strategic}. While game theoretical aspects of electricity markets had already been considered, the application of a bilevel model allowed to capture the effects of a very particular agent: the Independent System Operator (ISO). We present here a simplified version of the model of \citep*{HuRalph2007EPEC}.

\begin{note}{\textbf{Notation:} Number sets}
    We consider the following notation:
    \begin{itemize}
        \item We denote by $\R_+$ the set of nonnegative real numbers, that is, $\R_+=[0,+\infty)$.
        \item We consider the $\N$ to be the positive integer numbers, that is, $\N = \{1,2,3,\ldots\}$.
        \item For an integer $N\in \N$, we write $[N] := \{1,\ldots,N\}$.
    \end{itemize}
    
\end{note}

We consider $N$ power producers that, at a given time, must satisfy a known demand $D$. Each producer $i\in [N]$ has an increasing cost function of power production $c_i:\R_+\to\R_+$ with $c_i(0) = 0$.

A regulator, called the independent system operator (ISO), needs to solve how much production to assign to each producer, in order to minimize the total cost of production and satisfy the demand.

Each producer $i\in [N]$ declares a cost function (that might differ from the real one), by giving parameters $a_i\in [a_i^l,a_i^u]$ and $b_i \in [b_i^l,b_i^u]$ to the ISO. The ISO then solves her problem, by considering the cost of the producer as
\begin{align*}
\tilde{c}_i:\R_+&\to\R\\
q_i &\mapsto b_iq_i^2 + a_iq_i.
\end{align*}
Finally, once the ISO assign a dispatch $q = (q_i\, :\, i\in [N])$, each producer is paid according its declared cost function, that is, $\tilde{c}_i(q_i)$.
\begin{note}{\textbf{Notation:} Game theory indexing}
    For a vector $x\in \R^N$ and an index $i\in [N]$, it is usual to write 
    \[
    x= (x_i,x_{-i}),
    \]
    to emphasize the $i$th coordinate. In this notation:
    \begin{itemize}
        \item $x_{-i}$ stands for the coordinates in $q$ but the $i$th one. That is, $x_{-i}:= (x_j\, :\, j\in [N]\setminus\{i\})$.
        \item the pair $(x_i,x_{-i})$ is an abuse of notation, since it could be interpreted as a reordering of coordinates. However, no order is assumed in the assigment $x=(x_i,x{-i})$.
    \end{itemize}
    In game theory, this notation is useful to separate on each agent's problem the effects due to their own decisions, $x_i$, and the effects due the decisions of the other ones, $x_{-i}$. 
\end{note}

Once every producer has submit her cost function's parameters $(a_i,b_i)$, the ISO solves the problem
\begin{equation}\label{ch02:eq:ISO-problem}
   \mathrm{ISO}(a,b)= \left\{\begin{array}{cl}
         \displaystyle\min_{q}    & \displaystyle\sum_{j=1}^N  b_jq_j^2+ a_{j}q_j\\[15pt]
             s.t. &  \begin{cases}
             \sum_{j=1}^N q_j\geq D,\\
             q\geq 0.
             \end{cases}
        \end{array}\right.
\end{equation}
The dispatch $q\in \R_+^N$ is the production assigment for each producer.
\begin{formulation}{Uninodal model for electricity market}
    With all the considerations above, the problem of each producer $i\in [N]$ is given by
    \begin{equation}\label{ch02:eq:UninodalSPOT}
    P_i(a_{-i},b_{-i}) = \left\{\begin{array}{cl}
      \displaystyle \min_{a_i,b_i}  & b_iq_i^2+ a_{i}q_i - c_i(q_i) \\
        s.t. & \left\{\begin{array}{l}
        a_i \in [a_i^l,a_i^u],\,\, b_i \in [b_i^l,b_i^u],\\[10pt]
        q =(q_i,q_{-i})\text{ solves }\left\{\begin{array}{cl}
         \displaystyle\min_{q}    & \displaystyle\sum_{j=1}^N  b_jq_j^2+ a_{j}q_j\\[15pt]
             s.t. &  \begin{cases}
             \sum_{j=1}^N q_j\geq D,\\
             q\geq 0.
             \end{cases}
        \end{array}\right.
        \end{array}\right.
    \end{array}\right.
\end{equation}
\end{formulation}

For a given decision vector $(a_{-i},b_{-i})$, the $i$th producer is playing a Stackelberg game with the ISO. Since the actions of producers affect each other (the ISO takes all the cost functions into account to decide the dispatch), producers are playing Nash game. These kind of models are known as Multi-Leader-Single-Follower games, and are an extension of Stackelberg games (one leader and one follower). We will revisit them in Chapter \ref{Chapter05:Extensions}.

\newpage
\section{Problems}\label{ch02:sec:problems}

\begin{problem}[bid b-only electricity market] Consider the uninodal model for electricity market described in \eqref{ch02:eq:UninodalSPOT}, with the following parameters:
\begin{itemize}
    \item $N= 2$, that is, there is only two producers.
    \item For each producer $i=1,2$, $c_i(q_i) = Bq_i^2$ for some shared constant $B>0$.
    \item Both players only bid the $b_i$ parameter, that is, they always set $a_i = 0$.
    \item Suppose that the second producer is honest, that is, she bids $b_2 = B$. 
\end{itemize}
With this considerations, we want to study the induced problem for the first producer.
\begin{enumerate}
    \item Write the bilevel problem of the first producer (considering the ISO as the follower). That is, write problem $P_1(b_2)$ as in \eqref{ch02:eq:UninodalSPOT}, with $b_2=B$.
    \item For each $b_1\in [b_1^l,b_1^u]$, write the (unique) solution of the ISO problem (assume that $b_1^l>0$).
    \item Replacing $q_1$ by $q_1(b_1)$ given by (first component) of the solution of the ISO, identify all critical points for the problem of the first producer.
    \item Determine the optimal solution $b_1^*$ for the case of $B=1$, $b_1^l = 0.4$ and $b_1^u = 1.5$.
\end{enumerate}
    
\end{problem}
\chapter{Existence of Solutions}
\label{Chapter03:Existence}

In this chapter, our main goal is to provide sufficient conditions for existence of solutions of bilevel programming problems. We will do so by studying the continuity properties of the best response set-valued map $x\mapsto S(x)$.

\section{Preliminaries on Set-Valued Analysis}\label{ch03:sec:Set-Valued}
%We start with the definition of set-valued map and some examples.
\begin{definition}[Set-Valued map]\label{ch03:def:Set-ValuedMap}
Let $X$ and $Y$ be two nonsempty sets. A \textbf{set-valued map} (also known as \textbf{multifunction} or \textbf{correspondence}) $M$ from $X$ to $Y$, which we denote by $M:X\tto Y$, is an assignment that to each element $x\in X$, it assigns a set $M(x)\subset Y$. 
\end{definition}

\begin{example}\label{ch03:ex:SetValuedMaps}Some examples of set-valued maps are:
\begin{enumerate}
    \item $M:\R\tto \R$, given by $M(x) = \{y \,:\, y^2 = x\}$. This set-valued map gives the solutions the polynomial equation $y^2 = x$. It takes empty values for every $x<0$. See Figure~\ref{ch03:fig:Examples-setvalued-maps}.
    \item For a function $f:X\to \R$, we can consider the set-valued map $F:X\tto \R$ given by $F(x) = \{r~\,:~\,~r\geq f(x)\}$. See Figure~\ref{ch03:fig:Examples-setvalued-maps}.
    \item For a function $g:X\times Y\to \R^m$, we can consider the set-valued map of sublevel sets $L:X\tto Y$ given by $L(x) = \{y\,:\, g(x,y)\leq 0\}$, where the inequality $f(x,y)\leq 0$ is meant to be componentwise.
\end{enumerate}

\begin{figure}[ht]
    \centering
    \includegraphics[width=0.9\linewidth]{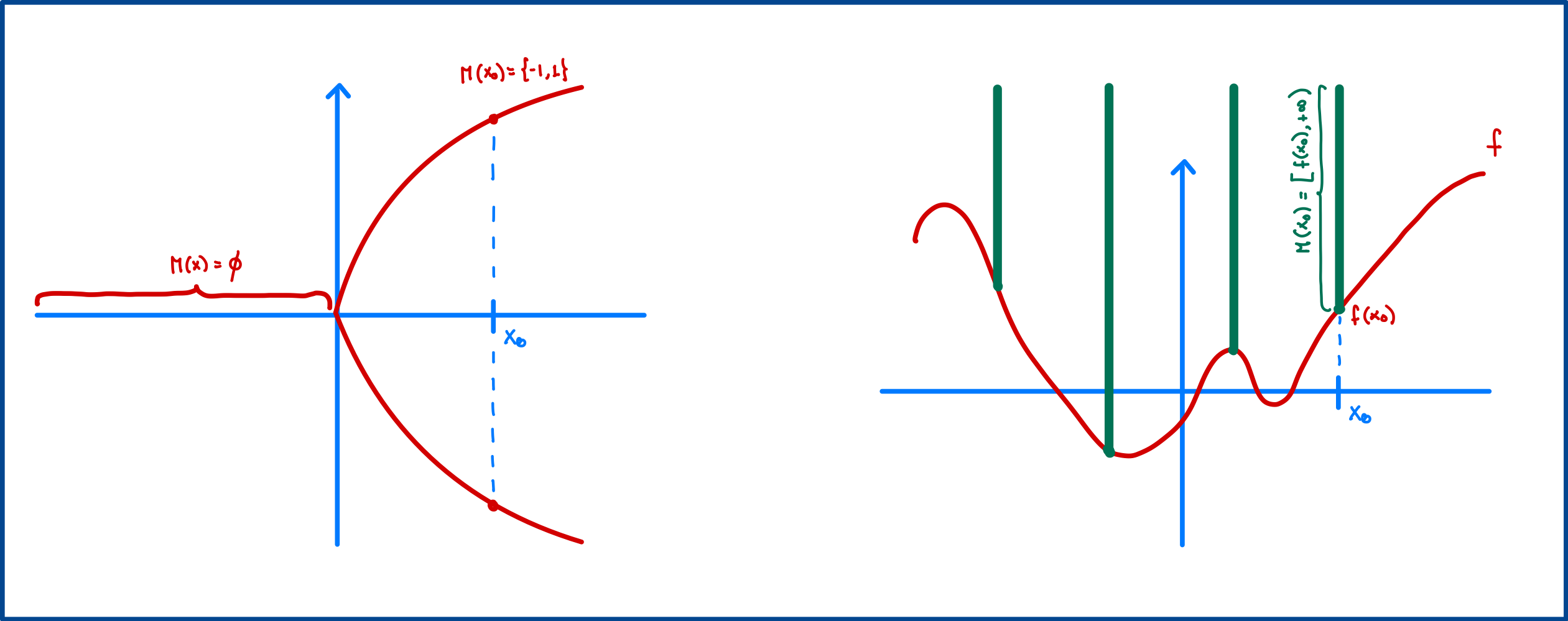}
    \caption{Illustration of Examples \ref{ch03:ex:SetValuedMaps}.1 (left) and \ref{ch03:ex:SetValuedMaps}.2 (right).}
    \label{ch03:fig:Examples-setvalued-maps}
\end{figure}

The most pertinent example in our setting is the solution set of a parametric optimization problem. That is, considering the problem
\[
\min_{y\in Y} \{ f(x,y)\,:\, g(x,y)\leq 0 \},
\]
with $f:X\times Y\to \R$, and $g:X\times Y\to \R^m$. Then, we can define the set-valued map $S:X\tto Y$ where $S(x)$ is the set of minimizers of the optimization problem parametrized by $x$. That is, $S(x) := \argmin_y\{ f(x,y)\,:\, g(x,y)\leq 0\}$.
\end{example}

\begin{warning}
    One might be tempted to say that a set-valued map $M:X\tto Y$ is just a function between $X$ and the power set $2^Y$. While this might be correct in terms of the correspondence $x\mapsto M(x)$, the properties that we will study from $M$ will be linked to the space $Y$, rather than the space $2^Y$. Thus, it is better to do the distinction. 
\end{warning}

For a set-valued map $M:X\tto Y$ we define:
\begin{enumerate}
    \item The \textbf{domain} of $M$ as 
    \begin{equation}\label{ch03:eq:domainSetValuedMap}
        \dom M = \{ x\in X\,:\, M(x)\neq\emptyset\}.
    \end{equation}
    Note that this definition coincides with the domain of $S$ defined in \eqref{ch02:eq:domS-preliminaryDef}.
    \item The \textbf{graph} of $M$ as
    \begin{equation}\label{ch03:eq:gphSetValuedMap}
        \gph M = \{ (x,y)\in X\times Y\,:\, y\in M(x)\}.
    \end{equation}
    \item For $A\subset X$, the \textbf{image} of $A$ through $M$ is given by
    \(    
        M(A) = \bigcup\{ M(x)\,:\, x\in A\}.
    \)
    The \textbf{range} of $M$ is simply given by $\mathrm{Range}(M) := M(X)$.
    \item For $B\subset Y$, the \textbf{preimage} of $B$ through $M$ is given by
 \(
 M^{-1}(B) = \{x\in X\ :\ M(x)\cap B\neq\emptyset\}.
\)
    With this definition, $\dom M = M^{-1}(Y)$.
    \item The \textbf{inverse} set-valued map $M^{-1}:Y\tto X$, which is given by
    \begin{equation}
        M^{-1}(y) = \{ x\in X\, :\, y\in M(x) \}.
    \end{equation}
    Note that with this definition, for a set $B\subset Y$, $M^{-1}(B)$ can denote either the preimage of $B$ through $M$, or the image of $B$ through $M^{-1}$. Moreover, $\mathrm{Range}(M^{-1}) = \dom M$. 
\end{enumerate}

\begin{note}{\textbf{Remark:} Set-valued maps coincide with sets in the product space}
    Note that any set-valued map can be identified with its graph by considering the identification
    \begin{equation}
        M(x) = \{ y\in Y\, :\, (x,y) \in \gph M \}.
    \end{equation}
    Basicaly, the images of $M$ can be identified as \textbf{slices} on $\gph M$. Moreover, any set $K\subset X\times Y$ can be seen as the graph of a set-valued map, just by replacing $\gph M$ by $K$ in the formula above. Thus, set-valued maps from $X$ to $Y$ and subsets of $X\times Y$ are in a one-to-one correspondence. This fact is illustrated in Figure~\ref{ch03:fig:SetValued-Graph}
\end{note}
\begin{figure}[ht]
    \centering
    \includegraphics[width=0.7\linewidth]{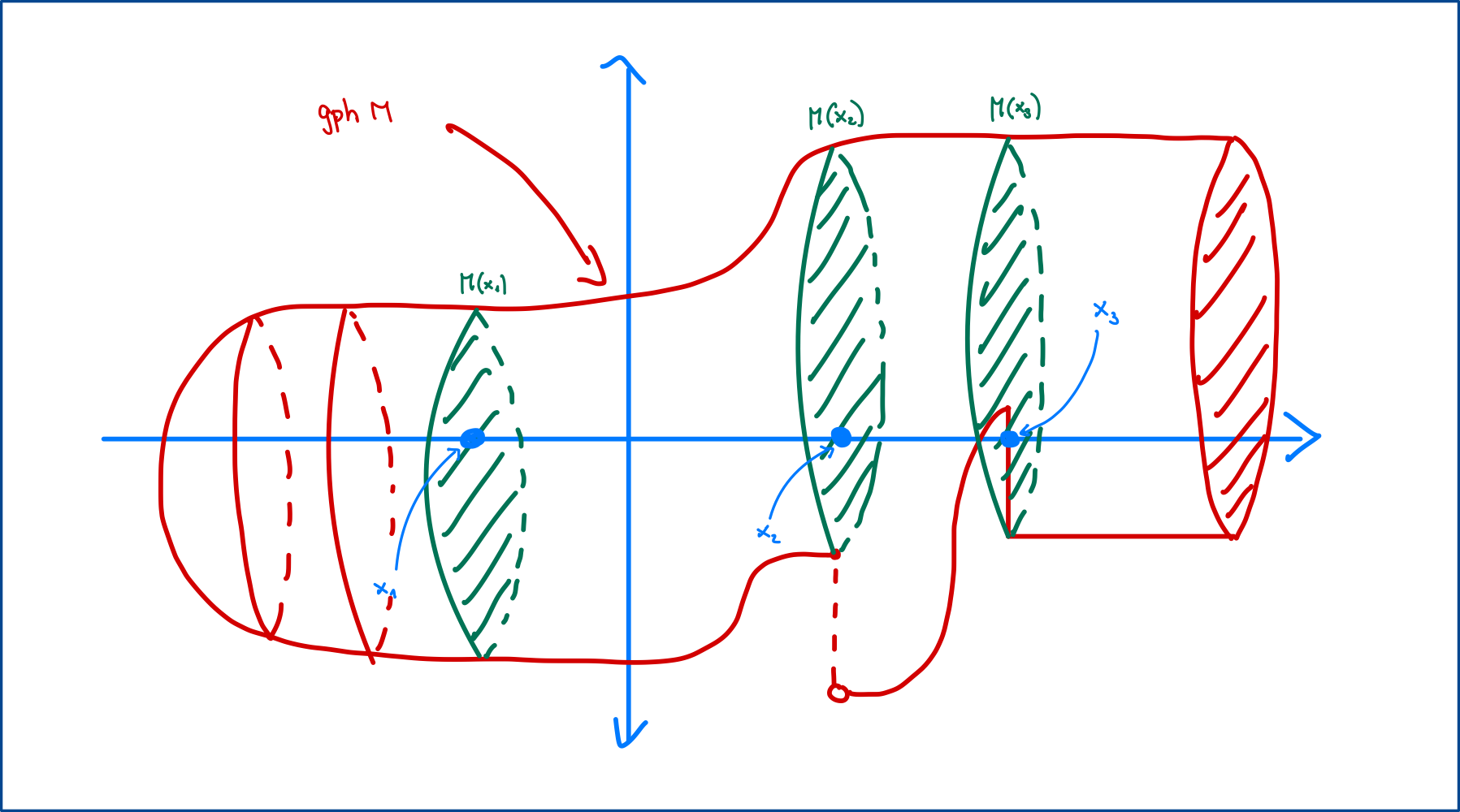}
    \caption{A set-valued map identified as the slices of its graph.}
    \label{ch03:fig:SetValued-Graph}
\end{figure}
\subsection{Upper and Lower semicontinuity}

Let us now suppose that $X$ and $Y$ are metric spaces, with metrics $d_X$ and $d_Y$ respectively. Our aim is to study the notions of continuity of a set-valued map $M:X\tto Y$. The approach here is to consider extensions to set-valued maps for the \textit{neighborhood-based} definition of continuity.

\begin{note}{Notation: Neighborhoods}
    For a metric space $(X,d_X)$ and a point $x_0\in X$. We denote the open ball of radius $\delta>0$ and centered at $x_0$ by $B_{d_X}(x_0,\delta)$. We denote the family of all neighborhoods of $x_0$ as $\mathcal{N}_{d_X}(x_0)$, that is,
    \[
    \mathcal{N}_{d_X}(x_0) = \{ V\subset X\ :\ \exists \delta>0, B_{d_X}(x_0,\delta)\subset V\}.
    \]
    If there is no ambiguity about the metric, we might simply write $B_X(x_0,\delta)$ and $\mathcal{N}_{X}(x_0)$, or even $B(x_0,\delta)$ and $\mathcal{N}(x_0)$.
\end{note}

Recall first that a function $f:X\to Y$ is continuous at $x_0\in X$ if
\begin{equation}\label{ch03:eq:def-continuity}
\forall V\in\mathcal{N}_{Y}(f(x_0)),\,\exists U\in \mathcal{N}_X(x_0)\text{ such that }\forall x\in U,\, f(x)\in V.
\end{equation}
A first idea to generalize continuity to a set-valued map $M$ is to ask the same behavior that we have for the images of $f$ to the images of $M$. The inclusions point-to-set need to be replaced by inclusions of sets. That is, to replace $f(x_0)\in V$ and $f(x)\in V$ by $M(x_0)\subset V$ and $M(x)\subset V$, respectively, and consider $V$ to be directly an open set. This leads to the notion of upper semicontinuity.
\begin{definition}[Upper semicontinuity]\label{ch03:def:UpperSemicontinuity} Let $X$ and $Y$ be metric spaces, and let $x_0\in X$. We say that $M$ is \textbf{upper semicontinuous} at $x_0$ if 
\[
\forall V\subset Y\text{ open with }M(x_0)\subset V,\,\exists U\in\mathcal{N}_{X}(x_0),\text{ such that }\forall x\in U,\, M(x)\subset V.
\] 
We simply say that $M$ is upper semicontinuous (respectively, on a set $K\subset X$) if it is so at every point $x\in X$ (respectively, 
at every point $x\in K$).
\end{definition}

A second idea to generalize continuity to a set-valued map $M$ is to study the continuity at $x_0$ by looking at the continuity of each point $y_0\in M(x_0)$. One can think on selecting points $y\in M(x)$ such that the selection, as a function, is continuous at $y_0$. In other words, whenever we choose a neighborhood $V\in\mathcal{N}_Y(y_0)$, there should be a neighborhood $U\in\mathcal{N}_X(x_0)$ such that for every $x\in U$ ($x$ close enough to $x_0$) we can select $y\in M(x)$ with $y\in V$ (select $y$ close enough to $y_0$). This leads to the notion of lower semicontinuity.

\begin{definition}[Lower semicontinuity]\label{ch03:def:LowerSemicontinuity} Let $X$ and $Y$ be metric spaces, and let $x_0\in X$. We say that $M$ is \textbf{lower semicontinuous} at $x_0$ if 
\[
\forall V\subset Y\text{ open with }M(x_0)\cap V\neq\emptyset,\,\exists U\in\mathcal{N}_{X}(x_0),\text{ such that }\forall x\in U,\, M(x)\cap V\neq\emptyset.
\] 
We simply say that $M$ is lower semicontinuous (respectively, on a set $K\subset X$) if it is so at every point $x\in X$ (respectively, 
at every point $x\in K$).
\end{definition}

Note that both definitions coincide with the classic continuity for single-valued maps. That is, for any function $f:X\to Y$ we can consider the set-valued map $F:X\tto Y$ given by $F(x) = \{f(x)\}$. In such a case, upper semicontinuity of $F$ is equivalent to lower semicontinuity of $F$, and both are equivalent to the continuity of $f$. However, in general, both notions might differ.

\begin{example}[Gap between Upper and Lower semicontinuity]\label{ch03:ex:UpperVsLower} let $X=Y=[0,1]$ and consider the set-valued maps $F_1,F_2:[0,1]\tto[0,1]$ given by
\[
F_1(t) = \begin{cases}
    \{0\}&t<1,\\
    [0,1]&t=1,
\end{cases}\quad\text{ and }\quad F_2(t) = \begin{cases}
    [0,1]&t<1,\\
    \{0\}&t=1.
\end{cases}
\]
\begin{figure}[ht]
    \centering
    \includegraphics[width=0.7\linewidth]{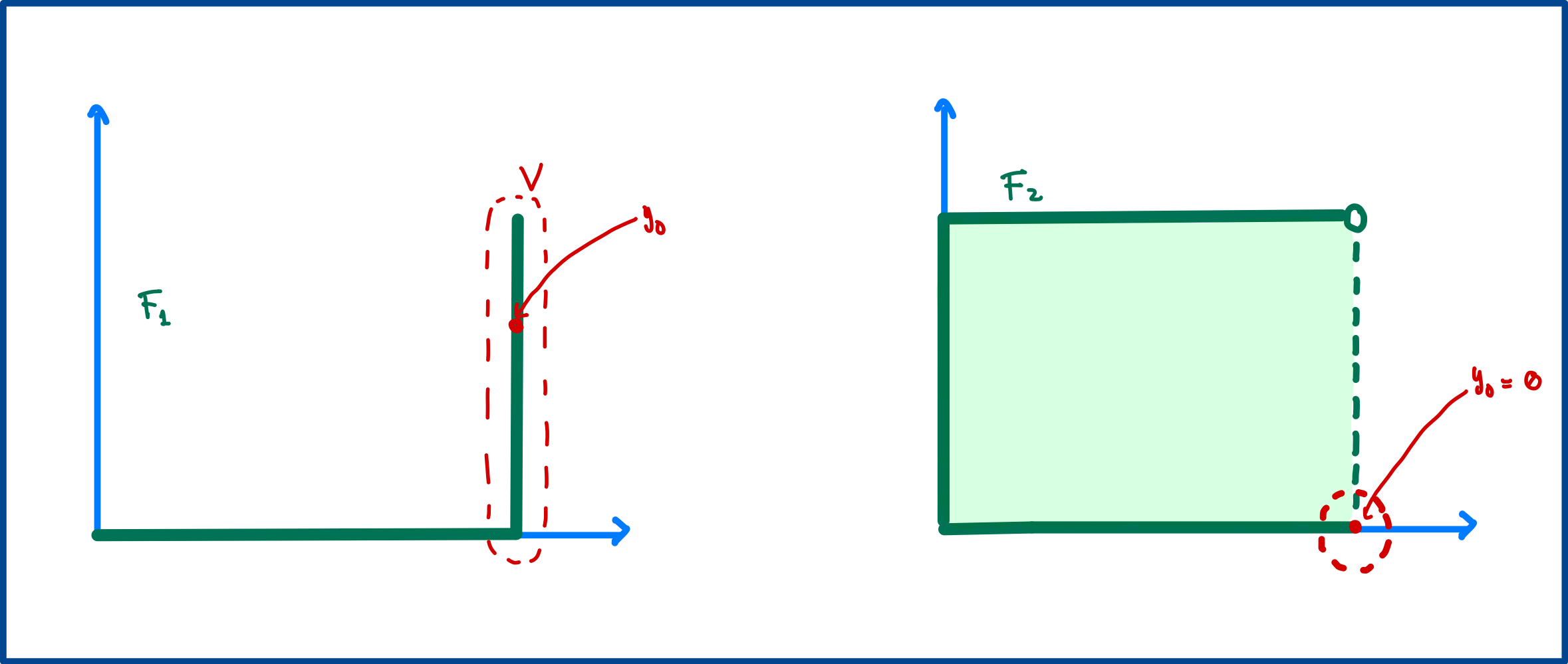}
    \caption{Illustration of gap between Upper and Lower semicontinuity.}
    \label{ch03:fig:UpperVsLower}
\end{figure}

Then, $F_1$ is upper semicontinuous at but not lower semicontinuous at $t=1$, and $F_2$ is lower semicontinuous but not upper semicontinuous at $t=1$.  See Figure \ref{ch03:fig:UpperVsLower}.  
\end{example}

\begin{definition}[Continuity (in Berge's sense)] Let $X$ and $Y$ be topological spaces, and let $x_0\in X$. We say that $M$ is \textbf{continuous} at $x_0$ if it is both, upper and lower semicontinuous at $x_0$. We simply say that $M$ is continuous (respectively, on a set $K\subset X$) if it is so at every point $x\in X$ (respectively, 
at every point $x\in K$).   
\end{definition}

\subsection{Characterizations of upper and lower semicontinuity}

Let us now study some characterizations of upper and lower semicontinuity of set-valued maps.

\begin{proposition} Let $X,Y$ be two topological spaces and $M:X\tto Y$ be a set-valued map. Then,
\begin{enumerate}
    \item $M$ is upper semicontinuous $\iff$ the premiage through $M$ of closed sets of $Y$ are closed on $X$.
    \item $M$ is lower semicontinuous $\iff$ the premiage through $M$ of open sets of $Y$ are open on $X$.
\end{enumerate}
\end{proposition}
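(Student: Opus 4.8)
The plan is to prove the two characterizations separately, and to reduce the second one to the first by exploiting the duality between preimages of a set and its complement. The starting observation is the elementary set-theoretic identity: for any $B \subset Y$, a point $x$ lies in the preimage $M^{-1}(B) = \{x : M(x) \cap B \neq \emptyset\}$ if and only if $M(x) \not\subset Y \setminus B$. Equivalently, $X \setminus M^{-1}(B) = \{x : M(x) \subset Y \setminus B\}$. This identity is what links the ``$\cap V \neq \emptyset$'' condition in lower semicontinuity to the ``$\subset V$'' condition in upper semicontinuity.

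For part 1, I would argue as follows. ($\Rightarrow$) Assume $M$ is upper semicontinuous and let $C \subset Y$ be closed; set $V = Y \setminus C$, which is open. I claim $M^{-1}(C)$ is closed, i.e.\ $X \setminus M^{-1}(C) = \{x : M(x) \subset V\}$ is open. Take any $x_0$ with $M(x_0) \subset V$; by upper semicontinuity at $x_0$ there is $U \in \mathcal{N}_X(x_0)$ with $M(x) \subset V$ for all $x \in U$, hence $U \subset \{x : M(x) \subset V\}$, which shows this set is a neighborhood of each of its points, hence open. ($\Leftarrow$) Conversely, fix $x_0$ and an open $V \supset M(x_0)$. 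Then $C = Y \setminus V$ is closed, so $M^{-1}(C)$ is closed by hypothesis, and $x_0 \notin M^{-1}(C)$ since $M(x_0) \cap C = \emptyset$. Thus $U := X \setminus M^{-1}(C)$ is an open neighborhood of $x_0$, and for every $x \in U$ we have $M(x) \cap C = \emptyset$, i.e.\ $M(x) \subset V$. This is exactly upper semicontinuity at $x_0$.

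For part 2, I would mirror the argument with ``open'' and ``closed'' swapped. ($\Rightarrow$) Assume $M$ lower semicontinuous and let $V \subset Y$ be open; to show $M^{-1}(V)$ is open, take $x_0 \in M^{-1}(V)$, so $M(x_0) \cap V \neq \emptyset$; lower semicontinuity gives $U \in \mathcal{N}_X(x_0)$ with $M(x) \cap V \neq \emptyset$ for all $x \in U$, hence $U \subset M^{-1}(V)$. ($\Leftarrow$) Conversely, fix $x_0$ and open $V$ with $M(x_0) \cap V \neq \emptyset$; then $M^{-1}(V)$ is open by hypothesis and contains $x_0$, so $U := M^{-1}(V)$ works. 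Alternatively, one can deduce part 2 directly from part 1 via the complement identity above, taking closed sets $C = Y \setminus V$ throughout; I would mention this but present the direct argument as the main line since it is equally short.

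\textbf{Main obstacle.} There is no serious obstacle here: the statement is a routine topological translation and the only thing to get right is the bookkeeping of which quantifier (``$\subset$'' versus ``$\cap \neq \emptyset$'') pairs with ``open'' versus ``closed,'' together with being careful that $U \in \mathcal{N}_X(x_0)$ in the definitions is a neighborhood (which may be taken open without loss of generality, since every neighborhood contains an open one). The one point worth stating explicitly is the complement identity $X \setminus M^{-1}(B) = \{x : M(x) \subset Y \setminus B\}$, as everything else follows from unwinding definitions.
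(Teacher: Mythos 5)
Your proof is correct and follows essentially the same route as the paper's: both directions of each equivalence are obtained by passing to complements and unwinding the definitions, with the identity $X \setminus M^{-1}(B) = \{x : M(x) \subset Y \setminus B\}$ (which the paper uses implicitly) doing all the work. Making that identity explicit is a nice touch, but the argument is otherwise the same.
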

\begin{proof} Let us show the first equivalence:

\paragraph{($\implies$):} Let $K\subset Y$ be a closed set. let us show that $A = X\setminus M^{-1}(K)$ is open. Indeed, choose $x\in A$. By construction, $M(x)\subset V:=Y\setminus K$, which is an open set of $Y$. Then, there exists $U\in \mathcal{N}_X(x)$ such that $M(U)\subset V$. In particular, $M(U)\cap K = \emptyset$ and so $U\subset X\setminus M^{-1}(K) = A$. Since $x\in A$ is arbitrary, we deduce that $A$ is open.
\paragraph{($\impliedby$):} Let $x_0\in X$ and an open set $V\subset Y$ with $M(x_0)\subset V$. Consider the closed set $K=Y\setminus V$. Then, $M^{-1}(K)$ is closed and so $U = X\setminus M^{-1}(K)$ is open. By construction, $x_0\in U$ and $M(U)\subset V$. From the arbitrariness of $x_0$ and $V$, we deduce that $M$ is upper semicontinuous.\\

\noindent Let us show the second equivalence:

\paragraph{($\implies$):}Let $V$ be an open set of $Y$, and choose $x_0\in M^{-1}(V)$. Then, $M(x_0)\cap V$ is nonempty and so, since $M$ is lower semicontinuous at $x_0$, there exists $U\in \mathcal{N}_X(x_0)$ such that $M(x)\cap V\neq \emptyset$ for every $x\in U$. This yields that $U\subset M^{-1}(V)$. Since this holds for every $x_0\in M^{-1}(V)$, we deduce that $M^{-1}(V)$ is open.
\paragraph{($\impliedby$):} Let $x_0\in X$ and let $V\subset Y$ be an open set such that $V\cap M(x_0)\neq \emptyset$. Then, $U=M^{-1}(V)$ is open, and so $U\in\mathcal{N}_X(x_0)$. By construction, $M(x)\cap V\neq \emptyset$ for every $x\in U$. From the arbitrariness of $x_0$ and $V$, we deduce that $M$ is lower semicontinuous.

\end{proof}

While the last proposition and the definitions tell us that it is possible to deepen the study of continuity in general topological spaces, our interest is restricted to metric spaces. Since $X$ and $Y$ are metric spaces, it is possible to derive sequential characterizations of both, upper and lower semicontinuity.

\begin{note}{Note: Semilimits of functions} Let $f:X\to \R$ be a real-valued function over a metric space $X$, and let $x_0\in X$. The inferior and superior semilimits of $f$ at $x_0$ are given by
\begin{align}
    \liminf_{x\to x_0} f(x) &:= \sup_{U\in \mathcal{N}(x_0)} \inf_{x\in U} f(x) = \sup_{n\in\N} \inf_{x\in B(x_0,1/n)} f(x).\label{ch03:eq:Def-liminf}\\
    \limsup_{x\to x_0} f(x) &:= \inf_{U\in \mathcal{N}(x_0)} \sup_{x\in U} f(x) = \inf_{n\in\N} \sup_{x\in B(x_0,1/n)} f(x).\label{ch03:eq:Def-limsup}
\end{align}
Semilimits are linked to semicontinuity of functions. Namely,
\begin{enumerate}
    \item $f$ is upper semicontinuous at $x_0$ if $f(x_0) \geq \displaystyle\limsup_{x\to x_0}f(x)$.
    \item $f$ is lower semicontinuous at $x_0$ if $f(x_0) \leq \displaystyle\liminf_{x\to x_0}f(x)$.
\end{enumerate}
Of course, a function $f$ is continuous at $x_0$ if and only if it is both upper and lower semicontinuous at $x_0$.

Recall also that for a sequence $(r_n)\subset \R$, one can define the semilimits of $(r_n)$ as
\begin{align}
    \liminf_{n\to\infty} r_n &:= \sup_{n\in\N} \inf_{k\geq n} r_k.\label{ch03:eq:Def-liminf-seq}\\
    \limsup_{n\to\infty} r_n &:= \inf_{n\in\N} \sup_{k\geq n} r_k.\label{ch03:eq:Def-limsup-seq}
\end{align}
Then, semicontinuity of real-valued functions over metric spaces can be characterized sequentially as follows:
\begin{enumerate}
    \item $f$ is upper semicontinuous at $x_0$ if and only if for every sequence $x_n\to x_0$, one has that $f(x_0)~\geq~\displaystyle\limsup_{n\to\infty}f(x_n)$.
    \item $f$ is lower semicontinuous at $x_0$ if and only if for every sequence $x_n\to x_0$, one has that $f(x_0)~\leq~\displaystyle\liminf_{n\to\infty}f(x_n)$.
\end{enumerate}
\end{note}

\begin{theorem}[Sequential characterization of lower semicontinuity]\label{ch03:thm:seq-LSC} Let $X$ and $Y$ be two metric spaces, and let $x_0\in X$. For a set-valued map $M:X\tto Y$, the following assertions are equivalent:
\begin{enumerate}
    \item[(i)] $M$ is lower semicontinuous  at $x_0$.
    \item[(ii)] For every sequence $(x_n)\subset X$ converging to $x_0$ and every $y_0\in M(x_0)$, there exists a sequence $(y_n)\subset Y$ and $n_0\in \N$ such that $y_n\to y_0$ and $y_n\in M(x_n)$ for every $n\geq n_0$.
    \item[(iii)] $M(x_0) \subset \{ y\in Y\, :\, \limsup_{x\to x_0} d_Y(y,M(x)) = 0  \}$.
\end{enumerate}
  
\end{theorem}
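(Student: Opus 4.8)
The plan is to establish the equivalences by proving the cycle $(i)\Rightarrow(ii)\Rightarrow(iii)\Rightarrow(i)$, which keeps the argument linear and avoids repeating the sequential/neighborhood translation more than once.

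For $(i)\Rightarrow(ii)$: fix a sequence $x_n\to x_0$ and a point $y_0\in M(x_0)$. For each $k\in\N$, the open ball $V_k:=B_Y(y_0,1/k)$ meets $M(x_0)$, so by lower semicontinuity there is a neighborhood $U_k\in\mathcal{N}_X(x_0)$ with $M(x)\cap V_k\neq\emptyset$ for all $x\in U_k$; without loss of generality $U_k\supset U_{k+1}$. Using that $x_n\to x_0$, pick an increasing sequence $n_1<n_2<\cdots$ with $x_n\in U_k$ for all $n\geq n_k$, and set $n_0=n_1$. For $n$ with $n_k\leq n<n_{k+1}$, choose $y_n\in M(x_n)\cap V_k$; for $n<n_0$ choose $y_n\in Y$ arbitrarily. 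Then $y_n\in M(x_n)$ for $n\geq n_0$ and $d_Y(y_n,y_0)<1/k\to0$, so $y_n\to y_0$.

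For $(ii)\Rightarrow(iii)$: fix $y_0\in M(x_0)$; I must show $\limsup_{x\to x_0}d_Y(y_0,M(x))=0$, i.e.\ that for every sequence $x_n\to x_0$ one has $\limsup_n d_Y(y_0,M(x_n))=0$ (using the sequential characterization of $\limsup$ recalled in the Note, together with the fact that the distance function is nonnegative so its $\liminf$ is automatically $\geq 0$). Given such $(x_n)$, apply (ii) to obtain $y_n\in M(x_n)$ (for $n\geq n_0$) with $y_n\to y_0$; then $0\leq d_Y(y_0,M(x_n))\leq d_Y(y_0,y_n)\to0$, whence the $\limsup$ is $0$.

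For $(iii)\Rightarrow(i)$: let $V\subset Y$ be open with $M(x_0)\cap V\neq\emptyset$, and pick $y_0\in M(x_0)\cap V$. Choose $\varepsilon>0$ with $B_Y(y_0,\varepsilon)\subset V$. Since $\limsup_{x\to x_0}d_Y(y_0,M(x))=0<\varepsilon$, by definition of $\limsup$ as an infimum over neighborhoods there is $U\in\mathcal{N}_X(x_0)$ with $\sup_{x\in U}d_Y(y_0,M(x))<\varepsilon$. Then for every $x\in U$ there exists $y\in M(x)$ with $d_Y(y_0,y)<\varepsilon$, so $y\in B_Y(y_0,\varepsilon)\cap M(x)\subset V\cap M(x)$, i.e.\ $M(x)\cap V\neq\emptyset$. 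As $x_0$ and $V$ are arbitrary, $M$ is lower semicontinuous at $x_0$.

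The main obstacle is purely bookkeeping: in $(i)\Rightarrow(ii)$ one must interleave the two countable families of indices — the shrinking neighborhoods $U_k$ coming from lower semicontinuity and the tail indices $n_k$ coming from the convergence $x_n\to x_0$ — to produce a single sequence $(y_n)$ that is eventually a selection and converges; a careful diagonal-type choice of $n_k$ handles this. The other two implications are essentially unwinding the definitions of $\limsup$ and of the distance to a set, and present no real difficulty.
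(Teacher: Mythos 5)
Your proposal is correct and follows essentially the same route as the paper: the cycle $(i)\Rightarrow(ii)\Rightarrow(iii)\Rightarrow(i)$, with $(ii)\Rightarrow(iii)$ and $(iii)\Rightarrow(i)$ being direct unwindings of the neighborhood definition of $\limsup$ exactly as in the text. The only difference is cosmetic bookkeeping in $(i)\Rightarrow(ii)$, where you use nested balls $V_k$ and a diagonal choice of indices $n_k$ instead of the paper's approximate selection $d_Y(y_n,y_0)\leq d_Y(y_0,M(x_n))+\tfrac{1}{n}$; both arguments are valid.
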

\begin{proof} Without lose of any generality, let us assume that $M(x_0)\neq\emptyset$ (Indeed, if $M(x_0) = \emptyset$ all statements in the theorem are trivially verified). 

\paragraph{$(i)\implies (ii)$:} Suppose then that $M$ is lower semicontinuous at $x_0$ and take a sequence $(x_n)\subset X$ converging to $x_0$ and $y_0\in M(x_0)$. Lower semicontinuity entails that there is $n_0\in\N$ such that $M(x_n)\neq\emptyset$ for every $n\geq n_0$. Now, for each $n\geq n_0$, choose $y_n\in M(x_n)$  verifying that
\[
 d(y_n,y_0) \leq d(y_0,M(x_n)) + \frac{1}{n}.
\]
 Fix $\varepsilon>0$ and set $V = B(y_0,\varepsilon/2)$. By lower semicontinuity, there exists $n_\varepsilon\geq \max\{n_0, 2\varepsilon^{-1}\}$ large enough such that for all $n\geq n_{\varepsilon}$, $V\cap M(x_n)\neq\emptyset$. Then, for every $n\geq n_{\varepsilon}$ we get that
 \[
 d(y_n,y_0) \leq d(y_0,M(x_n)) + \frac{1}{n} \leq \frac{\varepsilon}{2} + \frac{1}{n_{\varepsilon}}\leq \frac{\varepsilon}{2} + \frac{\varepsilon}{2} = \varepsilon.
 \]
 Thus, $y_n\to y_0$, which proves the necessity.

\paragraph{$(ii)\implies (iii)$:} Let us denote $S= \{ y\in Y\, :\, \limsup_{x\to x_0} d_Y(y,M(x)) = 0  \}$. Let $y\in M(x_0)$ and suppose that $y\notin S$. Then, $\limsup_{x\to x_0} d_Y(y,M(x_0))>0$ and so there exist $\varepsilon>0$ and a sequence $x_n\to x_0$ such that $d_Y(y,M(x_n))>\varepsilon$. However, this is a contradiction since, by hypothesis, there must exist a sequence $y_n\to y$ with $y_n\in M(x_n)$ for every $n\in\N$ large enough. We conclude that $M(x_0)\subseteq S$.

\paragraph{$(iii)\implies (i)$:} Take an open set $V\subset Y$ such that $M(x_0)\cap V\neq\emptyset$, and choose $y\in V\cap M(x_0)$. Without lose of generality, we can assume $V = B(y,\varepsilon)$ for some $\varepsilon>0$. Now, since $y\in M(x_0)$, we have that $\limsup_{x\to x_0} d_Y(y,M(x)) = 0$. Thus, there exists $U\in\mathcal{N}_X(x_0)$ such that
\[
\sup_{x\in U} d(y,M(x)) < \frac{\varepsilon}{2}.
\]
In particular, for each $x\in U$, $d(y,M(x))<\varepsilon/2$ and so there must exists $z\in M(x)$ with $z\in B(y,\varepsilon)$. Thus, $M(x)\cap V \neq\emptyset$ for every $x\in U$. We conclude that $M$ is lower semicontinuous at $x_0$, finishing the proof.
\end{proof}

Upper semicontinuity needs complementary hypotheses to be characterized sequentially. In fact, what we will do is to characterizes it in terms of closedness of the graph.

\begin{theorem}[Sequential characterization of Upper Semicontinuity] \label{ch03:thm:Seq-USC} Let $X$ and $Y$ be two metric spaces, and let $x_0\in X$. For a set-valued map $M:X\tto Y$ consider the following assertions:
\begin{enumerate}
\item[(i)] $M$ is upper semicontinuous at $x_0$.
\item[(ii)] For any sequence $(x_n,y_n)\in \gph M$ converging to $(x_0,y_0)$, one has that $y_0\in M(x_0)$.
\item[(iii)] $M(x_0) \supset \{ y\in Y\,:\, \liminf_{x\to x_0} d_Y(y,M(x)) = 0 \}$.
\end{enumerate}
Then $(ii)\iff (iii)$ and they are independent of $(i)$. However:
\begin{enumerate}
    \item If $M(x_0)$ is closed, then $(i)\implies (ii)$.
    \item If there exists $U\in\mathcal{N}_X(x_0)$ with $\overline{M(U)}$ compact, then $(ii)\implies (i)$.
\end{enumerate}
\end{theorem}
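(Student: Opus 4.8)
The plan is to treat the equivalence $(ii)\iff(iii)$ first as a purely formal manipulation, then establish the two conditional implications $(i)\implies(ii)$ under closedness and $(ii)\implies(i)$ under the local compactness hypothesis, and finally exhibit explicit counterexamples showing $(i)$ is logically independent of $(ii)$–$(iii)$ in the absence of those hypotheses.

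\emph{Step 1: $(ii)\iff(iii)$.} I would unfold condition $(iii)$. The set $\{y : \liminf_{x\to x_0} d_Y(y,M(x))=0\}$ consists exactly of those $y$ for which there is a sequence $x_n\to x_0$ with $d_Y(y,M(x_n))\to 0$, hence (choosing near-projections $y_n\in M(x_n)$ with $d_Y(y,y_n)\le d_Y(y,M(x_n))+1/n$) exactly those $y$ that are limits of sequences $y_n\in M(x_n)$ with $x_n\to x_0$. Comparing with $(ii)$: $(ii)$ says every such limit point lies in $M(x_0)$, i.e. $\{y:\liminf_{x\to x_0}d_Y(y,M(x))=0\}\subseteq M(x_0)$, which is precisely $(iii)$. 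The only mild care needed is the standard passage between the $\liminf$-of-distances formulation and the sequential formulation, using that $Y$ is metric so neighborhoods of $x_0$ are generated by the balls $B(x_0,1/n)$.

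\emph{Step 2: conditional implications.} For $(i)\implies(ii)$ assuming $M(x_0)$ closed: take $(x_n,y_n)\in\gph M$ with $(x_n,y_n)\to(x_0,y_0)$ and suppose $y_0\notin M(x_0)$. Since $M(x_0)$ is closed, $V:=Y\setminus\{y_0\}$ is open with $M(x_0)\subset V$; actually I would instead pick an open $V\supset M(x_0)$ with $y_0\notin \overline V$ — more carefully, use that $d_Y(y_0,M(x_0))=:3\varepsilon>0$ and set $V=\{y: d_Y(y,M(x_0))<\varepsilon\}$, an open superset of $M(x_0)$ not containing any point within $\varepsilon$ of $y_0$. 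By upper semicontinuity there is $U\in\mathcal N_X(x_0)$ with $M(U)\subset V$, so for large $n$, $x_n\in U$ gives $y_n\in V$, contradicting $y_n\to y_0$. For $(ii)\implies(i)$ assuming some $U_0\in\mathcal N_X(x_0)$ has $\overline{M(U_0)}$ compact: argue by contradiction. If $M$ is not u.s.c. at $x_0$, there is an open $V\supset M(x_0)$ and a sequence $x_n\to x_0$ (eventually inside $U_0$) with $M(x_n)\not\subset V$, so pick $y_n\in M(x_n)\setminus V$. By compactness of $\overline{M(U_0)}$, a subsequence $y_{n_k}\to y_0$; then $(x_{n_k},y_{n_k})\in\gph M$ converges to $(x_0,y_0)$, so $(ii)$ gives $y_0\in M(x_0)\subset V$. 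But $V$ is open and $y_{n_k}\notin V$ for all $k$, so $y_0\notin V$ — contradiction.

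\emph{Step 3: independence of $(i)$ from $(ii)$.} I would recycle the toy maps on $[0,1]$ from Example~\ref{ch03:ex:UpperVsLower}. The map $F_2$ (with $F_2(t)=[0,1]$ for $t<1$ and $F_2(1)=\{0\}$) is lower but not upper semicontinuous at $t=1$, yet its graph is closed, so it satisfies $(ii)$ while failing $(i)$; this shows $(ii)\not\Rightarrow(i)$ without a compactness-type hypothesis — except $[0,1]$ is compact, so to make the point I should instead take $Y=\R$ and e.g. $M(t)=\R$ for $t<1$, $M(1)=\{0\}$: graph closed, $(ii)$ holds, but not u.s.c. at $1$ (here $\overline{M(U)}$ is never compact). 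Conversely, to see $(i)\not\Rightarrow(ii)$, take $M$ with $M(x_0)$ not closed, e.g. $M:\R\tto\R$, $M(t)=\{0\}$ for $t\ne 0$ and $M(0)=(0,1)$: one checks this is u.s.c. at $0$ (any open $V\supset(0,1)$ contains $0$ for ``nearby enough'' reasoning—actually $V$ need not contain $0$; a cleaner example is $M(0)=[0,1)$, $M(t)=\{1/2\}$ otherwise, which is u.s.c. at $0$ but $(1-1/n,0)$-type sequences in the graph... ). The delicate point, and the main obstacle, is producing a \emph{clean} example for $(i)\not\Rightarrow(ii)$: I expect to settle on something like $M(t)\equiv(0,1)$ for all $t$ (u.s.c. everywhere, constant) together with a sequence $y_n=1/n\in M(x_n)$, $x_n\to 0$, with $y_n\to 0\notin M(0)$ — this cleanly violates $(ii)$ while $(i)$ holds trivially. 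Assembling these four small examples/arguments completes the proof; the routine metric-space estimates in Steps 1–2 I would not belabor.
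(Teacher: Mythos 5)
Your Steps 1 and 2 are correct and essentially identical to the paper's own argument: the equivalence $(ii)\iff(iii)$ via near-projections $y_n\in M(x_n)$ with $d_Y(y,y_n)\le d_Y(y,M(x_n))+1/n$ together with the ball-based unfolding of $\liminf_{x\to x_0}$; the implication $(i)\implies(ii)$ via the open $\varepsilon$-enlargement $\{y: d_Y(y,M(x_0))<\varepsilon\}$ of the closed set $M(x_0)$; and the implication $(ii)\implies(i)$ by extracting a convergent subsequence of the offending points $y_n\in M(x_n)\setminus V$ inside the compact set $\overline{M(U_0)}$ and using that $Y\setminus V$ is closed.

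The genuine gap is in Step 3, in your example for ``$(ii)$ holds but $(i)$ fails.'' The map $F_2$ (with $F_2(t)=[0,1]$ for $t<1$ and $F_2(1)=\{0\}$) does \emph{not} have a closed graph and does \emph{not} satisfy $(ii)$ at $t=1$: the sequence $(1-1/n,\,1/2)$ lies in $\gph F_2$ and converges to $(1,1/2)$, yet $1/2\notin F_2(1)$. The same defect persists in your modification $M(t)=\R$ for $t<1$, $M(1)=\{0\}$: the points $(1-1/n,\,5)$ are in the graph and converge to $(1,5)\notin\gph M$. You in fact brushed against the problem when you noted that $[0,1]$ is compact — by part 2 of the theorem, $(ii)$ would then force $(i)$, so since $F_2$ is not upper semicontinuous at $1$ it \emph{cannot} satisfy $(ii)$ there; the right conclusion is that the example is wrong, not merely that it ``fails to make the point.'' Any map that is closed at $x_0$ but not upper semicontinuous there must, by part 2 of the theorem itself, have $\overline{M(U)}$ non-compact for \emph{every} neighborhood $U$ of $x_0$, so the values have to escape to infinity as $x\to x_0$. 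A clean instance is $M(t)=\{1/t\}$ for $t\neq 0$ and $M(0)=\{0\}$: no sequence $(t_n,1/t_n)$ with $t_n\to 0$ converges, so the graph is closed and $(ii)$ holds at $0$, yet $M(U)\not\subset(-1,1)$ for any neighborhood $U$ of $0$, so $(i)$ fails. This is exactly the role played by Example~\ref{ch03:ex:Closed-Vs-USC} in the paper, to which its proof simply refers. Your example for the converse direction — the constant map $M(t)\equiv(0,1)$, which is trivially upper semicontinuous while $y_n=1/n\in M(x_n)$ converges to $0\notin M(x_0)$ — is correct.
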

\begin{proof}Let us show first the equivalence $(ii)\iff (iii)$.
 \paragraph{($\implies$):} Let $S = \{y \in Y\, :\,  \liminf_{x\to x_0} d(y,M(x)) = 0\}$, and pick $y\in S$. Then, there exists a sequence $x_n\to x$ such that $d(y,M(x_n))\to 0$. By considering a subsequence $(x_n)_{n\geq n_0}$ with $n_0\in\N$ large enough, we can assume that $M(x_n)\neq\emptyset$ for every $n\in\N$. Thus, we can select a sequence $(y_n)$ such that
 \[
 y_n\in M(x_n)\quad\text{and}\quad d(y,y_n) \leq d(y,M(x_n))+\frac{1}{n}, \qquad\forall n\in\N.
 \]
 Then, $y_n\to y$, and by hypothesis, $y\in M(x_0)$. We deduce that $S\subset M(x_0)$.
 \paragraph{($\impliedby$):} Let $(x_n,y_n)\subset \gph M$ converging to $(x_0,y_0)\in X\times Y$. By taking a subsequence, we can suppose that $d(x_n,x_0)\leq 1/n$. Thus, we can write that
 \begin{align*}
     \liminf_{x\to x_0} d(y_0,M(x)) &= \sup_{n\in\N}\inf_{x\in B(x_0,1/n)} d(y_0,M(x))\\
     &\leq \sup_{n\in\N}\inf_{k\geq n} d(y_0, M(x_k))\\
     &\leq \sup_{n\in\N}\inf_{k\geq n} d(y_0, y_k) = \liminf_{n\to\infty} d(y_0,y_n) = 0.
 \end{align*}
 Thus, $y_0\in S$ and by hypothesis, $y_0\in M(x_0)$. This completes the proof of $(ii)\iff (iii)$.\\

\noindent Now, for the independence with $(i)$, see Example~\ref{ch03:ex:Closed-Vs-USC}. Finally, let us prove the last two statements.
 \begin{enumerate}
     \item Assume that $M(x_0)$ is closed and that $M$ is upper semicontinuous. Let $(x_n,y_n)\subset \gph M$ converging to $(x_0,y_0)\in X\times Y$. Let $V_{\varepsilon} = M(x_0) + B(0,\varepsilon)$. Clearly, $V_{\varepsilon}$ is open and contains $M(x_0)$. Thus, there exists $U\in\mathcal{N}(x_0)$ such that $M(U)\subset V_{\varepsilon}$. Since $x_n\to x_0$, this yields that for $n\geq n_0$ with $n_0$ large enough, we have that $(y_n)_{n\geq n_0}\subset V_{\varepsilon}$ which yields, by continuity of $d(\cdot,M(x_0))$
     \[
     d(y_0,M(x_0)) = \lim_n d(y_n,M(x_0)) \leq \varepsilon.
     \]
     Since $\varepsilon>0$ is arbitrary, we deduce that $d(y_0,M(x_0)) = 0$, and since $M(x_0)$ is closed, we conclude that $y_0\in M(x_0)$.
     \item Reasoning by absurd, let us suppose that $M$ is not upper semicontinuous at $x_0$. Then, there exists an open set $V\subset Y$ with $M(x_0)\subset V$ such that, for every $O\in \mathcal{N}(x_0)$, the set $M(O)\setminus V$ is nonempty. in particular, for each $n\in \N$ we can select $x_n\in O_n:=U\cap B(x_0,1/n)$ and $y_n\in M(x_n)$ such that $y_n\in M(O_n)\setminus V$. Note that $(y_n)\subset M(U)$. Thus, by compactness of $\overline{M(U)}$, we can assume (up to a subsequence of $(x_n,y_n)$) that $(y_n)$ converges. Then, we have that $y_0 = \lim_n y_n \in M(x_0)\subset V$, which is a contradiction since $\lim_n y_n \in Y\setminus V$. The proof is now completed.
 \end{enumerate}
\end{proof}

The condition $(ii)$ in Theorem~\ref{ch03:thm:Seq-USC} is often referred as the \textit{closedness of $M$ at $x_0$}. That is, a set-valued map $M$ (or its graph $\gph M$) is said to be \textbf{closed at $x_0$}, if the sequential condition $(ii)$ holds. Indeed, it is not hard to verify that the condition $(ii)$ holds at every $x\in X$ if and only if $\gph M$ is a closed set. Thus, condition $(ii)$ is like a pointwise closedness of $\gph M$.

\begin{example}[Closedness versus Upper semicontinuity] \label{ch03:ex:Closed-Vs-USC}
Consider the set-valued maps $M_1,M_2:[0,+\infty)\tto \R$ given by
\[
M_1(t) = \begin{cases}
   \displaystyle \left[\frac{1}{t+1}, \frac{1}{t}\right]&t>0,\\
    \{0\}&t=0,
\end{cases}\quad\text{ and }\quad M_2(t) = \begin{cases}
    \displaystyle \left[\frac{1}{t+1}, \frac{1}{t}\right]&t>0,\\
    (0,+\infty)&t=0.
\end{cases}
\]
\begin{figure}[ht]
    \centering
    \includegraphics[width=0.7\linewidth]{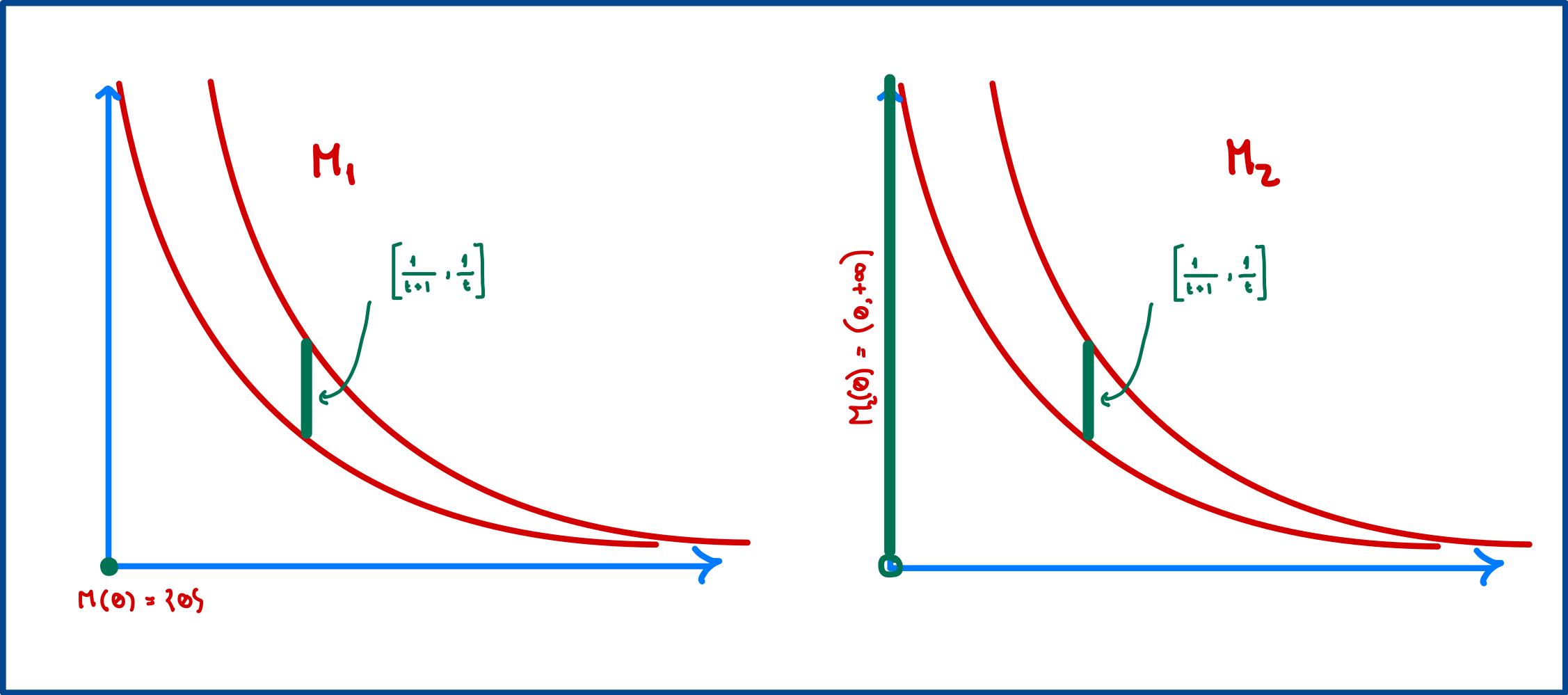}
    \caption{Illustration of gab between closedness and upper semicontinuity.}
    \label{ch03:fig:ClosedVsUpper}
\end{figure}

Then, $M_1$ is closed at $0$ but not upper semicontinuous at $0$, while $M_2$ is upper semicontinuous at $0$ but not closed at $0$. The situation is illustrated in Figure~\ref{ch03:fig:ClosedVsUpper}.
\end{example}

By considering global closedness and global upper semicontinuity, Theorem \ref{ch03:thm:Seq-USC} yields the following direct (and useful) corollary.

\begin{corollary}\label{ch03:cor:USC-ClosedGraph} Let $X$ and $Y$ be two metric spaces and $M:X\tto Y$, One has that
 \begin{enumerate}
    \item If $M$ is upper semicontinuous and $M$ has closed values, then $M$ is closed.
    \item If $M$ is closed and $M$ is locally precompact (i.e. for every $x\in X$, there exists $U\in\mathcal{N}(x)$ such that $\overline{M(U)}$ is compact), then $M$ is upper semicontinuous.
\end{enumerate}   
\end{corollary}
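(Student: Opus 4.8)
The plan is to read both assertions off the pointwise results in Theorem~\ref{ch03:thm:Seq-USC}, combined with the observation recorded in the remark immediately after that theorem: a set-valued map $M$ is closed (i.e. $\gph M$ is a closed subset of $X\times Y$) precisely when the sequential condition $(ii)$ of Theorem~\ref{ch03:thm:Seq-USC} holds \emph{at every} point $x_0\in X$. This equivalence is where the only genuine (and very mild) content lies: since $X\times Y$ is a metric space, closedness of $\gph M$ is the same as its sequential closedness, and sequential closedness of $\gph M$ says exactly that whenever $(x_n,y_n)\in\gph M$ converges to $(x_0,y_0)$ one has $y_0\in M(x_0)$ — that is, $(ii)$ holding simultaneously for all $x_0$.

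For the first statement I would fix an arbitrary $x_0\in X$. By hypothesis $M$ is upper semicontinuous at $x_0$, and $M(x_0)$ is closed because $M$ has closed values; hence item~1 of Theorem~\ref{ch03:thm:Seq-USC} applies and yields $(i)\implies(ii)$, so condition $(ii)$ holds at $x_0$. Since $x_0$ was arbitrary, $(ii)$ holds at every point of $X$, and by the equivalence above $\gph M$ is closed, i.e. $M$ is closed.

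For the second statement I would again fix an arbitrary $x_0\in X$. Since $M$ is closed, condition $(ii)$ holds at $x_0$ (by the same equivalence). Local precompactness provides a neighborhood $U\in\mathcal N(x_0)$ with $\overline{M(U)}$ compact, so item~2 of Theorem~\ref{ch03:thm:Seq-USC} applies and gives $(ii)\implies(i)$; thus $M$ is upper semicontinuous at $x_0$. As $x_0$ was arbitrary, $M$ is upper semicontinuous on $X$. The whole argument is a quantifier-bookkeeping exercise built on Theorem~\ref{ch03:thm:Seq-USC}; I do not anticipate any real obstacle, the only step worth stating explicitly being the passage from the pointwise condition $(ii)$ to global closedness of the graph, which uses that metric spaces are sequential.
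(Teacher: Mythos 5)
Your proposal is correct and follows exactly the route the paper intends: the corollary is stated as a direct consequence of Theorem~\ref{ch03:thm:Seq-USC}, obtained by applying its two pointwise implications at every $x_0\in X$ and invoking the remark that condition $(ii)$ holding everywhere is equivalent to closedness of $\gph M$ (sequential closedness sufficing in metric spaces). Nothing is missing.
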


\section{Berge Maximum Theorem}

Upper and lower semicontinuity of set-valued maps allows us to study the continuity of the solution set and the value function of parametric optimization problems. In this section, we present the Berge Maximum theorem in two forms. Then, we present some important examples where upper/lower semicontinuity holds, allowing us to apply the Maximum theorem.
\subsection{The two forms of the Maximum theorem}

\begin{theorem}[Berge Maximum theorem, first form] \label{ch03:thm:Berge-firstForm} Let $X$ and $Y$ be two metric spaces, $f:X\times Y\to\R$ a function, and $F:X\tto Y$ a set-valued map with nonempty values. Let us consider the \textbf{marginal function}
\begin{align*}
    g:X&\to\R\cup\{+\infty\}\\
    x&\mapsto \sup_y\{ f(x,y)\ :\ y\in F(x) \}.
\end{align*}
 Then, 
 \begin{enumerate}
     \item If $f$ is lower semicontinuous and $F$ is lower semicontinuous, then $g$ is lower semicontinuous.
     \item If $f$ is upper semicontinuous, $F$ is upper semicontinuous, and $F$ takes compact values, then $g$ is upper semicontinuous.
 \end{enumerate}
\end{theorem}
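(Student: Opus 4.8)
The plan is to prove the two semicontinuity statements separately, in each case working from the sequential characterizations established in Theorems~\ref{ch03:thm:seq-LSC} and~\ref{ch03:thm:Seq-USC}, together with the sequential characterizations of lower/upper semicontinuity of real-valued functions recalled in the ``Note: Semilimits of functions''.

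For part~1 (lower semicontinuity of $g$), I would fix $x_0\in X$ and a sequence $x_n\to x_0$, and show $\liminf_n g(x_n)\ge g(x_0)$. It suffices to show that for every $y_0\in F(x_0)$ one has $\liminf_n g(x_n)\ge f(x_0,y_0)$, and then take the supremum over $y_0\in F(x_0)$. Since $F$ is lower semicontinuous at $x_0$, Theorem~\ref{ch03:thm:seq-LSC}(ii) gives a sequence $y_n\to y_0$ with $y_n\in F(x_n)$ for all $n$ large. Then $g(x_n)\ge f(x_n,y_n)$ by definition of the sup, and lower semicontinuity of $f$ at $(x_0,y_0)$ gives $\liminf_n f(x_n,y_n)\ge f(x_0,y_0)$; chaining these inequalities yields $\liminf_n g(x_n)\ge f(x_0,y_0)$. (A small care point: if $g(x_0)=+\infty$, one picks $y_0\in F(x_0)$ with $f(x_0,y_0)$ arbitrarily large and lets it grow; the same argument works.)

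For part~2 (upper semicontinuity of $g$), I would again fix $x_0\in X$ and $x_n\to x_0$, and show $\limsup_n g(x_n)\le g(x_0)$. Here I use that $F$ takes nonempty compact values, so the supremum defining $g(x_n)$ is attained: pick $y_n\in F(x_n)$ with $g(x_n)=f(x_n,y_n)$. Pass to a subsequence realizing $\limsup_n g(x_n)=\lim_k f(x_{n_k},y_{n_k})$. The key is to extract a convergent subsequence of $(y_{n_k})$: upper semicontinuity of $F$ at $x_0$ together with compactness of $\overline{M(U)}$ for a suitable neighborhood $U$ of $x_0$ — invoke Corollary~\ref{ch03:cor:USC-ClosedGraph}(1), noting $F$ is upper semicontinuous with closed (compact) values, hence closed — gives, after a further subsequence, $y_{n_k}\to y_0$ with $(x_0,y_0)\in\gph F$, i.e. $y_0\in F(x_0)$. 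Then upper semicontinuity of $f$ at $(x_0,y_0)$ gives $\limsup_k f(x_{n_k},y_{n_k})\le f(x_0,y_0)\le g(x_0)$, which is what we wanted.

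The main obstacle — and the only place the compactness hypothesis is genuinely used — is the extraction of the convergent subsequence of the near-maximizers $(y_n)$ and the verification that its limit lands back in $F(x_0)$; without compact values (or some local precompactness of $F$) the maximizers can escape to infinity and $g$ can fail to be upper semicontinuous. One should also be slightly careful with the bookkeeping of nested subsequences and with the degenerate cases where some $g(x_n)=+\infty$ (ruled out here since $F(x_n)$ is compact and $f(x_n,\cdot)$ is upper semicontinuous, hence attains a finite max on $F(x_n)$) or $g(x_0)=+\infty$ (in which case the inequality is trivial). Everything else is a routine chaining of semilimit inequalities.
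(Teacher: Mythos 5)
Your argument for part 1 is correct and is essentially the paper's proof, modulo a cosmetic streamlining: by reducing to $\liminf_n g(x_n)\ge f(x_0,y_0)$ for each fixed $y_0\in F(x_0)$ and then taking the supremum over $y_0$, you avoid the paper's explicit case distinction between $g(x_0)$ finite and $g(x_0)=+\infty$. Nothing to object to there.

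In part 2 there is a genuine gap at the one step you yourself identify as the key one. You extract a convergent subsequence of the maximizers $(y_{n_k})$ by appealing to ``compactness of $\overline{F(U)}$ for a suitable neighborhood $U$ of $x_0$''. No such neighborhood is guaranteed by the hypotheses: the theorem is stated for arbitrary metric spaces $Y$, and upper semicontinuity plus compact values does \emph{not} imply local precompactness of $F$ near $x_0$. (Take $Y$ an infinite-dimensional normed space: upper semicontinuity only places $F(x_n)$ inside an $\varepsilon$-neighborhood of the compact set $F(x_0)$, and the closure of such a neighborhood is not compact, so a sequence $y_n\in F(x_n)$ need not have any convergent subsequence a priori.) Your conclusion --- that a subsequence of $(y_n)$ converges to some $y_0\in F(x_0)$ --- is in fact true, but it has to be earned differently. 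The paper's device is to project: choose $z_n\in F(x_0)$ with $d(y_n,z_n)$ within $d(y_n,F(x_0))$ (or attaining it, since $F(x_0)$ is compact), extract $z_{n_k}\to z\in F(x_0)$ using \emph{only} the compactness of $F(x_0)$, and then use upper semicontinuity of $F$ to get $d(y_n,F(x_0))\to 0$, whence $d(y_{n_k},z)\le d(y_{n_k},z_{n_k})+d(z_{n_k},z)\to 0$ and $y_{n_k}\to z$. With that substitution (and then your appeal to closedness of the graph becomes unnecessary, since the limit $z$ is already exhibited as a point of $F(x_0)$), the rest of your part 2 --- attainment of the sup on each compact $F(x_n)$, passing to a subsequence realizing the $\limsup$, and the final upper-semicontinuity estimate $\limsup_k f(x_{n_k},y_{n_k})\le f(x_0,y_0)\le g(x_0)$ --- goes through as you wrote it. If you are content to prove the theorem only for $Y=\R^q$ (which is all the paper later uses), your original justification is fine, but then you should say so explicitly.
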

\begin{proof}Let us prove each of the statements separately.
\paragraph{1.} Let $x\in X$ and $(x_n)\subset X$ with $x_n\to x$. It is enough to prove that $g(x) \leq \liminf_n g(x_n)$. 

Suppose first that $g(x)\in\R$. Let $\varepsilon>0$ and choose $y\in F(x)$ such that $g(x) \leq f(x,y) + \varepsilon$. Since $F$ is lower semicontinuous, there exists a sequence $(y_n)\subset Y$ converging to $y$ and such that $y_n\in F(x_n)$ for every $n\in\N$ large enough. Then, lower semicontinuity of $f$ allows us to write
\[
g(x) \leq \varepsilon+ f(x,y) \leq \varepsilon +\liminf_n f(x_n,y_n) \leq \varepsilon +\liminf_n g(x_n).
\]
Since $\varepsilon>0$ is arbitrary, the result follows.

Now, suppose that $g(x) = +\infty$. Let $M>0$ and choose $y\in F(x)$ so $f(x,y)>M$. Again taking a sequence $(y_n)$ converging to $y$ and such that $y_n\in F(x_n)$ for every $n\in\N$ large enough, we can write
\[
\liminf_n g(x_n) \geq \liminf_n f(x_n,y_n) \geq f(x_n,y_n) > M.
\]
Since $M$ is arbitrary, we conclude that $\liminf_n g(x_n) = +\infty$ and the result follows.
\paragraph{2.}Let $x\in X$ and $(x_n)\subset X$ with $x_n\to x$. It is enough to prove that $g(x) \geq \limsup_n g(x_n)$. Let $\alpha := \limsup_n g(x_n)\in\overline{\R}$. Up to a subsequence, we might suppose that $g(x_n)$ converges to $\alpha$.

Now, since $F(x_n)$ is compact and $f(x_n,\cdot)$ is upper semicontinuous, there exists $y_n\in F(x_n)$ such that $g(x_n) = f(x_n,y_n)$. Also, since $F(x)$ is compact, we can always find
\[
z_n \in \mathrm{Proj}(y_n, F(x)) := \argmin_{w\in F(x)} d(y_n,w).
\]
Up to a subsequence,  we can suppose that $z_n\to z\in F(x)$. 

Let $\varepsilon>0$ and let us consider $V = \{y\in Y\, :\, d(y,F(x))<\tfrac{\varepsilon}{2} \}$. Since $F$ is upper semicontinuous, there exists $n_0\in \N$ large enough such that $F(x_n)\subset V$ for all $n\geq n_0$. By enlarging $n_0$ if necessary, we might assume that $d(z_n,z)\leq \tfrac{\varepsilon}{2}$ for each $n\geq n_0$ as well. Then,
\[
\forall n\geq n_0, d(y_n,z) \leq d(y_n,z_n) + d(z_n,z) = d(y_n,F(x_n)) + d(z_n,z) \leq \varepsilon.
\]
Since $\varepsilon>0$ is arbitrary, we deduce that $y_n\to z$. Then, by upper semicontinuity of $f$ we have that
\[
\alpha = \lim_n f(x_n,y_n) \leq f(x,z) \leq \sup_{y\in F(x)} f(x,y) = g(x).
\]
The proof is then completed.
\end{proof}

\begin{theorem}[Berge Maximum theorem, second form]\label{ch03:thm:Berge-secondForm} Let $X$ and $Y$ be two metric spaces, $f:X\times Y\to\R$ be a continuous function, and $F:X\tto Y$ be a continuous (i.e. upper and lower semicontinuous) set-valued map with nonempty compact values. Then,
\begin{enumerate}
    \item The value function $\varphi(x) := \min_y\{ f(x,y)\, :\, y\in F(x)\}$ is continuous.
    \item The solution map $S:X\tto Y$ given by $S(x) = \argmin_{y}\{ f(x,y)\, :\, y\in F(x)\}$ is upper semicontinuous.
\end{enumerate}
\end{theorem}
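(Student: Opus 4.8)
The plan is to obtain both parts from the first form of the Maximum theorem (Theorem~\ref{ch03:thm:Berge-firstForm}) together with the sequential tools of the previous subsection; the only genuinely delicate point is the compactness argument needed for the upper semicontinuity of $S$. First note that, since each $F(x)$ is nonempty and compact and $f(x,\cdot)$ is continuous, the infimum defining $\varphi(x)$ is attained; hence $\varphi(x)\in\R$ and $S(x)\neq\emptyset$ for every $x$, so all objects in the statement make sense. For part~1, I would apply Theorem~\ref{ch03:thm:Berge-firstForm} to $h:=-f$, whose marginal function is $g(x)=\sup_y\{-f(x,y):y\in F(x)\}=-\varphi(x)$. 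Since $f$ is continuous, $h$ is both lower and upper semicontinuous, and $F$ is both lower and upper semicontinuous with compact values. Statement~1 of Theorem~\ref{ch03:thm:Berge-firstForm} gives that $-\varphi$ is lower semicontinuous, i.e.\ $\varphi$ is upper semicontinuous; statement~2 gives that $-\varphi$ is upper semicontinuous, i.e.\ $\varphi$ is lower semicontinuous. Hence $\varphi$ is continuous.

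For part~2, I would first show that $\gph S$ is closed. Since $F$ is upper semicontinuous with compact (hence closed) values, Corollary~\ref{ch03:cor:USC-ClosedGraph}(1) gives that $F$ is closed; so if $(x_n,y_n)\in\gph S$ converges to $(x_0,y_0)$, then $y_n\in F(x_n)$ forces $y_0\in F(x_0)$, while $f(x_n,y_n)=\varphi(x_n)$ together with continuity of $f$ and of $\varphi$ (part~1) yields $f(x_0,y_0)=\varphi(x_0)$; thus $y_0\in S(x_0)$. Next, given a sequence $x_n\to x_0$ and points $y_n\in S(x_n)$, I would use upper semicontinuity of $F$ at $x_0$ applied to the open sets $V_k=\{y:d_Y(y,F(x_0))<1/k\}\supset F(x_0)$ to conclude that $d_Y(y_n,F(x_0))\to 0$; choosing $z_n\in F(x_0)$ with $d_Y(y_n,z_n)\le d_Y(y_n,F(x_0))+1/n\to 0$ and invoking compactness of $F(x_0)$, some subsequence $z_{n_j}\to z\in F(x_0)$, hence $y_{n_j}\to z$, and by closedness of $\gph S$ we get $z\in S(x_0)$. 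Finally, if $S$ failed to be upper semicontinuous at some $x_0$, there would be an open set $V\supset S(x_0)$ and points $x_n\to x_0$ with $y_n\in S(x_n)\setminus V$; the subsequence just constructed converges to a point $z\in S(x_0)\subset V$, contradicting that $Y\setminus V$ is closed and contains every $y_n$.

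The main obstacle is exactly this last sequential-compactness step. One cannot simply deduce upper semicontinuity of $S$ from closedness of its graph via Theorem~\ref{ch03:thm:Seq-USC}, since that implication requires local precompactness of $S$, which in a general metric space does \emph{not} follow from ``$F$ upper semicontinuous with compact values'' (the closed $\varepsilon$-neighborhood of a compact set need not be compact). The remedy is to extract convergent subsequences \emph{by hand}, using upper semicontinuity of $F$ at the single point $x_0$ together with compactness of the single set $F(x_0)$ — precisely the data that is available. Everything else is routine bookkeeping with semilimits and the already-established first form.
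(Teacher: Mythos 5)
Your proof is correct and follows essentially the same route as the paper's: part~1 is obtained from the first form (Theorem~\ref{ch03:thm:Berge-firstForm}, applied with the sign flip $f\mapsto -f$), and part~2 proceeds by contradiction, extracting a convergent subsequence of minimizers via upper semicontinuity of $F$ at $x_0$ together with compactness of $F(x_0)$ --- exactly the argument the paper imports by ``replicating the second part'' of its proof of the first form. Your explicit observation that one cannot shortcut through Theorem~\ref{ch03:thm:Seq-USC}/Corollary~\ref{ch03:cor:USC-ClosedGraph} (since local precompactness of $F$ is not available in a general metric space) is a useful clarification of a point the paper leaves implicit, but it does not change the underlying approach.
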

\begin{proof} The continuity of the value function is exactly the conclusion of Theorem~\ref{ch03:thm:Berge-firstForm}. Thus, we only need to prove the second statement. Reasoning by absurd, let us suppose that $S$ is not upper semicontinuous. That is, there exists $x\in X$ and $V\subset Y$ open with $S(x)\subset V$ such that for every $U\in\mathcal{N}(x)$, $S(U)\setminus V$ is nonempty. In particular, we can construct a sequence $(x_n,y_n)\subset \gph S$ such that $x_n\to x$ and $y_n\in V\setminus S(x_n)$ for every $n\in \N$.

    We can replicate the argument of the second part of the proof of Theorem~\ref{ch03:thm:Berge-firstForm} to deduce that, up to a subsequence, $(y_n)$ is convergent and  $y = \lim_n y_n \in F(x)$. Using the continuity of $f$ and the continuity of the value function $\varphi$, we get that
    \[
    \varphi(x) = \lim_n \varphi(x_n) = \lim_n f(x_n,y_n) = f(x,y).
    \]
    Then, $y\in S(x) \subset V$ which is a contradiction. The proof is then completed.
\end{proof}
\begin{note}{Remark: First form vs Second form}
    The first form of the Berge Maximum theorem allow us to decouple the sufficient conditions to have upper and lower semicontinuity of the value function, separately. The second form instead mixes up both sufficient conditions to deduce also the upper semicontinuity of the solution map.
\end{note}

\subsection{Continuity of constrained sets}\label{ch02:subsec:ContinuityCOnstrainedSets}

How hard is to verify upper and lower semicontinuity? In order to use Berge Maximum theorem, this is a very relevant issue we need to address. In particular, we will be interested in set-valued maps $F:X\tto Y$ of the form
\[
F(x) = \{ y\in Y\,:\, g_i(x,y)\leq 0,\, i=1,\ldots,m \}.
\]
In this case, closedness is very easy to verify.
\begin{proposition}[Closedness of constrained maps]\label{ch03:prop:closednessConstrained} Let $X$ and $Y$ be two metric spaces and let $F:X\tto Y$ given by
    \[
    F(x) = \{ y\in Y\,:\, g_i(x,y)\leq 0,\, i=1,\ldots,m \},
    \]
  where $g_i:X\times Y \to \R$ are lower semicontinuous functions for $i=1,\ldots,m$.  Then, $F$ is closed.
\end{proposition}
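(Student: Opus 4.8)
The plan is to use the sequential characterization of closedness, namely condition $(ii)$ of Theorem~\ref{ch03:thm:Seq-USC}: it suffices to show that for every sequence $(x_n,y_n)\in\gph F$ converging to some $(x_0,y_0)\in X\times Y$, we have $y_0\in F(x_0)$. Recall from the remark following Theorem~\ref{ch03:thm:Seq-USC} that verifying this pointwise condition at every point of $X$ is equivalent to $\gph F$ being a closed set, which in turn is equivalent to $F$ being closed.

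The key steps are as follows. First I would fix such a convergent sequence $(x_n,y_n)\to(x_0,y_0)$ with $(x_n,y_n)\in\gph F$ for all $n$. By definition of $F$, this means $g_i(x_n,y_n)\leq 0$ for every $i\in\{1,\ldots,m\}$ and every $n\in\N$. Next, fix an index $i$. Since the joint map $(x,y)\mapsto(x,y)$ is continuous and $(x_n,y_n)\to(x_0,y_0)$ in $X\times Y$, the lower semicontinuity of $g_i$ at $(x_0,y_0)$ gives
\[
g_i(x_0,y_0) \leq \liminf_{n\to\infty} g_i(x_n,y_n) \leq 0,
\]
where the last inequality holds because each term $g_i(x_n,y_n)$ is $\leq 0$, so the inferior limit of the sequence is also $\leq 0$. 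Since this holds for every $i=1,\ldots,m$, we conclude $y_0\in F(x_0)$, which establishes condition $(ii)$ and hence the closedness of $F$.

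I do not expect any serious obstacle here: the statement is essentially a direct consequence of the sequential characterization of closedness combined with the definition of lower semicontinuity of a real-valued function on a metric space. The only minor point worth being careful about is that lower semicontinuity must be understood jointly in $(x,y)$ (which is exactly how it is stated in the hypothesis $g_i:X\times Y\to\R$ lower semicontinuous), and that one uses the elementary fact that if a sequence of reals is bounded above by $0$, then its $\liminf$ is bounded above by $0$ — there is no need for the sequence to converge. No compactness or value-closedness assumptions are needed for this direction, which is why the result is stated as plainly as it is.
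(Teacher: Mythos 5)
Your proof is correct and follows exactly the same route as the paper: pick a convergent sequence in $\gph F$, apply the lower semicontinuity of each $g_i$ to get $g_i(x_0,y_0)\leq\liminf_n g_i(x_n,y_n)\leq 0$, and conclude $y_0\in F(x_0)$. The extra remarks on joint lower semicontinuity and on the $\liminf$ of a sequence bounded above by $0$ are accurate but only make explicit what the paper leaves implicit.
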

\begin{proof}
    Chose $(x_n,y_n)\subset \gph F$ with $(x_n,y_n)\to (x_0,y_0)\in X\times Y$. Then, for each $i\in\{1,\ldots,m\}$
    \[
    g_i(x_0,y_0) \leq \liminf_{n\to \infty} g_i(x_n,y_n) \leq 0.
    \]
    Thus, $y_0 \in F(x_0)$, finishing the proof.
\end{proof}

Usually, upper semicontinuity is deduced using Proposition~\ref{ch03:prop:closednessConstrained} and some compactness assumption, either on the set $Y$, or on the set $\bigcup_{x\in X}\{ y\,:\, g_i(x,y)\leq 0\}$, for some $i\in\{1,\ldots,m\}$. One might hope that upper semicontinuity (or maybe continuity) of the functions $(g_i\,:\, i=1,\ldots,m)$ might entail lower semicontinuity of $F$ as set-valued map. However, this is not the case in general.

\begin{example}[Constrained map failing lower semicontinuity] Consider the set $X = [0,\pi/2]$ and $Y=\R^2$, and define $F:X\tto Y$ given by
\[
F(\theta) = \left\{ y=(y_1,y_2) \,:\, \begin{array}{l}
     y_1\geq -1\\
     y_2\leq 0\\
     \sin(\theta)y_1 \leq \cos(\theta)y_2
\end{array}  \right\}.
\]
Note that, for $0<\theta<\pi/2$, $F(\theta)$ is given by the compact triangle between the lines $y_1 = -1$, $y_2=0$ and $y_2 = \tan(\theta) y_1$. For $\theta = \pi/2$, $F(\pi/2)$ is given by the (unbounded) box $[-1,0]\times (-\infty,0]$. Finally, at $\theta = 0$, $F(0)$ coincides with the (unbounded) ray $[-1,+\infty)\times \{0\}$. See Figure~\ref{ch03:fig:Constrained-Not-LSC}.

\begin{figure}[ht]
    \centering
    \includegraphics[width=0.9\linewidth]{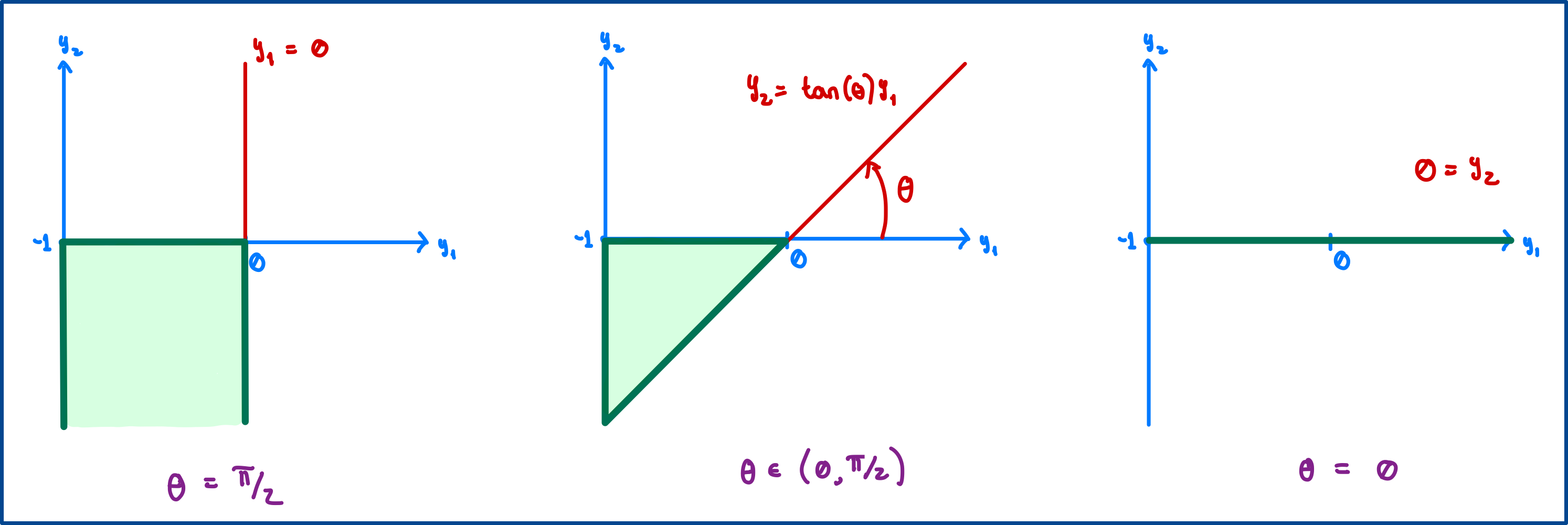}
    \caption{Illustration of set-valued map $F:[0,\pi/2]\tto \R^2$.}
    \label{ch03:fig:Constrained-Not-LSC}
\end{figure}

While $F$ us upper semicontinuous and even continuous on $(0,\pi/2]$, is is clear that it fails to be lower semicontinuous at $0$, due to the ``sudden explosion'' of the $y_1$-axis.
\end{example}

The next theorem shows that when all constrains are linear, the constrained map verifies lower semicontinuity. The proof relies on structural properties of linear programming, which we quickly recall in the following notes. The interested reader is referred to \citep*{Bertsimas1997Introduction}.

\begin{note}{Note: Existence of solutions for linear programming}
    Consider the problem
    \[
    \begin{cases}
        \displaystyle\min_{x\in \R^p}& \langle c,x\rangle\\
        \text{s.t} & Ax \leq b.
    \end{cases}
    \]
    Then, one and only one of the following statements hold:
    \begin{enumerate}
        \item The problem is infeasible (and the value of the problem is set as $+\infty$).
        \item The problem is unbounded (and the value of the problem is set as $-\infty$).
        \item The problem admits a solution $x^*$ (and the value of the problem is given by $\langle c,x^*\rangle$). 
    \end{enumerate}
    Moreover, if the problem admits a solution and the set $P=\{x\,:\, Ax\leq b\}$ has at least one extreme point, then $\langle c,\cdot\rangle$ attains its minimum at $\ext(P)$.
\end{note}

\begin{note}{Note: Representability of Polyhedra}
    Consider the polyhedron $P = \{x \in \R^p\, :\, Ax\leq b \}$. The \textbf{recession cone} of $P$ is given by
    \[
    \mathrm{rec}(P) = \{d\in\R^p\,:\, Ad\leq 0\}.
    \]
    Equivalently, the recession cone is given by all directions $d\in\R^p$ such that $P+d\subset P$. If $\ext(P)$ is nonempty, then we have that
    \[
    P = \co(\ext(P)) + \mathrm{rec}(P),
    \]
    Note that, since $\ext(P)$ is a finite set, then $\co(\ext(P))$ is compact.
\end{note}

\begin{note}{Note: Duality}
    Consider the (primal) problem
    \[
    (P) = \begin{cases}
        \displaystyle\min_{x\in \R^p}& \langle c,x\rangle\\
        \text{s.t} & Ax \geq b.
    \end{cases}
    \]
    Then, its associated dual is given by
    \[
    (D) = \begin{cases}
        \displaystyle\max_{y\in \R^m}& \langle b,y\rangle\\
        \text{s.t} & A^{\top}y = c,\, y\geq 0.
    \end{cases}
    \]
    Then, for any pair $(x,y)$ such that $x$ and $y$ are \textbf{feasible} for the primal and the dual respectively, one has 
    \[
    \langle b,y\rangle \leq \langle c,x\rangle.\tag{weak duality}
    \]
    Moreover, if $(x^*,y^*)$ are \textbf{optimal} solutions of the primal and the dual, respectively, then one has
    \[
    \langle b,y^*\rangle = \langle c,x^*\rangle.\tag{strong duality}
    \]
\end{note}

\begin{theorem}[Lower semicontinuity of linear constrained map]\label{ch03:thm:LSC-of-Linear}
    Let $F:X\subset \R^p\tto \R^q$ given by
    \[
    F(x) = \{ y\in\R^p\ :\ Ax + By \leq b \},
    \]
    where $A\in\R^{m\times p}$, $B \in \R^{m\times q}$ and $b\in\R^m$. Suppose that $X\subset \{x\in\R^p\ :\ \exists y\in \R^q,\, Ax+By\leq b\}$ (i.e., $F$ has nonempty values). Then, $F$ is lower semicontinuous.
\end{theorem}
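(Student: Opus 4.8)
The plan is to prove lower semicontinuity via the sequential characterization of Theorem~\ref{ch03:thm:seq-LSC}: fix $x_0\in X$, a sequence $x_n\to x_0$ (with $x_n\in X$, so each $F(x_n)\neq\emptyset$), and a point $y_0\in F(x_0)$; I must produce $y_n\in F(x_n)$ with $y_n\to y_0$. The natural candidate for $y_n$ is the point of $F(x_n)$ closest to $y_0$, i.e.\ $y_n\in\argmin\{\|y-y_0\|:Ax_n+By\leq b\}$, which exists because $F(x_n)$ is a nonempty closed convex set. Everything then reduces to showing $\mathrm{dist}(y_0,F(x_n))\to 0$. Equivalently, writing $d_n:=\mathrm{dist}(y_0,F(x_n))$, I want $\limsup_n d_n=0$.

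First I would recast $d_n$ as the optimal value of a linear program. Minimizing $\|y-y_0\|_\infty$ (or $\|y-y_0\|_1$) subject to $Ax_n+By\leq b$ is a linear program in $(y,t)$: minimize $t$ subject to $By\leq b-Ax_n$, $-t\mathbf{1}\leq y-y_0\leq t\mathbf{1}$. Since $x_n\in X$ this LP is feasible, and it is bounded below by $0$, so by the existence-of-solutions note it has an optimal solution with value $d_n\geq 0$ (up to the fixed equivalence constant between $\|\cdot\|_\infty$ and the Euclidean norm, which does not affect the convergence argument). Now pass to the linear-programming dual of this LP. The dual feasible region does \emph{not} depend on $n$ (the constraint $x_n$ enters only the objective of the dual, through the right-hand side $b-Ax_n$ of the primal), so it is a fixed polyhedron $Q$; by strong duality, $d_n=\max\{\langle b-Ax_n,\,\cdot\rangle\text{-type linear functional}\ :\ \text{dual variable}\in Q\}$.

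The key step is then: because $x_0\in X$ we have $d_0=\mathrm{dist}(y_0,F(x_0))=0$ (indeed $y_0\in F(x_0)$), so the dual maximum at $x_0$ equals $0$; and if $Q$ has at least one extreme point, the dual optimum is attained at one of the finitely many vertices of $Q$. Writing the dual value as $\max_{v\in V}\ell_v(x_n)$ over the finite vertex set $V$, where each $\ell_v$ is an affine (hence continuous) function of $x$ via the right-hand side, continuity gives $\ell_v(x_n)\to\ell_v(x_0)$ for each of the finitely many $v$, hence $d_n=\max_{v\in V}\ell_v(x_n)\to\max_{v\in V}\ell_v(x_0)=d_0=0$. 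Thus $y_n\to y_0$ and $y_n\in F(x_n)$, which is exactly condition (ii) of Theorem~\ref{ch03:thm:seq-LSC}.

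I expect the main obstacle to be the bookkeeping around the dual polyhedron $Q$ — making sure it is nonempty (so strong duality applies and $d_n$ is genuinely the finite dual value) and that one may reduce to its extreme points. To guarantee $Q$ has an extreme point one can, if necessary, work in a standard-form reformulation of the primal LP (splitting free variables and adding slacks), so that the dual automatically lives in a space where the feasible set contains no lines; alternatively, one can argue directly that a \emph{minimal-norm} selection $y_n$ is bounded — if $\|y_n-y_0\|\to\infty$ along a subsequence, normalizing $(y_n-y_0)/\|y_n-y_0\|$ yields a limit direction $d$ with $Bd\leq 0$, and then $y_0+d\cdot t$ would eventually witness a far smaller distance, contradicting minimality — which confines $y_n$ to a compact set and lets one extract a convergent subsequence whose limit lies in $F(x_0)$ by Proposition~\ref{ch03:prop:closednessConstrained}, forcing the limit to be $y_0$ and hence $d_n\to 0$. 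Either route handles the degeneracy; the linear structure (finitely many vertices, affine dependence on $x$) is what makes lower semicontinuity hold here even though it fails for general nonlinear constraints.
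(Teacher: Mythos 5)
Your main argument is correct, and it takes a genuinely different route from the notes. The notes argue by contradiction: assuming $\limsup_n d(\bar y,F(x_n))>\delta$, they produce separating unit vectors $u_n$, dualize the family of linear programs $\min\{\langle u_n,z\rangle : z\in F(x_n)\}$ (whose \emph{objectives} vary with $n$), and must then work to bound the dual solutions via the decomposition $Q=\hat Q+\mathrm{rec}(Q)$ before passing to the limit and contradicting strong duality. You instead dualize the distance program itself: since only the right-hand side $b-Ax$ depends on $x$, the dual feasible polyhedron $Q$ is fixed, and strong duality plus reduction to the (finitely many) extreme points of $Q$ exhibits $x\mapsto\mathrm{dist}_\infty(y_0,F(x))$ as a finite maximum of affine functions of $x$ on $X$ — hence continuous, hence $d_n\to d_0=0$. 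This is cleaner (no contradiction, no separation, no compactness of the sphere) and proves a stronger, reusable fact (piecewise-affine dependence of the distance on the parameter, in the spirit of Hoffman-type bounds). Your worry about $Q$ lacking extreme points is easily dispatched without reformulation: the dual of an inequality-form primal with free variables carries the sign constraint $w\geq 0$ (as in the duality note of the text), so $Q$ lies in the nonnegative orthant, contains no lines, and has a vertex whenever it is nonempty — which it is, by strong duality, since the primal attains its (finite, nonnegative) optimum.

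One caution: the \emph{alternative} fallback you sketch does not work as stated. Boundedness of the nearest-point selections $y_n\in\mathrm{Proj}(y_0,F(x_n))$ plus closedness of $\gph F$ (Proposition~\ref{ch03:prop:closednessConstrained}) only yields that a cluster point $y^\ast$ of $(y_n)$ lies in $F(x_0)$; nothing forces $y^\ast=y_0$, so you cannot conclude $d_n\to 0$ this way. Indeed, in the triangle example of Subsection~\ref{ch02:subsec:ContinuityCOnstrainedSets} the projections of a far point of $F(0)$ onto $F(\theta_n)$ converge to a point of $F(0)$ different from the original one — closedness is an upper-semicontinuity-type property and cannot substitute for the lower bound you need. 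Fortunately your primary, duality-based route does not rely on this fallback.
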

\begin{proof}
   Reasoning by absurd, let us suppose that there exists $\bar{x}\in X$, $\bar{y}\in F(\bar{x})$ and a sequence $x_n\to \bar{x}$ such that
   \[
   \limsup_{n} d(\bar{y},F(x_n))>0.
   \]
   Up to a subsequence, we might assume that there exists $\delta>0$ small enough such that $d(\bar{y},F(x_n))>\delta$ for all $n\in\N$. By the separation theorem, for each $n\in\N$ there exists $u_n\in \Sph_q$ (where $\Sph_q$ denotes the unit sphere of $\R^q$) such that
   \[
   \sup_{z\in B(\bar{y},\delta)}\langle u_n,z\rangle = \langle u_n,\bar{y}\rangle + \delta < \underbrace{\inf\{ \langle u_n,z\rangle\,:\, z\in F(x_n)\}}_{=\nu_n \in \R}.
   \]
Let $\nu = \liminf_n \nu_n \in \overline{\R}$. Again, by taking a subsequence, we might assume that $(\nu_n)$ converges to $\nu$ in $\overline{\R}$. By compactness of $\Sph_q$, we can assume that $u_n\to \bar{u}\in \Sph_q$. Note that
\[
\langle \bar{u},\bar{y}\rangle + \delta = \lim_n \langle u_n,\bar{y}\rangle + \delta \leq \lim_n \nu_n.
\]
Now, let us consider the problems
\[
(P_n) = \begin{cases}
    \min_z &\langle u_n,z\rangle\\
    s.t. & Bz \leq b - Ax_n,
\end{cases}\quad\text{and}\quad (\bar{P}) = \begin{cases}
    \min_z &\langle \bar{u},z\rangle\\
    s.t. & Bz \leq b - A\bar{x},
\end{cases}
\]
Then, the optimal value of problem $(P_n)$ is $\nu_n$, and the optimal value of $(\bar{P})$, that we denote by $\bar{\nu}$, verifies that
\[
\bar{\nu} +\delta \leq \langle \bar{u},\bar{y}\rangle \leq \lim \nu_n.
\]
Now, let us look at the dual problems of $(P_n)$ and $(\bar{P})$, which are respectively
\[
(D_n) = \begin{cases}
    \max_w &\langle Ax_n-b,w\rangle\\
    s.t. & -B^{\top}w = u_n, w\geq 0,
\end{cases}\quad\text{and}\quad (\bar{D}) = \begin{cases}
    \max_w &\langle A\bar{x}-b,w\rangle\\
    s.t. & -B^{\top}w = \bar{u}, w\geq 0.
\end{cases}
\]
Strong duality implies that every $(D_n)$ admits at least one solution $w_n$, and it verifies that
$\langle b- Ax_n,w_n\rangle = \nu_n$. We would like to show that $(w_n)$ converges (up to a subsequence) to a point $\bar{w}$ that is a solution of $(\bar{D})$. The obstruction is that the sequence $(w_n)$ might be unbounded.

Let us consider the polyhedron 
\begin{align*}
  Q &= \{ (u,w)\ :\ B^{\top}u = w,\, w\geq 0,\, \|u\|_{\infty}\leq 1\}\\
  &= \{ (u,w)\ :\ B^{\top}u = w,\, w\geq 0,\, -1\leq u\leq 1\}.
\end{align*}

Note that the recession cone of $Q$ must verify the inclusion $\mathrm{rec}(Q)\subset \{0\}\times \R^m$. Indeed, if $d = (d_1,d_2)\in\mathrm{rec}(Q)$ the constraints $\|u\|_{\infty}\leq 1$ entail that $d_1 \leq 0$ and that $-d_1\leq 0$. Since both variables $u$ and $w$ are bounded from bellow ($w\geq 0$ and $u\geq -1$), then $\ext(Q)\neq\emptyset$. So,
\[
Q = \hat{Q} + \mathrm{rec}(Q),
\]
where $\hat{Q} = \cco(\ext(Q))$. Now, let us consider the set
\[
\mathcal{U} = \{ u \in \R^q\ :\ \exists w\in\R^m, (u,w)\in Q\}.
\]
Then, the sequence $(u_n)$ and the limit $u$ belong to $\mathcal{U}$. Moreover, each $(u_n,w_n)\in Q$ and therefore,
\[
w_n \in \hat{w}_n + d_n,
\]
where $(u_n,\hat{w}_n)\in \hat{Q}$ and $(0,d_n)\in \mathrm{rec}(Q)$. Now, since $\mathrm{rec}(Q)$ is a cone, necessarily we need that $\langle b-Ax_n,d_n\rangle\leq 0$. Otherwise, the problem $(D_n)$ would be unbounded, which would be a contradiction. Thus, $\hat{w}_n$ must be a solution of $(D_n)$ as well. Then, by replacing $(w_n)$ by $(\hat{w}_n)$ if necessary, and since $\hat{Q}$ is bounded (it is the convex hull of the finite set $\ext(Q)$), we get that $(w_n)$ is bounded.

Now, passing again by a subsequence if necessary, we can assume that $(u_n,w_n) \to (\bar{u},\bar{w})\in \hat{Q}$. Then, using strong duality
\[
\bar{\nu}\geq \langle A\bar{x}-b,\bar{w}\rangle = \lim_n \langle Ax_n-b,w_n\rangle = \lim\nu_n \geq \bar{\nu} + \delta.
\]
This is a contradiction, and so $F$ must be lower semicontinuous.
\end{proof}

%%%%%%%%%%%%%%%%%%%%%%%%%%%%%%%%
%%%%%%%%%%%%%%%%%%%%%%%%%%%%%%%%
\section{Existence of solutions in Bilevel Programming}

The existence of solutions of a bilevel programming problem (optimistic or pessimistic) can be reduced to study the continuity properties of functions $\varphi^o$ and $\varphi^p$, defined in formulations \eqref{ch02:eq:BilevelProgramming-Optimistic-phiO} and \eqref{ch02:eq:BilevelProgramming-Pessimistic-phip}, respectively. Indeed, recall that we consider
\begin{enumerate}
    \item The leader's objective function $\theta_l:\R^p\times\R^q\to \R$.
    \item The leader's feasible region $X\subset\R^p$.
    \item The follower's objective function $\theta_f:\R^p\times\R^q\to \R$.
    \item The follower's constraint set-valued map $K: X\tto \R^q$.
\end{enumerate}
We will consider now an \textbf{ambient space} $Y\subset\R^q$ for the follower's decision, verifying that $Y$ is closed and $K(X)\subset Y$. Thus, we will write $K:X\tto Y$ to explicit it. Then, the solution map $S:X\tto Y$ is given by $S(x) := \argmin_y\{ \theta_f(x,y)\,:\, y\in K(x)  \}$, and 
\begin{align*}
    \varphi^o(x) &= \inf_{y\in S(x)} \theta_l(x,y),\\
    \varphi^p(x) &= \sup_{y\in S(x)} \theta_l(x,y).
\end{align*}
Thus, the key observation is that we can apply Weierstrass theorem as follows:
\begin{enumerate}
    \item \textbf{Optimistic problem:} If $X$ is compact and $\varphi^o$ is lower semicontinuous, then the optimistic problem $\displaystyle\min_{x\in X}\varphi^o(x)$ (see Definition \ref{ch02:def:BilevelProgramming-Optimistic}) admits a solution.
    \item \textbf{Pessimistic problem:} If $X\cap \dom S$ is compact and $\varphi^p$ is lower semicontinuous, then the pessimistic problem $\displaystyle\min_{x\in X\cap\dom S}\varphi^o(x)$ (see Definition \ref{ch02:def:BilevelProgramming-Pessimistic}) admits a solution.
\end{enumerate}
For the optimistic problem, this observation lead us to the following theorem.

\begin{theorem}[Existence of solutions for Optimisitc Bilevel programming]\label{ch03:thm:Existence-Optimistic} Consider the optimistic bilevel programming problem
\[
\left\{\begin{array}{rl}
    \min_{x,y} & \theta_l(x,y)  \\
     s.t. & x\in X, y\in S(x),   
\end{array}\right.
\]
 where  $S(x) := \argmin_y\{ \theta_f(x,y)\,:\, y\in K(x)  \}$. Assume that 
 \begin{itemize}
     \item[(i)] $\theta_l$ is lower semicontinuous.
     \item[(ii)] $\theta_f$ is continuous semicontinuous.
     \item[(iii)] $X$ is nonempty and compact.
     \item[(iv)] $K:X\tto Y$ is both upper and lower semicontinuous, and has nonempty compact closed values.
 \end{itemize}
   Then, the problem admits a solution.
\end{theorem}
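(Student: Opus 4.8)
The plan is to reduce the problem to Weierstrass' theorem applied to the upper-level value function $\varphi^o(x) = \inf_{y\in S(x)}\theta_l(x,y)$, so that the real work is to show that $\varphi^o$ is finite-valued and lower semicontinuous on the nonempty compact set $X$.

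First I would analyze the lower-level solution map $S$. Since $K$ has nonempty compact values and $\theta_f(x,\cdot)$ is continuous, Weierstrass' theorem gives $S(x)\neq\emptyset$ for every $x\in X$; moreover $S(x)$ is a closed subset of $K(x)$ (being the preimage of the minimal value under the continuous map $\theta_f(x,\cdot)$), hence compact. Because $\theta_f$ is continuous and $K:X\tto Y$ is continuous with nonempty compact values, the second form of the Berge Maximum theorem (Theorem~\ref{ch03:thm:Berge-secondForm}) applies and yields that $S$ is upper semicontinuous. In particular $\dom S = X$, so $X\cap\dom S=X$ is compact.

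Next I would establish lower semicontinuity of $\varphi^o$. Since $S(x)$ is nonempty and compact and $\theta_l(x,\cdot)$ is lower semicontinuous, the infimum defining $\varphi^o(x)$ is attained, and in particular $\varphi^o$ is real-valued. Writing $-\varphi^o(x) = \sup_{y\in S(x)}\big(-\theta_l(x,y)\big)$ and noting that $-\theta_l$ is upper semicontinuous, I would apply the first form of the Berge Maximum theorem (Theorem~\ref{ch03:thm:Berge-firstForm}, second item) with marginal function $g=-\varphi^o$, data $f=-\theta_l$, and the set-valued map $S$: as $S$ is upper semicontinuous with nonempty compact values and $-\theta_l$ is upper semicontinuous, $g$ is upper semicontinuous, i.e.\ $\varphi^o$ is lower semicontinuous. (Alternatively one could reprove this directly by a sequential argument mimicking the second part of the proof of Theorem~\ref{ch03:thm:Berge-firstForm}, using upper semicontinuity of $S$ and compactness of its values to extract a convergent subsequence of selections $y_n\in S(x_n)$.)

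Finally, Weierstrass' theorem applied to the lower semicontinuous function $\varphi^o$ on the nonempty compact set $X$ produces $x^\ast\in\argmin_{x\in X}\varphi^o(x)$; since $S(x^\ast)$ is nonempty compact and $\theta_l(x^\ast,\cdot)$ is lower semicontinuous, I pick $y^\ast\in S(x^\ast)$ with $\theta_l(x^\ast,y^\ast)=\varphi^o(x^\ast)$. Then $(x^\ast,y^\ast)$ is feasible, and for every feasible $(x,y)$ one has $\theta_l(x,y)\ge\varphi^o(x)\ge\varphi^o(x^\ast)=\theta_l(x^\ast,y^\ast)$, so $(x^\ast,y^\ast)$ is a solution. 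The main obstacle is the lower semicontinuity of $\varphi^o$: the subtlety is that the ``$\inf$ over the reaction set'' must be rewritten as a ``$\sup$ over the reaction set'' of $-\theta_l$ so that the upper-semicontinuity branch of Berge's first form applies, and this branch crucially requires both the upper semicontinuity of $S$ (obtained from the second form of Berge, which is precisely why full continuity of $K$, not merely closedness, is assumed in (iv)) and the compactness of the values $S(x)$.
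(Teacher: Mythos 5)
Your proposal is correct. It does not follow the paper's primary proof, which never passes through the value function $\varphi^o$: there, one shows directly that $\gph S=\{(x,y): x\in X,\ y\in S(x)\}$ is compact (closedness from the upper semicontinuity of $S$ obtained via the second form of Berge's theorem, plus a subsequence extraction using the compactness of $X$ and of the values $S(\bar x)$, the latter giving a compact set $C=S(\bar x)+\Ball$ that eventually traps the $y_n$), and then applies Weierstrass to the lower semicontinuous $\theta_l$ over this compact graph. Your route --- rewriting $-\varphi^o(x)=\sup_{y\in S(x)}\bigl(-\theta_l(x,y)\bigr)$ and invoking the upper-semicontinuity branch of the first form of Berge's Maximum theorem to get lower semicontinuity of $\varphi^o$, then Weierstrass on $X$ --- is precisely the alternative proof the paper sketches immediately after its main argument, so every step you use is validated by the surrounding text. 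The two proofs buy essentially the same thing (both hinge on $S$ being upper semicontinuous with nonempty compact values and on the compactness of $X$); the graph-compactness argument is more self-contained and makes the geometric picture explicit, while your value-function argument is shorter, delegates the analytic work to Theorem~\ref{ch03:thm:Berge-firstForm}, and has the added benefit of recording the lower semicontinuity of $\varphi^o$ itself, which is the quantity one actually optimizes in the leader-only formulation. Your closing remark correctly identifies why continuity (not mere closedness) of $K$ is needed in $(iv)$.
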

\begin{proof}
    Note that hypotheses $(ii)$ and $(iv)$ entail that the solution map $S:X\tto Y$ has nonempty compact values. Using the second form of Berge Maximum theorem (see Theorem \ref{ch03:thm:Berge-secondForm}, we get that $S:X\tto Y$ is upper semicontinuous, which entails that it is closed. Then,
    \[
    \gph S = \{ (x,y)\ :\ x\in X,\, y\in S(x)\}
    \]
    is closed. Now, let us show that $\gph S$ is in fact, compact. Let $(x_n,y_n)$ be a sequence in $\gph S$. Since $X$ is compact by hypothesis $(iii)$, there exists a convergent subsequence $(x_{n_k})_k$ of $(x_n)$, converging to some point $\bar{x}\in X$. Let $C = S(\bar{x}) + \Ball$. Since $S(\bar{x})$ is compact, we get that $C$ is compact as well. Moreover, due to the upper semicontinuity of $S$, we get that $y_{n_k}\in C$ for $k\in\N$ large enough. Thus, up to a subsequence, we can suppose that $y_{n_k}$ converges to a point $\bar{y} \in Y$. closedness of $\gph S$ yields that $(\bar{x},\bar{y})\in \gph S$. This proves that $\gph S$ is compact. Since $\theta_l$ is lower semicontinuous, the result follows.
\end{proof}

An alternative proof of Theorem  \ref{ch03:thm:Existence-Optimistic} is to note that 
\[
-\varphi^o(x) = \sup_{y\in S(x)} -\theta_l(x,y).
\]
Then, hypotheses $(ii)$ and $(iv)$ yields that $S:X\tto Y$ is upper semicontinuous with nonempty compact values, and hypothesis $(i)$ yields that $-\theta_l$ is upper semicontinuous. Then, we can apply the  first form of Berge maximum theorem (see Theorem \ref{ch03:thm:Berge-firstForm}) to deduce that $-\varphi^o$ is upper semicontinuous. Thus, $\varphi^o$ is lower semicontinuous and the compactness of $X$ leads to the conclusion.

Both proofs rely on the same intrinsic aspect of the proof: hypotheses $(ii)-(iii)-(iv)$ yield that the graph of $S$ is compact, and optimistic bilevel optimization can be reduced to optimize both $x$ and $y$ over the graph of $S$. 

\begin{warning}
    The most demanding hypothesis of Theorem \ref{ch03:thm:Existence-Optimistic} is the lower semicontinuity of the constraint map $K:X\tto Y$. However, as we already have discussed above, this holds in two canonical settings:
    \begin{enumerate}
        \item When $K$ is given as the slices of a compact polytope. That is, $X = \{x\, :\, A_l x\leq b_l\}$ and for each $x\in X$,  $K(x) = \{y\,:\, A_fx+B_fy\leq b_f\}$ where \[ 
        D = \{ (x,y) \,:\, A_l x\leq b_l, A_fx+B_fy\leq b_f\}
        \]
        is compact.
        \item When $K$ is constant. That is, $K(x) = Y$ for every $x\in X$.
    \end{enumerate}
    Other formulations would require to analyze the lower semicontinuity of $K$ carefully, or to find a way around to deduce the compactness of $\gph S$. 
\end{warning}

The above discussion does not longer holds in the case in pessimistic bilevel programming. Indeed, Example \ref{ch02:example:MultipleSolutionsFollower} already illustrates the difference: $\gph S$ might be compact and $\theta_l$ might be even continuous, but the pessimistic bilevel problem can fail to have any solutions. The key obstruction is hidden in the first form of Berge Maximum theorem (see Theorem \ref{ch03:thm:Berge-firstForm}). We need the function
\[
\varphi^p(x) = \sup_{y\in S(x)} \theta_l(x,y)
\]
to be lower semicontinuous which requires, at least in the theorem, \textbf{lower semicontinuity of the solution map $S$}. In what follows, we will present two settings where the desired semicontinuity holds: the linear case, and the regularized case. The following result seems to be traced back to \citep*{Lucchetti1987Existence}.

\begin{theorem}[Existence results for linear pessimistic bilevel programming]\label{ch03:thm:Existence-Pessimistic-Linear} Consider the pessimistic bilevel programming problem
    \[
    \min_{x\in X}  \left(\sup_{y\in S(x)}\theta_l(x,y)\right), 
\]
 where  $S(x) := \argmin_y\{ \theta_f(x,y)\,:\, y\in K(x)  \}$. Assume that 
 \begin{itemize}
     \item[(i)] $\theta_l$ is lower semicontinuous.
     \item[(ii)] $\theta_f$ is linear. That is, $\theta_f(x,y) = \langle c,y\rangle$.
     \item[(iii)] $X$ is nonempty and $\dom K = X$.
     \item[(iv)] $K:X\tto Y$ is given by $K(x) = \{y\,:\, Ax + By \leq b\}$, and $\gph K$ is compact.
 \end{itemize}
   Then, the problem admits a solution.
\end{theorem}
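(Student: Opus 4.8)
The plan is to reduce existence of a pessimistic solution to Weierstrass' theorem applied to $\varphi^p$ over $X$, which needs two things: that $X\cap\dom S$ is nonempty and compact, and that $\varphi^p$ is lower semicontinuous. The latter, through the first form of the Berge Maximum theorem (Theorem~\ref{ch03:thm:Berge-firstForm}, item~1), will follow once $S$ is known to be lower semicontinuous, so the whole argument hinges on that single property.

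First I would dispatch the structural bookkeeping. Since $\gph K=\{(x,y):Ax+By\le b\}$ is compact, its projection onto $\R^p$ is compact; but that projection is precisely $\dom K=X$, so $X$ is nonempty and compact. For each $x\in X$, $K(x)$ is a closed slice of the compact set $\gph K$, hence compact, and nonempty because $x\in\dom K$; as $\theta_f=\langle c,\cdot\rangle$ is continuous, Weierstrass gives $S(x)\neq\emptyset$ and $S(x)$ compact, so $\dom S=X$ and $X\cap\dom S=X$ is nonempty and compact. Moreover $K$ is closed (Proposition~\ref{ch03:prop:closednessConstrained}) and locally precompact (since $\overline{K(U)}\subset K(X)=\mathrm{proj}_{\R^q}(\gph K)$ is compact), hence upper semicontinuous by Corollary~\ref{ch03:cor:USC-ClosedGraph}; it is lower semicontinuous by Theorem~\ref{ch03:thm:LSC-of-Linear}. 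Thus $K$ is continuous with nonempty compact values, and the second form of the Berge Maximum theorem (Theorem~\ref{ch03:thm:Berge-secondForm}) shows that the value function $v(x):=\min_y\{\langle c,y\rangle:y\in K(x)\}$ is continuous.

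The core step is lower semicontinuity of $S$. The key observation is that, since $v(x)$ is the optimal value,
\[
S(x)=\{\,y\in\R^q:\;By\le b-Ax,\;\langle c,y\rangle\le v(x)\,\},
\]
so $S$ is a linearly constrained map with the \emph{fixed} constraint matrix $\binom{B}{c^{\top}}$ and a right-hand side $\binom{b-Ax}{v(x)}$ depending on $x$ only through the continuous maps $x\mapsto b-Ax$ and $x\mapsto v(x)$. I would then re-run the proof of Theorem~\ref{ch03:thm:LSC-of-Linear} verbatim for this map: that proof argues by contradiction, strictly separates a ball around a putative ``lost'' point $\bar y\in S(\bar x)$ from $S(x_n)$ for $x_n\to\bar x$, passes to the parametric linear programs $\min_z\langle u_n,z\rangle$ s.t.\ $Bz\le b-Ax_n$, $\langle c,z\rangle\le v(x_n)$ and their duals, and controls the (possibly unbounded) dual optima via the recession cone of a polyhedron that depends only on the fixed matrix $\binom{B}{c^{\top}}$. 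Inspecting that proof, the right-hand side enters only through its convergence as $n\to\infty$; affineness in $x$ is never used. Since here $\binom{b-Ax_n}{v(x_n)}\to\binom{b-A\bar x}{v(\bar x)}$ by continuity of $v$, the same chain of inequalities closes with $\bar\nu\ge\lim_n\nu_n\ge\bar\nu+\delta$, a contradiction; hence $S$ is lower semicontinuous. (Equivalently, invoke a Hoffman-type error bound $\mathrm{dist}(y,S(x))\le L\,(\langle c,y\rangle-v(x))$ for $y\in K(x)$, with $L$ depending only on $B$ and $c$, and combine it with lower semicontinuity of $K$ and continuity of $v$.)

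With $S$ lower semicontinuous and nonempty-valued and $\theta_l$ lower semicontinuous, the first form of the Berge Maximum theorem (Theorem~\ref{ch03:thm:Berge-firstForm}, item~1) gives that $\varphi^p(x)=\sup_{y\in S(x)}\theta_l(x,y)$ is lower semicontinuous on $X$; since $X=X\cap\dom S$ is nonempty and compact, Weierstrass' theorem gives that $\min_{x\in X}\varphi^p(x)$ is attained, i.e.\ the pessimistic problem has a solution. The genuinely delicate point is the lower semicontinuity of $S$: qualitative continuity of the data is not enough (Example~\ref{ch02:example:MultipleSolutionsFollower}), and one must exploit the linear lower level through LP duality — the recession-cone control in the proof of Theorem~\ref{ch03:thm:LSC-of-Linear}, or equivalently Hoffman's bound — together with continuity of the lower-level value function.
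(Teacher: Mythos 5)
Your proposal is correct, and its outer skeleton coincides with the paper's: compactness of $X$ from (iii)--(iv), reduction via Weierstrass and the first form of Berge's Maximum theorem to the lower semicontinuity of $S$, and the rewriting of $S(x)$ as the polyhedral map $\{y:\ By\le b-Ax,\ \langle c,y\rangle\le v(x)\}$, where the continuity of the value function $v$ comes from the second form of Berge's theorem. Where you genuinely diverge is in the one delicate step, the lower semicontinuity of this reformulated $S$. The paper's argument is geometric and self-contained: it reduces to $y_0\in\rint(S(x_0))$ (using that the set of points where the $\limsup$ of distances vanishes is closed and $S(x_0)$ is convex), introduces the active index set at $(x_0,y_0)$, takes arbitrary $y_n\in S(x_n)$ converging to some $\bar y\in S(x_0)$, and shows that the translates $\hat y_n=y_n-\bar y+y_0$ lie in $S(x_n)$ for large $n$ --- active constraints are preserved because $\langle c_i,\cdot\rangle$ is constant on the segment $[y_0,\bar y)\subset\rint(S(x_0))$, inactive ones by strict inequality and continuity. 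You instead observe that the duality/recession-cone proof of Theorem~\ref{ch03:thm:LSC-of-Linear} uses the affine right-hand side $b-Ax_n$ only through its convergence to $b-A\bar x$, so it applies verbatim to the augmented system obtained by appending the row $c^{\top}$ to $B$ and the continuous entry $v(x)$ to $b-Ax$; this is a legitimate (and easily checked) strengthening of that theorem, as is the Hoffman-bound variant you mention, though Hoffman's inequality is not developed in these notes. Your route buys a reusable lemma --- lower semicontinuity of polyhedral maps with fixed matrix and merely continuous right-hand side --- at the cost of re-auditing an LP-duality proof; the paper's translation argument is more elementary and stays within the convex-geometric tools already on the table, but is tailored to this specific situation. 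Both arguments are complete.
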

\begin{proof}
    Note that hypothesis $(iii)$ and $(iv)$ entail that $X$ is compact. Thus, it is enough to show that $\varphi^p(x) = \sup_{y\in S(x)}\theta_l(x,y)$ is lower semicontinuous. Thanks to the first form of Berge maximum theorem (see Theorem \ref{ch03:thm:Berge-firstForm}) and hypothesis $(i)$, it is enough to show that $S$ is lower semicontinuous. Suppose the contrary. By Theorem~\ref{ch03:thm:seq-LSC}, there exists $x_0\in X$ and $y_0\in S(x_0)$ such that
    \[
    \limsup_{x\to x_0} d(y_0,S(x)) >0.
    \]
    In particular, there exist $\delta>0$ and a sequence $(x_n)\subset X$ converging to $x_0$ such that $d(y_0, S(x_n))\geq \delta$ for all $n\in \N$.
    
    First, note that for every $x\in X$, we can describe $S(x)$ as
    \[
    S(x) = \{ y\,:\, \langle c_i,y\rangle \leq \varphi_i(x), i=0,\ldots,m \},
    \]
    with $\varphi_i:X\to \R$ continuous for every $i = 0,\ldots,m$. Indeed, for $i=1,\ldots,m$, it is enough to take $c_i = B_{i\bullet}^{\top}$ and $\varphi_i(x) = (b- Ax)_i$. For $i=0$, we set $c_i = c$ and $\varphi_0(x) = \min_{y}\{\langle c,y\rangle\,:\, y\in K(x)\}$. The continuity of $\varphi_0$ follows from the second form of Berge maximum theorem (see Theorem \ref{ch03:thm:Berge-secondForm}) and the fact that $K$ is continuous by hypothesis $(iv)$ (readily applying Theorem \ref{ch03:thm:LSC-of-Linear}, Proposition \ref{ch03:prop:closednessConstrained} and Corollary \ref{ch03:cor:USC-ClosedGraph}).

    Observe that, since the set $\{ y\,:\, \limsup_{x\to x_0} d(y_0,S(x)) =0\}$ is closed and $S(x_0)$ is convex, we can assume without losing generality that $y_0 \in \rint(S(x_0))$.  Let us now define the active index set 
    \[
    \mathcal{A} = \mathcal{A}(x_0,y_0) := \{i\,:\, \langle c_i,y_0\rangle = \varphi_i(x_0)\}.
    \]
    Note that for all $y\in \rint(S(x_0))$, we have that $\mathcal{A}(x_0,y) = \mathcal{A}(x_0,y_0)$.
    
    Since $S$ takes nonempty values, we can choose $y_n\in S(x_n)$ for each $n\in \N$. Up to a subsequence, we can assume that $y_n\to \bar{y}$. Clearly, $d(y,y_0) = \lim_n d(y_n,\bar{y}) \geq \delta$. Noting that $S$ has closed graph (see Proposition \ref{ch03:prop:closednessConstrained}), we know that $\bar{y}\in S(x_0)$. Now, take $\hat{y}_n = y_n - \bar{y} + y_0$. Clearly, $\hat{y}_n \to y_0$. Now, take $i\in\{0,\ldots,m\}$:
    \begin{itemize}
        \item If $i\in \mathcal{A}$, then $\langle c_i, y_0-\bar{y}\rangle = 0$. Indeed, the segment $[y_0, \bar{y})$ is contained in $\rint(S(x_0))$, and so the the function $y\mapsto \langle c_i,y\rangle$ is constant in $[y_0, \bar{y})$. Continuity yields that $\langle c_i, y_0\rangle = \langle c_i, \bar{y}\rangle$. In particular, 
        \[
        \langle c_i, \hat{y}_n\rangle = \langle c_i, y_n\rangle + \underbrace{\langle c_i, y_0-\bar{y}\rangle}_{=0} \leq \varphi_i(x_n).
        \]
        \item If $i\notin \mathcal{A}$, then the function $(x,y)\mapsto \langle c_i,y\rangle - \varphi_i(x)$ is strictly negative at $(x_0,y_0)$. Since $(x_n,\hat{y}_n)\to (x_0,y_0)$, continuity yields that there exists $n_i\in \N$ large enough such that
        \[
        \langle c_i,\hat{y}_n\rangle < \varphi_i(x_n),\quad \forall n\geq n_i.
        \]
    \end{itemize}
    By taking $N = \max\{ n_i\, :\, i\notin \mathcal{A}\}$ (with $N= 1$ if $\mathcal{A} = \{0,\ldots,m\}$), we deduce that
    \[
    \forall n\geq N,\forall i\in\{0,\ldots,m\},\quad \langle c_i,\hat{y}_n\rangle \leq \varphi_i(x_n).
    \]
    That is, for every $n\geq N$, $\hat{y}_n\in S(x_n)$. This yields that 
    \[
    0<\delta \leq \lim d(y_0, S(x_n)) \leq \lim_n d(y_0,\hat{y}_n) = 0,
    \]
    which is a contradiction. The proof is then completed.
\end{proof}

When the exact solution map $S:X\tto Y$ is not lower semicontinuous, one can study a relaxed version of the problem, by admitting $\varepsilon$-optimal solutions. Namely, for a fixed tolerance $\varepsilon>0$, one considers the set-valued map $S_{\varepsilon}:X\tto Y$ given by
\begin{align*}
    S_{\varepsilon}(x) &= \varepsilon\text{-}\argmin_y\{ \theta_f(x,y)\,:\, y\in K(x)\}\\
    &:=\{y\in K(x) \,:\, \theta_f(x,y)\leq \theta_f(x,z) + \varepsilon,\,\forall z\in K(x)\}.
\end{align*}
Usually, $S_{\varepsilon}$ enjoys better continuity properties than $S$.
\begin{example}[Regularizing effect of approximate solution map]\label{ch03:ex:Regular-eps-Argmin} Consider again the solution set of Example \ref{ch02:example:MultipleSolutionsFollower}, that is,
\[
S(x) = \argmin_y\{ -xy\, :\, y\in [0,1]  \} = \begin{cases}
    \{0\}&\text{ if }x<0,\\
    [0,1]&\text{ if } x= 0,\\
    \{1\}&\text{ if } x> 0.
\end{cases}
\]
Fix $\varepsilon>0$. Then, if $x<0$ one has that for $y\in [0,1]$
\[
y\in S_{\varepsilon}(x) \iff -xy \leq \varepsilon \iff y\leq -\frac{\varepsilon}{x}.
\]
Similarly, if $x>0$ one has that for $y\in [0,1]$
\[
y\in S_{\varepsilon}(x) \iff -xy \leq -x+\varepsilon \iff y\geq 1-\frac{\varepsilon}{x}.
\]
Thus,
\[
S_{\varepsilon}(x) = \begin{cases}
    \left[0, \min\left\{1,-\frac{\varepsilon}{x}\right\}\right]&\text{ if }x<0,\\
    [0,1]&\text{ if } x= 0,\\
    \left[\max\left\{0,1-\frac{\varepsilon}{x}\right\},1\right]&\text{ if } x> 0.
\end{cases}
\]
\begin{figure}[ht]
    \centering
    \includegraphics[width=0.9\linewidth]{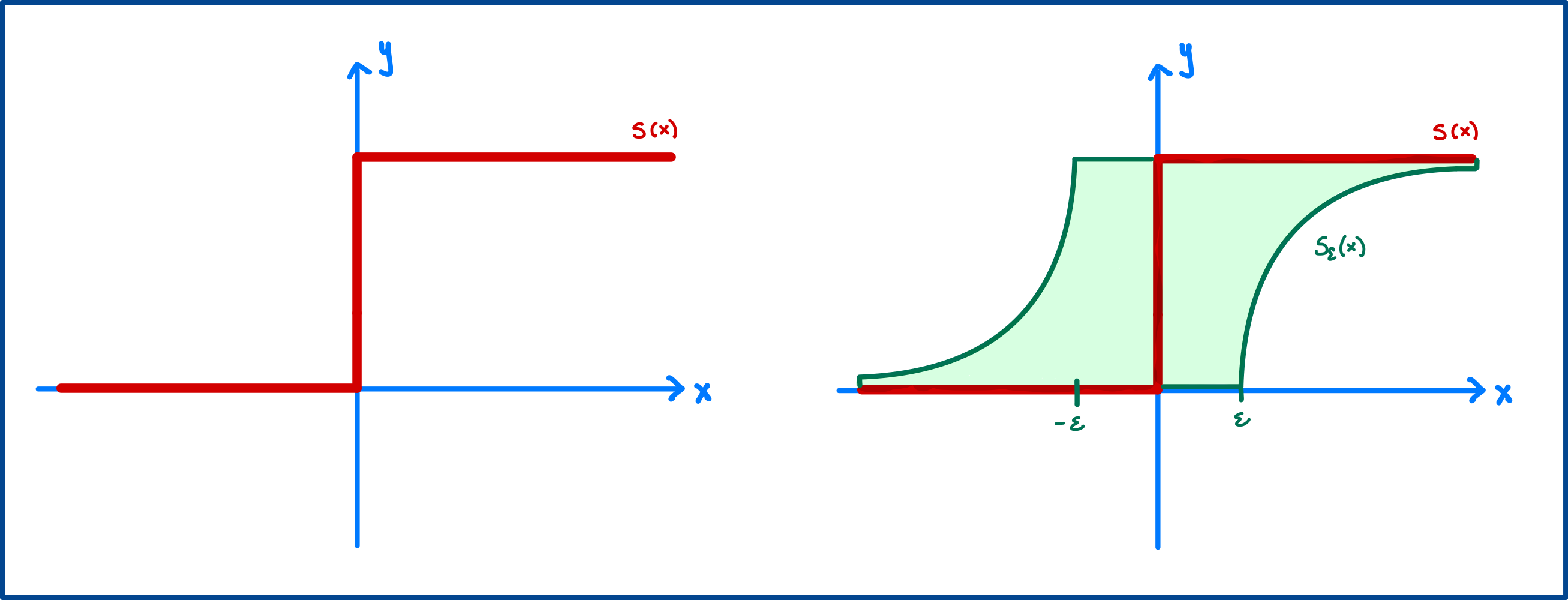}
    \caption{Illustration of $S(x)$ versus $S_{\varepsilon}(x)$.}
    \label{ch03:fig:eps-argmin}
\end{figure}
The graph of $S$ and $S_{\varepsilon}$ are illustrated in Figure \ref{ch03:fig:eps-argmin}. We now can define the functions
\[
u(x) = \begin{cases}
    -\frac{\varepsilon}{x} &\text{ if }x\leq -\varepsilon,\\
    1&\text{ if } x>-\varepsilon,
\end{cases}\quad\text{ and }\quad\ell(x) = \begin{cases}
    0 &\text{ if }x\leq \varepsilon,\\
    1- \frac{\varepsilon}{x}&\text{ if } x>\varepsilon.
\end{cases}
\]
Then, $u$ and $\ell$ are continuous, $\ell\leq u$, and $S_{\varepsilon}(x) = [\ell(x),u(x)]$. We deduce that $S_{\varepsilon}$ is both, upper and lower semicontinuous (see Problem \ref{ch03:problem:Semicontinuity-BetweenTwoFunctions}).
\end{example}

As hinted by Example \ref{ch03:ex:Regular-eps-Argmin}, using $S_{\varepsilon}:X\tto Y$ instead of $S:X\tto Y$ allows us to deduce a new existence result for a broader class of pessimistic problems.

\begin{theorem}[Existence result for regularized pessimistic bilevel programming] 
    Fix $\varepsilon>0$ and consider the pessimistic bilevel programming problem
    \[
    \min_{x\in X}  \left(\sup_{y\in S_{\varepsilon}(x)}\theta_l(x,y)\right), 
\]
 where  $S_{\varepsilon}(x) := \varepsilon\text{-}\argmin_y\{ \theta_f(x,y)\,:\, y\in K(x)  \}$. Assume that 
 \begin{itemize}
     \item[(i)] $\theta_l$ is lower semicontinuous.
     \item[(ii)] $K:X\tto Y$ is both upper and lower semicontinuous, and it has nonempty convex values.
     \item[(iii)] $\theta_l$ is continuous and for every $x\in X$, $\theta_l(x,\cdot)$ is strictly convex.  
     \item[(iv)] $\gph K$ is compact and $X=\dom K$.
 \end{itemize}
   Then, $S_{\varepsilon}$ is lower semicontinuous, and the problem admits a solution.
\end{theorem}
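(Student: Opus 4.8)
The plan is to reduce the claim, just as in the linear case, to two ingredients: lower semicontinuity of the regularized reaction map $S_\varepsilon$, and the first form of Berge's Maximum theorem. The delicate part is the former; once it is in hand, the existence of a minimizer follows immediately from Weierstrass' theorem.

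First I would record the structural facts. Since $\gph K$ is compact (hypothesis (iv)), each slice $K(x)$ is compact and $X = \dom K$, being the projection of $\gph K$ onto $\R^p$, is compact. As $\theta_f$ is continuous and $K$ is continuous with nonempty compact values, the second form of Berge's Maximum theorem (Theorem~\ref{ch03:thm:Berge-secondForm}) shows that the lower-level value $\varphi_f(x) := \min_y\{\theta_f(x,y) : y \in K(x)\}$ is continuous on $X$. Hence $g(x,y) := \theta_f(x,y) - \varphi_f(x)$ is continuous and nonnegative on $\gph K$, and $S_\varepsilon(x) = \{y \in K(x) : g(x,y) \leq \varepsilon\}$; by convexity of $K(x)$ and of $\theta_f(x,\cdot)$ this set is convex and compact, and it is nonempty since it contains the exact solution set $S(x) \neq \emptyset$.

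The core of the proof is to show that $S_\varepsilon$ is lower semicontinuous, and here I would use the positive tolerance as slack, in the spirit of a Slater condition. By strict convexity of $\theta_f(x_0,\cdot)$, the lower-level minimizer $\bar y = \bar y(x_0)$ over $K(x_0)$ is well defined and $g(x_0,\bar y) = 0 < \varepsilon$. Given $x_0 \in X$, $y_0 \in S_\varepsilon(x_0)$ and $\eta > 0$, I would move $y_0$ towards $\bar y$: for $\lambda \in (0,1]$ the point $y_\lambda := (1-\lambda)y_0 + \lambda \bar y$ lies in $K(x_0)$ by convexity and satisfies $g(x_0, y_\lambda) \leq (1-\lambda)\varepsilon < \varepsilon$ by convexity of $\theta_f(x_0,\cdot)$. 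Picking $\lambda$ small enough that $\|y_\lambda - y_0\| < \eta/2$, continuity of $g$ gives a ball around $(x_0,y_\lambda)$ on which $g < \varepsilon$, while lower semicontinuity of $K$ (the $\limsup$ characterization in Theorem~\ref{ch03:thm:seq-LSC}) supplies, for every $x$ near $x_0$, a point $z_x \in K(x)$ with $\|z_x - y_\lambda\|$ as small as we like; for $x$ close enough, such $z_x$ then satisfies $g(x,z_x) < \varepsilon$, i.e. $z_x \in S_\varepsilon(x)$, and $\|z_x - y_0\| < \eta$. This shows $\limsup_{x\to x_0} d(y_0, S_\varepsilon(x)) = 0$, which by Theorem~\ref{ch03:thm:seq-LSC}(iii) means $S_\varepsilon$ is lower semicontinuous at $x_0$, hence on $X$.

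To finish, since $S_\varepsilon$ is lower semicontinuous with nonempty values and $\theta_l$ is lower semicontinuous, the first form of Berge's Maximum theorem (Theorem~\ref{ch03:thm:Berge-firstForm}, item 1) gives that $\varphi^p(x) = \sup_y\{\theta_l(x,y) : y \in S_\varepsilon(x)\}$ is lower semicontinuous; as $X = \dom K = \dom S_\varepsilon$ is nonempty and compact, Weierstrass' theorem yields a minimizer, i.e. a solution. The main obstacle is precisely the lower semicontinuity of $S_\varepsilon$: for the exact map $S$ (formally $\varepsilon = 0$) the same perturbation fails because there is no slack to absorb the change from $g(x_0,\cdot)$ to $g(x,\cdot)$ — this is exactly why Example~\ref{ch02:example:MultipleSolutionsFollower} has no pessimistic solution — and the argument hinges on carefully preserving the \emph{strict} inequality $g < \varepsilon$ on a whole neighbourhood, which is where continuity of $g$ (hence of $\varphi_f$, hence Berge's second form) does its work.
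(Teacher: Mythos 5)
Your proof is correct, and its overall skeleton matches the paper's: reduce everything to lower semicontinuity of $S_{\varepsilon}$, then invoke the first form of Berge's Maximum theorem for $\varphi^p$ and Weierstrass on the compact set $X=\dom K$. Where you genuinely diverge is in how you certify $S_{\varepsilon}(x_0)\subset\{y\,:\,\limsup_{x\to x_0}d(y,S_{\varepsilon}(x))=0\}$. The paper first shrinks the problem to $y_0\in\rint(S_{\varepsilon}(x_0))$ (using convexity of $S_{\varepsilon}(x_0)$ and closedness of the target set), and then splits into a singleton and a non-singleton case, using the \emph{strict} convexity of $\theta_f(x_0,\cdot)$ on a midpoint decomposition $y_0=\tfrac12 y_0^++\tfrac12 y_0^-$ to force the strict inequality $\theta_f(x_0,y_0)<\phi(x_0)+\varepsilon$. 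You instead take an arbitrary $y_0\in S_{\varepsilon}(x_0)$ and slide it along the segment toward an exact minimizer $\bar y$, so that $g(x_0,y_\lambda)\leq(1-\lambda)\varepsilon<\varepsilon$ by \emph{plain} convexity of $\theta_f(x_0,\cdot)$; the rest (continuity of $g=\theta_f-\varphi_f$ via Berge's second form, then lower semicontinuity of $K$ to land in $K(x)$ near $y_\lambda$) coincides with the paper's endgame. Your route buys two things: it dispenses with the relative-interior reduction and the case distinction, and it shows that strict convexity is not actually needed — convexity of $\theta_f(x,\cdot)$ suffices. (Both you and the paper's proof implicitly read hypothesis (iii) as a statement about $\theta_f$ rather than $\theta_l$; that is clearly the intended reading, but it is worth flagging the typo.) The only cosmetic remark is that strict convexity is invoked in your write-up to say the minimizer $\bar y$ is ``well defined'' (unique), whereas mere existence of some minimizer — guaranteed by compactness of $K(x_0)$ and continuity of $\theta_f(x_0,\cdot)$ — is all the argument uses.
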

\begin{proof}
    Again, if $S_{\varepsilon}$ is lower semicontinuous, the existence of solutions follows directly from compactness of $X$ (due to $(iv)$) and lower semicontinuity of $\varphi^p(x) = \sup_{y\in S_{\varepsilon}(x)}\theta_l(x,y)$ (due to $(i)$ and the first form of Berge Maximum theorem, cf. Theorem \ref{ch03:thm:Berge-firstForm}).\\

    Now, let us fix $x_0\in X$ and let $C = \{y\in Y\,:\, \limsup_{x\to x_0} d(y,S_{\varepsilon}(x)) = 0 \}$. In view of Theorem \ref{ch03:thm:seq-LSC}, we need to show that
    \(
    S_{\varepsilon}(x_0) \subset C.
    \)
    Note that since $S_{\varepsilon}(x_0)$ is convex, one has that $S_{\varepsilon}(x) = \overline{\rint S_{\varepsilon}(x)}$. Then, since $C$ is closed, it is enough to show that $\rint(S_{\varepsilon}(x_0)) \subset C$.\\
    
    Choose $y_0\in \rint(S_{\varepsilon}(x_0))$, and let $\phi(x) = \min_{y\in K(x)}\theta_{f}(x,y)$. We distinguish two situations:
    \begin{itemize}
        \item $K(x_0)$ is a singleton with $K(x_0) = \{y_0\}$. In such a case, $\rint(S_{\varepsilon}(x_0)) = \{y_0\}$ and $\theta_f(x_0,y_0) = \phi(x_0) < \phi(x_0) + \varepsilon$.
        \item $K(x_0)$ is not a singleton. Then, $S_{\varepsilon}(x_0)$ is not a singleton and therefore $\rint(S_{\varepsilon}(x))\cap \ext(S_{\varepsilon}(x)) = \emptyset$. Then there exist $y_0^+,y_0^\in S_{\varepsilon}(x_0)$ such that $y_0 = \tfrac{1}{2}y_0^+ + \tfrac{1}{2}y_0^-$. Using the strict convexity of $\theta_f(x_0,\cdot)$, we get that
        \[
        \theta_f(x_0,y_0) < \tfrac{1}{2}\theta_f(x_0,y_0^+) + \tfrac{1}{2}\theta_f(x_0,y_0^-) \leq \tfrac{1}{2}(\phi(x_0)+\varepsilon)+\tfrac{1}{2}(\phi(x_0)+\varepsilon) = \phi(x_0) + \varepsilon.
        \]
    \end{itemize}
    In either case, $\theta_f(x_0,y_0) < \phi(x_0) + \varepsilon$. Using the second form of Berge Maximum theorem (Theorem \ref{ch03:thm:Berge-secondForm}), we get that $\phi$ is continuous. Therefore, $(x,y)\mapsto\theta_f(x,y) - \phi(x)$ is also continuous and so, the inequality $\theta_f(x_0,y_0) - \phi(x_0) < \varepsilon$ hold in a nieghborhood. That is, there exists a neighborhood $U\times V$ of $(x_0,y_0)$ such that
    \[
    \theta_f(x,y) < \phi(x) + \varepsilon, \quad \forall (x,y) \in U\times V.
    \]
    Now, let $(x_n)$ be a sequence converging to $x_0$ such that 
    \[
    \limsup_{x\to x_0}d(y_0,S_{\varepsilon}(x)) = \lim_n d(y_0,S_{\varepsilon}(x_n)).
    \]
    Since $K$ is itself lower semicontinuous, we can find a sequence $y_n\to y_0$ with $y_n\in K(x_n)$ for every $n\geq n_0$ large enough. Then, by enlarging $n_0$ if necessary, we can assume that $(x_n,y_n)\in U\times V$ for every $n\geq n_0$. Then,
    \[
    \forall n\geq n_0, \theta_f(x_n,y_n) < \phi(x_n) + \varepsilon,
    \]
    and so $y_n\in S_{\varepsilon}(x_n)$ for each $n\geq n_0$. This yields that $d(y_0,S_{\varepsilon}(x_n)) \leq d(y_0,y_n)\to 0$, proving that $y_0\in C$. The proof is then finished.
\end{proof}

\newpage
\section{Problems}\label{ch03:sec:Problems}

\begin{problem}\label{ch03:problem:Semicontinuity-BetweenTwoFunctions} Let $f,g: X\subset \R\to\R$ two functions with $f\leq g$, and consider the set-valued map $F:X\tto \R$ given by
\[
F(x) = \{ y\in \R\,:\, f(x)\leq y\leq g(x) \}.
\]
Prove that, for $x_0\in X$ one has that
\begin{enumerate}
    \item $F$ is upper semicontinuous at $x_0$ $\iff$ $f$ is lower semicontinuous at $x_0$ and $g$ is upper semicontinuous at $x_0$.
    \item $F$ is lower semicontinuous at $x_0$ $\iff$ $f$ is upper semicontinuous at $x_0$ and $g$ is lower semicontinuous at $x_0$.
\end{enumerate}
\end{problem}

\begin{problem}\label{ch03:problem:Closedness-semilimits-distances} Let $X$ and $Y$ be two metric spaces and $M:X\tto Y$ be any set-valued map. Fix $x_0\in X$. Prove that
\begin{enumerate}
    \item $A = \{y\,:\, \limsup_{x\to x_0} d(y,M(x)) = 0 \}$ is closed.
    \item $B = \{y\,:\, \liminf_{x\to x_0} d(y,M(x)) = 0 \}$ is closed.
\end{enumerate}
\end{problem}
\chapter{Reformulations and algorithms}
\label{Chapter04:Algorithms}

In this chapter we focus our attention on optimistic bilevel programming problems, and how to produce algorithm to solve them. For fairly general problems, we will present the principal tool we have: one-level reformulations. The idea is to replace the bilevel problem for an equivalent one-level problem, in the sense that they have ``the same'' solutions. Of course, such an approach requires some regularity, mainly on the lower-level.

\section{Karush-Kuhn-Tucker (KKT) conditions}
\label{ch04:sec:KKT}
Before we start, let us quickly recall the main aspects of Karush-Kuhn-Tucker (KKT) conditions in smooth optimization. To ease the exposition and maintaining the spirit of a quick reminder, we will omit the proofs of this section. 

Let us consider a (single-level) optimization problem of the form
\begin{equation}\label{ch04:eq:GeneralSmooth-OptProblem}
\min_{y\in Y} f(y)\quad\text{ where }\quad Y = \left\{ y\in\R^q\,:\,\begin{array}{ll}
    h_i(y) = 0 & \forall i\in I \\
    g_j(y)\leq 0 & \forall j\in J 
\end{array} \right\}.
\end{equation}
Here, $I$ and $J$ are finite (possibly empty) index sets, and $h_i:\R^q\to\R$ and $g_j:\R^q\to\R$ are assumed to be of class $\mathcal{C}^1$, for each $i\in I$ and each $j\in J$. One can define the \textbf{tangent cone} and the \textbf{normal cone} to $Y$ at a point $y\in Y$ as
\begin{align}
    T_Y(y) &:= \left\{ \nu\in \R^q\,:\, \begin{array}{l}\exists \nu_k\to \nu, t_k\to 0^+,\text{ such that }\label{ch04:eq:TangentCone}\\
    y+t_k\nu_k \in Y,\text{ for each } k\in\N.
    \end{array}\right\}\\[2em]
    N_Y(y) &:= [T_Y(y)]^{\circ} = \big\{ \eta\in\R^q\, :\, \langle \eta,\nu\rangle \leq 0, \forall \nu\in T_Y(y) \big\}.\label{ch04:eq:NormalCone}
\end{align}
There are many other notions of tangent cones and normal cones (see, e.g. \citep*{Thibault2023Unilateral}). Here, we are presenting $T_Y(y)$ to be the \textit{Bouligand tangent cone}, and $N_Y(y)$ to be its polar cone, which we know it coincides (in finite dimensions) with the \textit{Fr\'echet normal cone} (see, e.g. \citep*{Kruger2009Nonsmooth}). Both notions are illustrated in Figure~\ref{ch04:fig:TangentAndNormal}.

\begin{figure}[ht]
    \centering
    \includegraphics[width=0.7\linewidth]{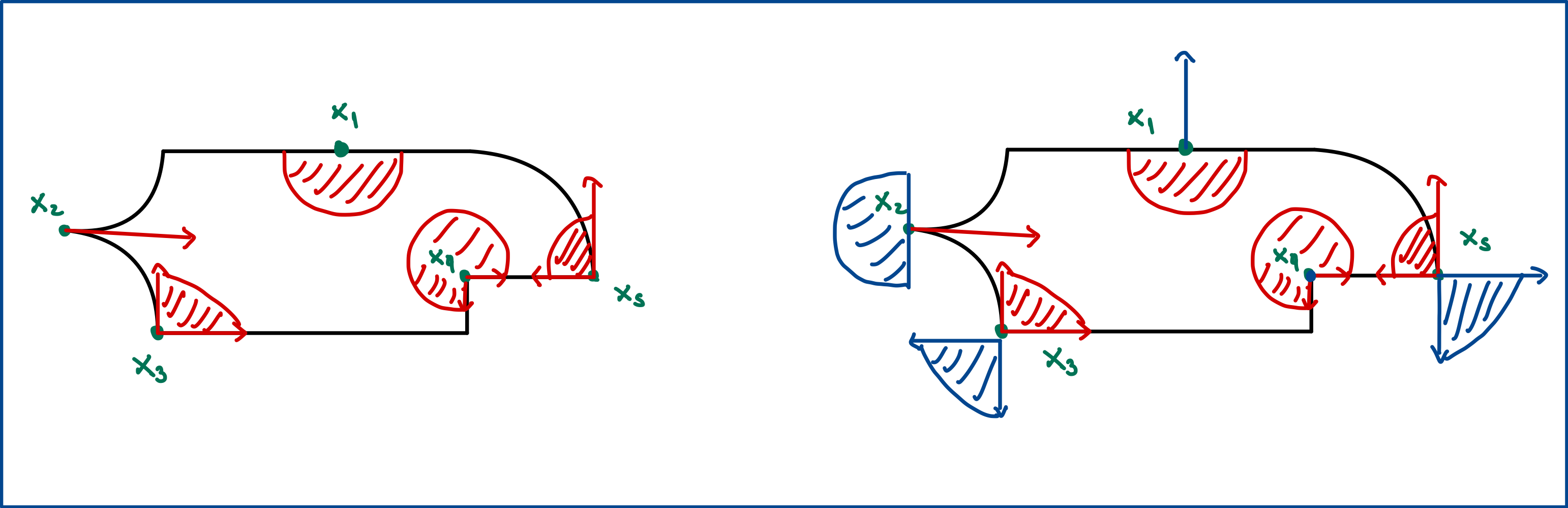}
    \caption{Tangent and Normal cones at different points. Left: Only tangent cones. Right: Tangents and Normals.}
    \label{ch04:fig:TangentAndNormal}
\end{figure}

With these notions, we can establish a fairly general proposition about first-order necessary conditions in smooth constrained optimization. The idea is simple: the normal cone $N_Y(y)$ encodes the directions that are fully ``leaving'' the set $Y$ at the given point $y\in Y$. If $y\in Y$ is a local minima for $f$, then all descent directions of $f$ at $y$ should be ``leaving'' the set $Y$. This is captured by the inclusion $-\nabla f(y)\in N_Y(y)$.

\begin{proposition}\label{ch04:prop:First-OrderNC} Consider the problem \eqref{ch04:eq:GeneralSmooth-OptProblem} and let $\bar{y}\in Y$. Assume that the objective function $f$ is differentiable at $\bar{y}$. Then,
\[
\bar{y}\text{ is a local optima of \eqref{ch04:eq:GeneralSmooth-OptProblem}}\implies -\nabla f(\bar{y}) \in N_Y(\bar{y}).
\]
Moreover, if \eqref{ch04:eq:GeneralSmooth-OptProblem} is a convex problem (i.e., $f$ convex, $g_j$ convex for all $j\in J$, $h_i$ affine for all $i\in I$), then 
\[
-\nabla f(\bar{y}) \in N_Y(\bar{y}) \iff \bar{y}\in \argmin_{y\in Y} f(y).
\]
\end{proposition}

Now, computing the tangent cone $T_Y(y)$ and verifying the inclusion $-\nabla f(y) \in N_Y(y)$ can be very difficult using the definitions \eqref{ch04:eq:TangentCone} and \eqref{ch04:eq:NormalCone}. However, if we have access to the derivatives of the constraint functions, we can use them to try to provide an approximation of the objects. 
\begin{definition}\label{ch04:def:LinearizedTangent}Consider problem \eqref{ch04:eq:GeneralSmooth-OptProblem} and let $y\in Y$. Let $\mathcal{A}(y)\subset J$ be the set of active constraints at $y$, that is,
\[
\mathcal{A}(y) := \left\{ j\in J\ :\ g_j(y) = 0 \right\}.
\]
We define the \textit{linearized tangent cone} of $Y$ at the point $y$ as
\begin{equation}\label{ch04:eq:LinearizedTangent-Equation}
    L_{Y}(y) = \left\{ \nu\in\R^q\, :\, \begin{array}{ll}
         \langle \nabla h_i(y),\nu\rangle = 0,&\forall i\in I  \\
         \langle \nabla g_j(y),\nu\rangle \leq 0,&\forall j\in \mathcal{A}(y) 
    \end{array} \right\}
\end{equation} 
\end{definition}

\begin{note}{Note: Tangents and normal are local}
    As Figure~\ref{ch04:fig:TangentAndNormal} illustrates, tangent and normal cones are local concept: they only depend on what happens near the point. This is why in Definition~\ref{ch04:def:LinearizedTangent} it is important to consider only active constraints.\\
    
    If at a given point $y\in Y$ the constraint $[g_j\leq 0]$ is not active (i.e., $g_j(y)<0$), then locally it is not possible to distinguish between the set $[g_j < 0]$ and the whole space. Thus, $T_Y(y)$ is the same whether we consider the constraint $g_j$ or not. Consistently, $L_Y(y)$ must dismiss $g_j$ to correctly approximate $T_Y(y)$. 
\end{note}

The main property of the Linearized cone is that we can explicitly compute its polar. It is a mild consequence of Farkas' Lemma (see, e.g., \cite[Theorem 4.6]{Bertsimas1997Introduction}). Using this computation, we can write the following lemma.

\begin{lemma}\label{ch04:lemma:Polar-Linearized} Consider the problem \eqref{ch04:eq:GeneralSmooth-OptProblem} and let $y\in Y$. If $T_y(y) = L_Y(y)$, then
    \[
    N_Y(y) = \left\{\sum_{i\in I}\lambda_i\nabla h_i(y) +\sum_{j\in \mathcal{A}(y)}\mu_i\nabla g_j(y)\,:\, \lambda\in\R^{I}, \mu\in\R_+^{J} \right\}.
    \]
\end{lemma}

\begin{note}{Intuition of the lemma}
   The set $Y$ is nonlinear. However, at a given point $y\in Y$ we could consider a linear approximation of $Y$ by linearizing the active constraints. This gives us the set
   \[
   \tilde{Y} = \left\{ z\in\R^q\,:\,\begin{array}{ll}
    h_i(y) + \langle\nabla h_i(y),z-y\rangle = 0, & \forall i\in I \\
    g_j(y) + \langle\nabla g_j(y),z-y\rangle \leq 0, & \forall j\in \mathcal{A}(y) 
\end{array} \right\}. 
   \]
   The tangent cone of $\tilde{Y}$ is exactly $L_Y(y)$. The normal cone of $L_Y(y)$ is exactly the linear combination of the gradients of the linear constraints defining $\tilde{Y}$, with the detail that coefficients of inequality constraints must be positive (to consider only outgoing directions for the constraints $[g_j\leq 0]$). This intuition is illustrated in Figure~\ref{ch04:fig:LinearizationOfSet}.
\end{note}

\begin{figure}[ht]
    \centering
    \includegraphics[width=0.7\linewidth]{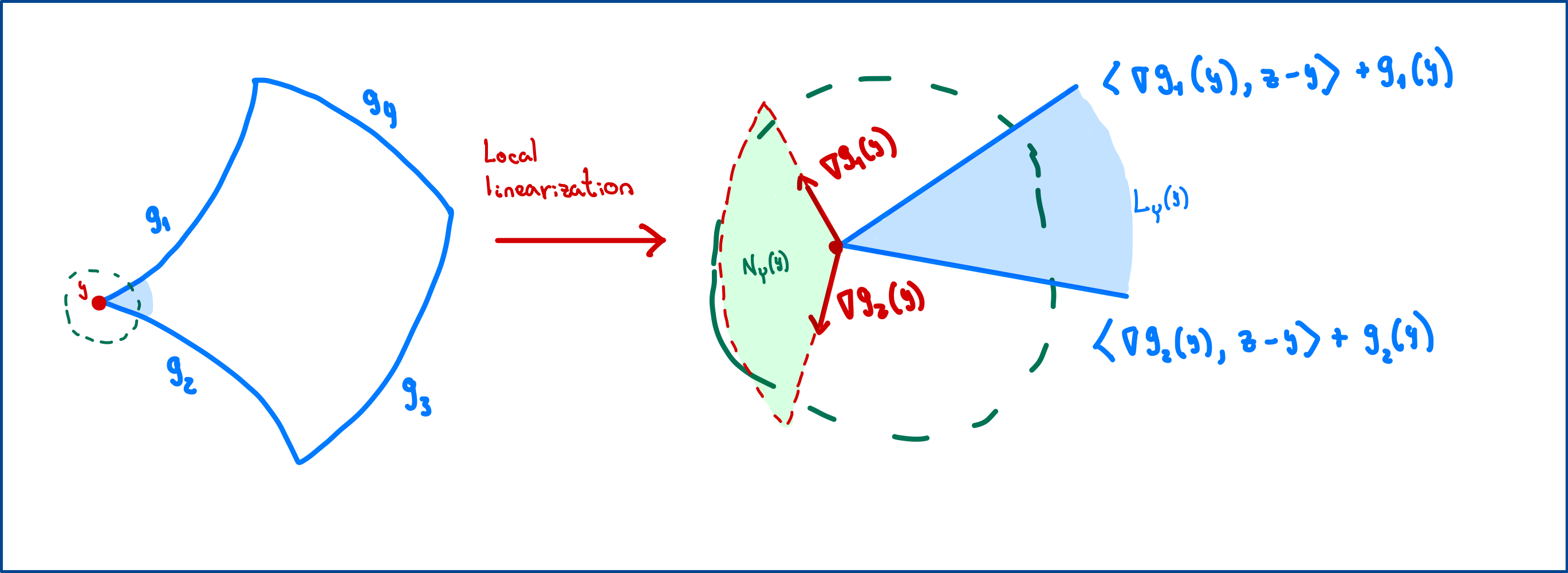}
    \caption{The approximation of the set $Y$ to a linear set $\tilde{Y}$ near a point $y$. $y + L_Y(\bar{y})$ coincides with $\tilde{Y}$, near $\bar{y}$. If $T_Y(y) = L_Y(y)$, the normal cone $N_Y(y)$ is obtained as the polar of $L_Y(y)$.}
    \label{ch04:fig:LinearizationOfSet}
\end{figure}

\begin{theorem}[KKT equations]\label{ch04:thm:KKTEquations} Consider problem \eqref{ch04:eq:GeneralSmooth-OptProblem} and let $y^*\in Y$ be a local minima of $f$ over $Y$. If $T_Y(y^*) = L_Y(y^*)$ then there exist multipliers $(\lambda,\mu)\in\R^I\times\R^J$ veryfing that
\begin{align*}
    \nabla f(y^*) + \sum_{i\in I} \nabla h_i(y^*) + \sum_{j\in J}\mu_j\nabla g_j(y^*) &= 0,\\
    \forall j\in J, \mu_j&\geq 0,\\[1em]
    \forall j\in J, \mu_j g_j(y^*) &= 0.
\end{align*}
\end{theorem}
\begin{proof} From Proposition \ref{ch04:prop:First-OrderNC}, we have that $-\nabla f(y^*) \in \N_{Y}(y^*)$. Since $T_Y(y^*) = L_Y(y^*)$, there exists $(\lambda_i\, :\, i\in I)$ and $(\hat{\mu}_j\, :\, j\in \mathcal{A}(y^*))$ such that
\[
    -\nabla f(y^*) = \sum_{i\in I} \nabla h_i(y^*) + \sum_{j\in \mathcal{A}(y^*)}\mu_j\nabla g_j(y^*),\quad\text{ and }\quad
    \forall j\in \mathcal{A}(y^*), \hat{\mu}_j\geq 0.
\]
We can extend the vector $\hat{\mu}\in\R_+^{\mathcal{A}(y^*)}$ to $\mu \in\R_+^J$ simply by setting
\[
\mu_j = \begin{cases}
    \hat{\mu}_j\quad&\text{ if }j\in \mathcal{A}(y^*),\\
    0\quad& \text{ otherwise}.
\end{cases}
\]
Then, it is not hard to verify that $\mu_jg_j(y^*) = 0$ for all $j\in J$, and so, the result follows.
\end{proof}

\begin{formulation}{How to use KKT equations} 
    Theorem \ref{ch04:thm:KKTEquations} is commonly use as follows: We first show that $T_Y(y) = L_Y(y)$ at every point $y\in Y$ so the theorem is valid for every potential local minima. Then, we solve the KKT equations: Find all tuples $(y,\lambda,\mu)\in\R^q\times\R^I\times \R^J$ verifying that
    \begin{align*}
    \nabla f(y) + \sum_{i\in I} \nabla h_i(y) + \sum_{j\in J}\mu_j\nabla g_j(y) &= 0,\\
    \forall j\in J, \mu_j&\geq 0,\\[1em]
    \forall j\in J, \mu_j g_j(y) &= 0,\\[1em]
    \forall i\in I, h_j(y) &= 0,\\[1em]
    \forall j\in J, g_j(y) &\leq 0.
\end{align*}
The last two equations are included so the tuples $(y,\lambda,\mu)$ satisfying the KKT system verify that $y\in Y$. Since the hypotheses of Theorem \ref{ch04:thm:KKTEquations} hold at every point, we can ensure that all local minima (including global minima) can be found among the first coordinate of tuples $(y,\lambda,\mu)$.
\end{formulation}

A set $Y$ together with its representation with equalities/inequalities as in \eqref{ch04:eq:GeneralSmooth-OptProblem}, that verifies $T_Y(y) =L_Y(y)$ is said to be \emph{constraint qualified}. Similarly, an optimization problem is \emph{constraint qualified} if its feasible set together with its representation, is constraint qualified.  How easy or hard is to verify that a set is constraint qualified? In general, it can get tricky to verify the condition $T_Y(y) = L_Y(y)$ at every point. However, one can apply sufficient conditions to obtain constraint qualifications (see, e.g., \citep*{solodov2010constraint}). The following proposition surveys the most classic constraint qualifications.

\begin{proposition}\label{ch04:prop:ConstraintQualifications}
   Let $Y\subseteq \R^q$ given by
   \[
   Y = \left\{ y\in\R^q\,:\,\begin{array}{ll}
    h_i(y) = 0 & \forall i\in I \\
    g_j(y)\leq 0 & \forall j\in J 
\end{array} \right\},
   \]
   and let $y\in Y$. Consider the following conditions:
   \begin{itemize}\setlength{\itemsep}{0.3cm}
       \item\textbf{(LCQ):} The functions $h_i$ and $g_j$ are affine for every $i\in I$, $j\in J$.
       \item\textbf{(LICQ):} The set $\{\nabla h_i(y)\,:\, i\in I\}\cup \{\nabla g_j(y)\,:\, j\in \mathcal{A}(y)\}$ is linearly independent.
       \item\textbf{(MFCQ)}: The set $\{\nabla h_i(y)\,:\, i\in I\}$ is linearly independent and there exists $\nu\in\R^q$ such that
       \begin{align*}
           \langle \nabla h_i(y),\nu\rangle = 0,& \forall i \in I,\\
           \langle \nabla g_j(y),\nu\rangle < 0,& \forall j \in \mathcal{A}(y).
       \end{align*}
       \item \textbf{(Slater)}: The functions $h_i$ are affine for every $i\in I$, the functions $g_j$ are convex for every $j\in J$ and there exists $y^*\in Y$ (in principle, different from $y$) such that
       \(
        g_j(y^*) < 0, \text{ for all }j\in J.
       \)
   \end{itemize}
   If any of the conditions above are verified, then $T_Y(y) = L_Y(y)$.
\end{proposition}

\begin{note}{Abadi CQ}
    In fact, a set $Y$ is said to be constraint qualified at a point $y\in Y$, if
    \[
    N_Y(y) = \left\{\sum_{i\in I}\lambda_i\nabla h_i(y) +\sum_{j\in \mathcal{A}(y)}\mu_i\nabla g_j(y)\,:\, \lambda\in\R^{I}, \mu\in\R_+^{J} \right\}.
    \]
    The reader can appreciate that in fact is this the minimal condition to deduce KKT theorem. The condition $T_Y(y) = L_Y(y)$ is then a \emph{sufficient condition} to have constraint qualification, known as Abadi CQ. For the sake for the exposition, we consider Abadi CQ as the starting point, since it has a clear geometric interpretation. It is worth to mention that the examples where the feasible set is constraint qualified but Abadi CQ doesn't hold are quite pathological.
\end{note}
\section{Single-level reformulation for Optimistic problems}
\label{ch04:sec:Reformulations}

Let us come back to optimistic bilevel programming, but now, assuming that for every leader's decision, the lower-level problem is given as in \eqref{ch04:eq:GeneralSmooth-OptProblem}. That is, we study the problem
\begin{equation}\label{ch04:eq:Optimistic-LowerLevelFunctionalConstraints}
    \begin{array}{rl}
        \displaystyle\min_{x,y} & \theta_l(x,y)  \\
         s.t. &\begin{cases}
             x\in X,\\
             y\text{ solves }\left\{\begin{array}{rl}
                 \displaystyle\min_{y} & \theta_f(x,y)  \\
                  s.t. & \begin{cases}
                      h_i(x,y) = 0,&\quad\forall i\in I,\\
                      g_j(x,y) \leq 0,&\quad \forall j\in J.
                  \end{cases}
             \end{array}\right.
         \end{cases} 
    \end{array}
\end{equation}
Here, the constraint map $K:X\tto Y$ is given by
\begin{equation}\label{ch04:eq:FunctionalConstraintsMap}
K(x) = \left\{y\in \R^q\, :\, \begin{array}{rl}
        h_i(x,y) = 0,&\,\forall i\in I,\\
        g_j(x,y) \leq 0,&\, \forall j\in J
\end{array}\right\}.
\end{equation}
To simplify the analysis, we will assume that $K$ has nonempty compact values and $\theta_f(x.\cdot)$ is at least lower semicontinuous. Thus, for every leader's decision $x\in X$, the follower's problem admits a solution. The goal of this section is two revise the two most common single-level reformulations of \eqref{ch04:eq:Optimistic-LowerLevelFunctionalConstraints}: the value-function reformulation and the Mathematical programming with complementarity constraints (MPCC) reformulation.
\subsection{Value-function reformulation}
Suppose that it is possible to compute the following value-function:
\begin{equation}\label{ch04:eq:ValueFunction}
    \begin{aligned}
        V:X&\to \R\\
        x&\mapsto \min_y\{ \theta_f(x,y)\, :\, y\in K(x)\}. 
    \end{aligned}
\end{equation}
Note that the value function has finite values due to the assumptions on $\theta_f$ and $K$. For every $x\in X$, the solution set $S(x)$ can be written in terms of $V$ as
\begin{equation}\label{ch04:eq:Equations-of-S(x)-valueFunction}
    S(x) = \left\{y\in \R^q\, :\, \begin{array}{rl}
        \theta_f(x,y) - V(x) \leq 0,\\
        h_i(x,y) = 0,\,\forall i\in I,\\
        g_j(x,y) \leq 0,\, \forall j\in J
\end{array}\right\}.
\end{equation}
Using this description, we can rewrite problem \eqref{ch04:eq:Optimistic-LowerLevelFunctionalConstraints} using the value function as follows.
\begin{formulation}{Value-function reformulation}
    Let $V:X\to\R$ given as in \eqref{ch04:eq:ValueFunction}. Then, problem \eqref{ch04:eq:Optimistic-LowerLevelFunctionalConstraints} is equivalent to
    \begin{equation}\label{ch04:eq:Optimistic-ValueFunctionRef}
    \begin{array}{rl}
        \displaystyle\min_{x,y} & \theta_l(x,y)  \\
         s.t. &\begin{cases}
             x\in X,\\
             \theta_f(x,y) - V(x) \leq 0,\\
        h_i(x,y) = 0,\,\forall i\in I,\\
        g_j(x,y) \leq 0,\, \forall j\in J.
        \end{cases}
    \end{array}
\end{equation}
\end{formulation}

While formulation \eqref{ch04:eq:Optimistic-ValueFunctionRef} is single-level, there is a price to pay: all the nonconvexity and nonsmoothness of the problem is now encoded in the function $V(\cdot)$, which, in general, doesn't have a closed form. This is a huge obstruction for the value-function reformulation, and its applicability is still at research level.

\subsection{MPCC reformulation}
Suppose that for every leader's decision $x\in X$, the set $K(x)$ is constraint qualified at every point $y\in K(x)$ and the follower's objective function $\theta_f(x,\cdot)$ is of class $\mathcal{C}^1$. Then, we could replace the optimality condition $y\in S(x)$ by the KKT conditions given by Theorem~\ref{ch04:thm:KKTEquations}. Indeed, we can define the Lagrangian function given by
\begin{equation}\label{ch04:eq:LagrantianFunction}
\mathcal{L}(x,y,\lambda,\mu) = \theta_f(x,y) + \sum_{i\in I} \lambda_ih_i(x,y) + \sum_{j\in J} \mu_j g_j(x,y),
\end{equation}
where the gradient with respect to the $y$-variable is given by
\begin{equation}\label{ch04:eq:LagrantianFunction-Gradient}
\nabla_y\mathcal{L}(x,y,\lambda,\mu) = \nabla_y\theta_f(x,y) + \sum_{i\in I} \lambda_i\nabla_yh_i(x,y) + \sum_{j\in J} \mu_j \nabla_y g_j(x,y).
\end{equation}
Then, instead of looking at the solution set $S(x)$, we consider the set of all tuples $(y,\lambda,\mu)$ that verify the KKT equations, that is, the set
\begin{equation}\label{ch04:eq:KKT-map}
\mathrm{KKT}(x) = \left\{(y,\lambda,\mu)\, :\, \begin{array}{rl}
\nabla_y\mathcal{L}(x,y,\lambda,\mu) &= 0,\\
    \forall j\in J,\,\, \mu_j&\geq 0,\\
    \forall j\in J,\,\, \mu_j g_j(x,y) &= 0,\\
    \forall i\in I,\,\, h_j(x,y) &= 0,\\
    \forall j\in J,\,\, g_j(x,y) &\leq 0.
\end{array}\right\}.  
\end{equation}
Note that all the constraints in $\mathrm{KKT}(x)$ are usual constraints, except for the complementarity constraints $\mu_jg_j(x,y) = 0$. The inclusion of these constraints is what leads to the formulation of a problem of Mathematical Programming with Complementarity Constraints (MPCC).
\begin{formulation}{MPCC formulation}
    Let us consider the Lagrangian function $\mathcal{L}$ as in \eqref{ch04:eq:LagrantianFunction}. The MPCC reformulation of problem \eqref{ch04:eq:Optimistic-LowerLevelFunctionalConstraints} is given by
    \begin{equation}\label{ch04:eq:MPCC-reformulation}
        \begin{array}{rl}
             \displaystyle\min_{x,y,\lambda,\mu} &\theta_l(x,y) \\
             s.t. &\begin{cases}
             x\in X,\\
                 \nabla_y\mathcal{L}(x,y,\lambda,\mu) = 0,\\
    \forall j\in J,\,\, \mu_j\geq 0,\\
    \forall j\in J,\,\, \mu_j g_j(x,y) = 0,\\
    \forall i\in I,\,\, h_j(x,y) = 0,\\
    \forall j\in J,\,\, g_j(x,y) \leq 0.
             \end{cases} 
        \end{array}
    \end{equation}
  Note that the decision variable of the MPCC reformulation is not longer $(x,y)\in\R^p\times\R^q$ but rather  $(x,y,\lambda,\mu)\in\R^p\times\R^q\times\R^I\times\R^J$, and that the feasible set is no longer compact, due to the unboundedness of $(\lambda,\mu)$. 
\end{formulation}

The main difference with the value-function reformulation, is that the nonconvexity and nonsmoothness of the problem are encapsulated in the constraints $\nabla_y\mathcal{L} = 0$, and the complementarity constraints $\mu_jg_j(x,y) = 0$. However, both constraints are now very regular: both are, in some sense, ``quadratic'' constraints induced by the product between multipliers and the data functions (or their derivatives). The price to pay, though, is that this new formulation is placed at the lifted space $\R^p\times\R^q\times\R^I\times\R^J$, and the equivalence between it and the original problem \eqref{ch04:eq:Optimistic-LowerLevelFunctionalConstraints} is less clear and possibly false.

\begin{example}\label{ch04:ex:Example1-BPvsMPCC} We construct an example where the Bilevel problem admits a unique solution, and the MPCC formulation doesn't have any. Consider the problem
\[
(BP) = \left\{\begin{array}{rl}
    \displaystyle\min_{x,y} & x  \\
     s.t. & \begin{cases}
         x\geq 0,\\
         y=(y_1,y_2)\text{ solves } \left\{\begin{array}{cl}
    \displaystyle\min_{y} & y_1  \\
     s.t. &\begin{cases}
         y_1^2 - y_2 - x\leq 0,\\
         y_1^2 + y_2\leq 0.
     \end{cases} 
\end{array}\right.
     \end{cases} 
\end{array}\right.
\]
The solution of the lower level is unique and it is given by $y(x) = (-\sqrt{x/2}, -x/2)$. The unique solution of $(BP)$ is then $x= 0$ and $y(x) = (0,0)$. See Figure \ref{ch04:fig:Example1-BPvsMPCC}. 
\begin{figure}[ht]
    \centering
    \includegraphics[width=0.7\linewidth]{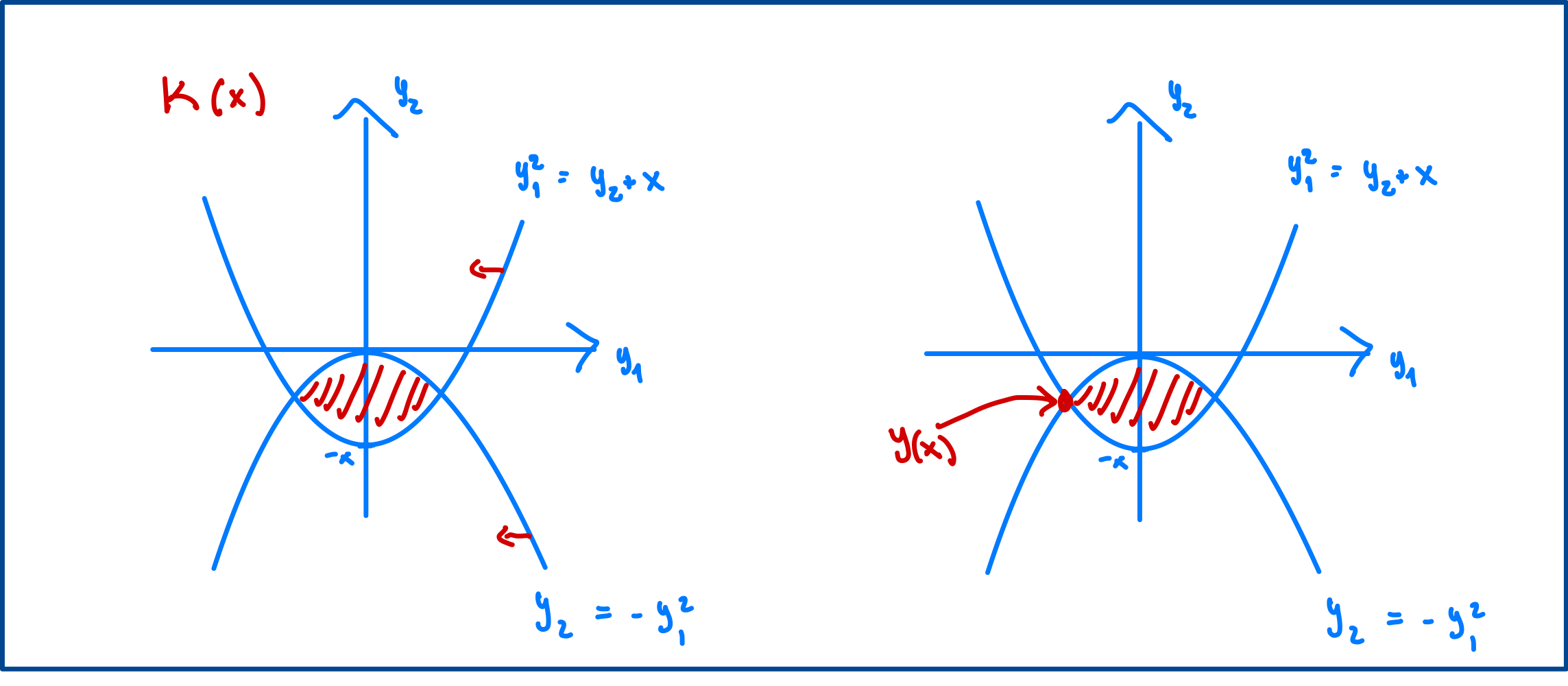}
    \caption{Lower level feasible set $K(x)$ and solution $y(x)$ of problem (BP) in Example \ref{ch04:ex:Example1-BPvsMPCC}.}
    \label{ch04:fig:Example1-BPvsMPCC}
\end{figure}
If we do the MPCC reformulation of $(BP)$, we obtain the problem
\[
(MPCC) = \left\{\begin{array}{cl}
    \displaystyle\min_{x,y,\mu} & x  \\
     s.t. &\begin{cases}
         x\geq 0,\\
         1 + 2\mu_1y_1 + 2\mu_2y_2 = 0,\\
         \mu_2 - \mu_1 = 0,\\
         \mu_1(y_1^2 - y_2 - x) = 0,\\
         \mu_2(y_1^2 + y_2) = 0,\\
         y_1^2 - y_2 - x\leq 0,\\
         y_1^2 + y_2\leq 0,\\
         \mu_1,\mu_2\geq 0.
     \end{cases} 
\end{array}\right.
\]
This problem fails to have a solution. Indeed, it is possible to verify that $x>0$ is always feasible. However, for $x=0$ one has that 
\[
y_1^2\leq y_2\text{ and }y_1^2\leq -y_2 \implies y_1 = y_2 = 0,
\]
and so the second constraint becomes $1=0$, leading to infeasibility. The problem here is that for $x = 0$ one has that $K(0) = \{(0,0)\}$ and therefore:
\begin{itemize}
    \item $T_{K(0)}(0,0) = \{(0,0)\}$.
    \item $L_{K(0)}(0,0) = \left\{ z\in\R^2\,:\, -y_2\leq 0,y_2\leq 0 \right\} = \R\begin{pmatrix}
        1\\ 0
    \end{pmatrix}$.
\end{itemize}
The lower level problem is not constraint qualified for $x=0$, and Theorem \ref{ch04:thm:KKTEquations} doesn't apply.
\end{example}

\begin{example}\label{ch04:ex:Example2-BPvsMPCC} We now provide an example of a Bilevel problem for which the MPCC admits solutions, but none of them is a solution of the original problem.
\[
(BP) = \left\{\begin{array}{rl}
    \displaystyle\min_{x,y} & (x-1)^2 + y^2  \\
     s.t. & y\text{ solves } \left\{\begin{array}{cl}
    \displaystyle\min_{y} & x^2y  \\
     s.t. & y^2\leq 0.
\end{array}\right.
\end{array}\right.
\]
Note that here $K(x) = \{0\}$ for every $x\in \R$ and therefore the unique solution of the lower-level problem is given by $y(x) =0$. This yields that the optimal solution is given by $(x,y) = (1,0)$. 

Concerning constraint qualification, we have that $T_{K(x)}(0) = \{0\}$. However,
\begin{align*}
L_{K(x)}(0) &= \left\{ z\in\R\,:\, g'(0)z \leq 0 \right\}\\
&= \left\{ z\in\R\,:\, 2yz\big|_{y=0} \leq 0 \right\} = \R.
\end{align*}
Thus, for any $x\in\R$, the lower-level problem fails to be constraint qualified. Finally, the MPCC formulation of $(BP)$ is given by
\[
(MPCC) = \left\{\begin{array}{cl}
    \displaystyle\min_{x,y,\mu} & (x-1)^2 + y^2  \\
     s.t. &\begin{cases}
         x^2 + 2\mu y = 0,\\
         \mu y^2 = 0,\\
         y^2 \leq 0,\\
         \mu\geq 0.   
     \end{cases} 
\end{array}\right.
\]
The feasible set of $(MPCC)$ is given by $\{ (0,0,\mu)\, :\, \mu\geq 0 \}$, all feasible points being optimal solutions. However, $(0,0)$ is not optimal solution of $(BP)$. 
\end{example}

Both Examples~\ref{ch04:ex:Example1-BPvsMPCC} and \ref{ch04:ex:Example2-BPvsMPCC} are based on the pathological behavior of representations of feasible sets via constraints that are not constraint qualified. However, if we impose constraint qualifications on the lower-level problem, together with convexity, the MPCC reformulation yields an equivalent problem, in the sense of the following theorem.

\begin{theorem}[Equivalent MPCC reformulation]\label{ch04:thm:Equivalence-BP-MPCC} Consider problem \eqref{ch04:eq:Optimistic-LowerLevelFunctionalConstraints} and its MPCC reformulation \eqref{ch04:eq:MPCC-reformulation}. Suppose that the lower-level problem is convex. That is,
\begin{enumerate}
    \item[(i)] The function $\theta_f(x,\cdot)$ is convex for every $x\in X$.
    \item[(ii)] The functions $g_j(x,\cdot)$ are convex for every $x\in X$ and every $j\in J$.
    \item[(iii)] The functions $h_i(x,\cdot)$ are affine for every $x\in X$ and every $i\in I$.
\end{enumerate}
    Suppose moreover that for every $x\in X$ the constraint set $K(x)$ as described in \eqref{ch04:eq:FunctionalConstraintsMap} is constraint qualified. Then
    \begin{itemize}
        \item[(a)] If $(\bar{x},\bar{y})$ is an optimal solution of the bilevel problem \eqref{ch04:eq:Optimistic-LowerLevelFunctionalConstraints}, then there exists $(\bar{\lambda},\bar{\mu})\in \R^I\times\R^J$ such that $(\bar{x},\bar{y},\bar{\lambda},\bar{\mu})$ is an optimal solution of the MPCC reformulation \eqref{ch04:eq:MPCC-reformulation}.
        \item[(b)] If $(\bar{x},\bar{y},\bar{\lambda},\bar{\mu})$ is an optimal solution of the MPCC reformulation \eqref{ch04:eq:MPCC-reformulation}, then $(\bar{x},\bar{y})$ is an optimal solution of the bilevel problem \eqref{ch04:eq:Optimistic-LowerLevelFunctionalConstraints}. 
    \end{itemize}
\end{theorem}
\begin{proof} Let us show both statements separately. To simplify the exposition, let us denote by $(BP)$ the bilevel problem \eqref{ch04:eq:Optimistic-LowerLevelFunctionalConstraints} and by $(MPCC)$ the MPCC reformulation \eqref{ch04:eq:MPCC-reformulation}. As usual, we denote by $S(x)$ the solution set of the lower-level problem induced by $x\in X$.

\paragraph{(a):} Suppose that $(\bar{x},\bar{y})$ is an optimal solution of $(BP)$. Then, $\bar{y}\in S(\bar{x})$, which yields, by Proposition \ref{ch04:prop:First-OrderNC}, that $-\nabla_y\theta_f(\bar{x},\bar{y})\in N_{K(\bar{x})}(\bar{y})$.

Since $K(x)$ is constrained qualified, Theorem~\ref{ch04:thm:KKTEquations} applies, and so there exists $(\bar{\lambda},\bar{\mu})\in \R^I\times\R^J$ such that $(\bar{y},\bar{\lambda},\bar{\mu})\in KKT(\bar{x})$ as in \eqref{ch04:eq:KKT-map}. Equivalently, $(\bar{x},\bar{y},\bar{\lambda},\bar{\mu})$ is feasible for $(MPCC)$. 

Now, let $(x,y,\lambda,\mu)$ be any other feasible point of $(MPCC)$. Constraint qualification of the lower-level problem yields that $-\nabla_y\theta_f(x,y) \in N_{K(x)}(y)$. Since the lower-level problem induced by $x$ is convex, we deduce that in fact $y\in S(x)$. Thus, $(x,y)$ is feasible for $(BP)$. Optimality of $(\bar{x},\bar{y})$ yields that
\[
\theta_l(\bar{x},\bar{y})\leq \theta_l(x,y).
\]
Since $(x,y,\lambda,\mu)$ is arbitrary, we deduce that $(\bar{x},\bar{y},\bar{\lambda},\bar{\mu})$ is optimal for $(MPCC)$.

\paragraph{(b):} Suppose that $(\bar{x},\bar{y},\bar{\lambda},\bar{\mu})$ is optimal for $(MPCC)$. As before, convexity yields that $(\bar{x},\bar{y})$ is feasible for $(BP)$. Let $(x,y)$ be any other feasible point of $(BP)$. As before, Proposition \ref{ch04:prop:First-OrderNC} and the constraint qualification of $K(x)$ entail that there exists $(\lambda,\mu)\in\R^I\times\R^J$ such that $(x,y,\lambda,\mu)$ is feasible for $(MPCC)$. Then, optimality of $(\bar{x},\bar{y},\bar{\lambda},\bar{\mu})$
\[
\theta_l(\bar{x},\bar{y})\leq \theta_l(x,y).
\]
Since $(x,y)$ is arbitrary, we deduce that $(\bar{x},\bar{y})$ is optimal for $(BP)$.
\end{proof}

\begin{warning}
    The MPCC reformulation usually requires convexity of the lower-level problem to be a valid reformulation. Indeed, it is convexity that provides the equivalence between optimal solutions (that is, elements of $S(x)$) and critical points (that is, under constraint qualifications, elements of $KKT(x)$).\\ 
    
    Without the convexity assumption, the MPCC reformulation might create artifacts: new feasible points $(x,y,\lambda,\mu)$ for which $(x,y)$ are not feasible points of the original problem. These points appear, for example, when $y$ is a local (and not global) optima of the lower-level problem.
\end{warning}

It is also reasonable to ask whether the local optima of the bilevel problem \eqref{ch04:eq:Optimistic-LowerLevelFunctionalConstraints} are related or not to the local optima of \eqref{ch04:eq:MPCC-reformulation}, in the spirit of Theorem~\ref{ch04:thm:Equivalence-BP-MPCC}. In general, even under convexity and constraint qualifications, this doesn't hold: local optima of the bilevel problem induce local optima of the MPCC reformulation, but the converse doesn't necessarily hold. However, it is possible to provide sufficient conditions to have equivalence. We refer the reader to \citep*{DempeDutta2012IsBilevel}.

%%%%%%%%%%%%%%%%%%%%%%%%%
%%%%%%%%%%%%%%%%%%%%%%%%%

\section{B\&B and hardness in linear bilevel programming}
\label{ch04:sec:BandB}

Let us go back to the case of (optimistic) linear bilevel programming, given in \eqref{ch02:eq:BLP-Optimistic}:

\begin{equation}\label{ch04:eq:LBP}
        \left\{\begin{array}{rl}
           \displaystyle\min_{x,y}  & c_l^{\top}x + d_l^{\top}y  \\
            s.t. & \left\{\begin{array}{l}
                 A_lx\leq b_l,  \\
                 y \text{ solves } \left\{\begin{array}{rl}
           \displaystyle\min_{y\in\R^q}  & c_f^{\top}y \\
            s.t. & A_fx + B_fy \leq b_f.
        \end{array}\right.
            \end{array}\right.
        \end{array}\right.
        \tag{LBP}
    \end{equation} 

Since linear problems are always constraint qualified, Theorem \ref{ch04:thm:Equivalence-BP-MPCC} applies and we can consider the equivalent linear MPCC reformulation given by
\begin{equation}\label{ch04:eq:LMPCC}
        \left\{\begin{array}{rl}
           \displaystyle\min_{(x,y,\mu)}  & \alpha_l^{\top}x + \beta_l^{\top}y  \\
            s.t. & \left\{\begin{array}{l}
                 A_lx\leq b_l,  \\
                 A_fx + B_fy \leq b_f,\\
                 -B_f^{\top}\mu = c_f,\\
                 \mu\geq 0,\\
                 \mu\odot(A_fx + B_fy - b_f) = 0.
            \end{array}\right.
        \end{array}\right.
        \tag{LMPCC}
\end{equation}
Here, the $\odot$ operation stands for the pointwise product of vectors. That is, for $a,b\in \R^n$, $a\odot b = (a_i\cdot b_i\,:\, i\in [n])$. In particular, we have that
\[
\mu\odot(A_fx + B_fy - b_f) = ( \mu_i\cdot( A_fx + B_fy - b_f)_i\,:\, i\in [m] ).
\]
In this problem, two main difficulties arise:
\begin{enumerate}
    \item The constraints $\mu\odot(A_fx + B_fy - b_f) = 0$, called \emph{complementary slackness}, are nonlinear.
    \item The (dual) variables $\mu\in\R_+^m$ are unbounded.
\end{enumerate}

\subsection{Big-M reformulation}
A first idea to deal with the difficulties of \eqref{ch04:eq:LMPCC} is to provide a constant $M>0$ large enough such that we can include the constraint $\mu\leq M$ without cutting any optimal solution of \eqref{ch04:eq:LBP}. This idea is based on the fact that multipliers of the MPCC reformulation are in fact dual solutions.

\begin{lemma}\label{ch04:lemma:MPCC-DualityPairs} A tuple $(x,y,\mu)$ is feasible for \eqref{ch04:eq:LMPCC} if and only if $A_lx\leq b_l$ and $(y,\mu)$ is a primal-dual solution pair for the lower-level problem of \eqref{ch04:eq:LBP} induced by $x$ (i.e. $y$ is a solution of the lower-level problem induced by $x$, and $\mu$ is a solution of the associated dual).
\end{lemma}
\begin{proof}
    Let $X = \{ x\in\R^p\,:\, \exists y\in\R^q,\, (x,y)\in D\}$. For a given $x\in X$, let us write the lower-level problem induced by $x$ as
    \[
    P(x) = \left\{\begin{array}{rl}
           \displaystyle\min_{\mu}  & c_f^{\top}y  \\
            s.t. & A_fx+B_fy \leq b_f.
        \end{array}\right.
    \]
    Note that the dual problem of $P(x)$ is given by
   \[
   D(x) = \left\{\begin{array}{rl}
           \displaystyle\max_{\mu}  & (A_fx-b_f)^{\top}\mu  \\
            s.t. & \left\{\begin{array}{l}
                 -B_f^{\top}\mu = c_f,  \\
                 \mu\geq 0.
            \end{array}\right.
        \end{array}\right.
   \]
   Then, if $(x,y,\mu)$ is a feasible point of \eqref{ch04:eq:LMPCC}, it yields directly that $y$ is feasible for $P(x)$ and $\mu$ is feasible for $D(x)$. Moreover, using the complementary slackness constraint, we can write
   \begin{align*}
       (A_fx-b_f)^{\top}\mu &= (A_fx + B_fy-b_f)^{\top}\mu - \mu^{\top}B_fy = \mu^{\top}B_fy = c_f^{\top}y.
   \end{align*}
   Strong duality (see notes of Section \ref{ch02:subsec:ContinuityCOnstrainedSets}) entails that $(y,\mu)$ are a primal-dual solution pair associated to the lower-level problem induced by $x$.

   The converse also holds: indeed, suppose that $(y,\mu)$ is a primal-dual solution pair associated to the lower-level problem induced by $x$. Then, by primal-dual feasibility, $(x,y,\mu)$ verifies directly almost all the constraints of \eqref{ch04:eq:LMPCC}, but for the complementary slackness. However, we can easily write from strong duality that
   \begin{align*}
       0 = -c_f^{\top}y - (b_f-A_fx)^{\top}\mu = (B_f^{\top}\mu)^{\top}y + (A_fx-b_f)^{\top}\mu = (A_fx + B_fy-b_f)^{\top}\mu.
   \end{align*}
   The proof is then finished.
\end{proof}

\begin{proposition} \label{ch04:prop:existence-BigM} Consider the linear bilevel problem \eqref{ch04:eq:LBP}. There exists $M$ large enough such that, for every feasible point $(x,y)$ of  \eqref{ch04:eq:LBP}, there exists $\mu\leq M$ such that $(x,y,\mu)$ is a feasible point of \eqref{ch04:eq:LMPCC}.      
\end{proposition}

\begin{proof}
    Consider the region 
    \[
    \Lambda = \{ \mu\in\R^m\,:\, -B_f^{\top}\mu = c_f,\, \mu\geq 0 \}.
    \]
    since $\Lambda$ doesn't contain lines, then $\ext(\Lambda)$ is nonempty. Recall that for every vector $\gamma\in\R^m$, one has that
    \[
    \max_{\mu\in\Lambda} \gamma^{\top}\mu <+\infty \implies \max_{\mu\in\Lambda} \gamma^{\top}\mu = \max_{\mu\in\ext(\Lambda)}\gamma^{\top}\mu.
    \]
    Define
    \[
    M = \max\{ \|\mu\|_{\infty}\,:\, \mu\in\ext(\Lambda) \}.
    \]
    Now, let $(x,y)$ be a feasible point of \eqref{ch04:eq:LBP}. Then, $y$ is a solution of the lower-level problem of \eqref{ch04:eq:LBP} induced by $x$, and so, the associated dual problem
    \[
    \max_{\mu\in\Lambda} (A_fx - b_f)^{\top}\mu
    \]
    has finite value. In particular, there exists $\mu\in \ext(\Lambda)$ such that $(y,\mu)$ is a primal-dual solution pair of the lower-level problem of \eqref{ch04:eq:LBP} induced by $x$. Using Lemma \ref{ch04:lemma:MPCC-DualityPairs}, we conclude that $(x,y,\mu)$ is a feasible point of \eqref{ch04:eq:LMPCC}. Since $\|\mu\|_{\infty}\leq M$ by construction, the result follows.
\end{proof}

With these ingredients, we are ready to present what is known as the Big-M reformulation of \eqref{ch04:eq:Big-M}. The formulation decouples the complementary slackness constraint in \eqref{ch04:eq:LMPCC} by introducing a binary variable (peer cordinate) that decides which part of the product will be zero. This formulation can be traced back to \citep{Fortuny-Amat1981Representation}.

\begin{formulation}{Big-M formulation}
For $M>0$, the Big-M reformulation of \eqref{ch04:eq:Big-M} is given by the Mixed-Integer programmong problem
        \begin{equation}\label{ch04:eq:Big-M}
     \left\{\begin{array}{rl}
           \displaystyle\min_{(x,y,z,\mu)}  & \alpha_l^{\top}x + \beta_l^{\top}y  \\
            s.t. & \left\{\begin{array}{l}
                 A_lx\leq b_l,  \\
                 A_fx + B_fy -b_f \leq 0,\\
                 -B_f^{\top}\mu = c_f,\\
                 \mu\geq 0,\\
                 \mu \leq M(1-z),\\
                 b_f - B_fy - A_fx \leq Mz\\
                 z\in \{0,1\}^m,
            \end{array}\right.
        \end{array}\right.
        \tag{Big-M}
   \end{equation}
\end{formulation}

\begin{theorem}\label{ch04:thm:Big-M-LBP-Equivalence} Consider the linear bilevel problem \eqref{ch04:eq:LBP} and suppose that it is bounded, that is, the set
\[
D = \left\{ (x,y)\,:\, \begin{array}{l}
     A_lx\leq b_l  \\
     A_f x + B_f y \leq b_f 
\end{array} \right\}
\]
is compact. Then, there exists $M>0$ large enough, such that \eqref{ch04:eq:LBP} is equivalent to \eqref{ch04:eq:Big-M} in the sense that:
 \begin{enumerate}
     \item For any solution (resp. feasible set) $(x,y)$ of \eqref{ch04:eq:LBP}, there exists  a pair $(\mu^*,z^*)$ such that $(x^*,y^*,\mu^*,z^*)$ is a solution (resp. feasible set) of \eqref{ch04:eq:Big-M}.
     \item For any solution (resp. feasible set) $(x^*,y^*,\mu^*,z^*)$ of \eqref{ch04:eq:Big-M}, the pair $(x^*,y^*)$ is a solution (resp. feasible set) of \eqref{ch04:eq:LBP}.
 \end{enumerate}
\end{theorem}
\begin{proof}
    It is enough to prove the statements for feasible points. Let $M_1>0$ be the constant given by Proposition \ref{ch04:prop:existence-BigM}, and let
    \[
    M_2 = \max_{(x,y)} \big\{ \|b_f - A_fx - B_fy\|_{\infty}\,:\, (x,y)\in D \big\}.
    \]
    We set $M= \max\{M_1,M_2\}$.
    
    Now, let $(x,y)$ be a feasible set of \eqref{ch04:eq:LBP}. Then, by Proposition~\ref{ch04:prop:existence-BigM}, there exists $\mu\leq M$ such that $(x,y,\mu)$ is feasible for \eqref{ch04:eq:LMPCC}. Now, define $z\in \{0,1\}^m$ given by
    \[
    \forall i\in [m],\,\,z_i = \begin{cases}
        1\quad&\text{ if }\mu_i = 0,\\
        0\quad&\text{ otherwise.}
    \end{cases}
    \]
    Then, complementary slackness (i.e. the constraint $\mu\odot (b_f - B_fy - A_fx) = 0$) and the fact that the constraint $b_f - B_fy - A_fx\leq M$ is trivial by construction, yield that $(x,y,\mu,z)$ is feasible for \eqref{ch04:eq:Big-M}. 
    
    Conversely, take a feasible point $(x,y,\mu,z)$ of \eqref{ch04:eq:Big-M}. Note that the constraints involving $z$, together with the non-negativity of $\mu$ and $b_f - B_fy -A_fx$, entails that $(x,y,\mu)$ verify the complementary slackness. Thus,  $(x,y,\mu)$ is feasible for \eqref{ch04:eq:LMPCC} and so, $(x,y)$ is feasible for \eqref{ch04:eq:LBP}. The proof is then finished. 
\end{proof}

\subsection{SOS1 branch-and-bound}

%The core algorithm to solve Mixed-Integer programming problems is Branch-and-Bound. The main idea is to solve relaxed versions of the problem, by relaxing the integer constraints, and branch over the binary variables: That is, for a variable $z\in\{0,1\}$, to branch by either imposing $z=0$ or $z=1$.
The main advantage of \eqref{ch04:eq:Big-M} is that it is a Mixed-Integer programming problem, and so it can be passed directly to well-developed solvers, such as \texttt{Gurobi}. However, the drawback is that the constant $M$ large enough needs to be computed.

It is possible to work directly with \eqref{ch04:eq:LMPCC} using branching over the complementary slackness. This branching method can be also directly passed to commercial solvers by encoding the complementary slackness as SOS1-type constraints (see, e.g., \citep{Aussel2024SOS1}).
\begin{formulation}{SOS1 Branch-and-Bound}
We present the SOS1 branch-and-bound algorithm for \eqref{ch04:eq:LMPCC}. The algorithm receives the formulation of \eqref{ch04:eq:LMPCC}, and returns a solution $(x^*,y^*,\mu^*)$ with optimal value $v^*\in\overline{\R}$.

\begin{enumerate}[leftmargin = 1.5cm,label=\texttt{- Step \arabic*}, ref=\texttt{(Step~\arabic*)}]\setlength{\itemsep}{0.3cm}\setcounter{enumi}{-1}
    \item \texttt{(Initialization)} Initialize the index set $I_0 = [m]$ and consider the relaxed problem
    \[
    P_0 =\left\{\begin{array}{rl}
           \displaystyle\min_{(x,y,\mu)}  & \alpha_l^{\top}x + \beta_l^{\top}y  \\
            s.t. & \left\{\begin{array}{l}
                 A_lx\leq b_l,  \\
                 A_fx + B_fy \leq b_f,\\
                 -B_f^{\top}\mu = c_f,\\
                 \mu\geq 0.\\
                 %\mu\odot(A_fx + B_fy - b_f) = 0.
            \end{array}\right.
        \end{array}\right.  
    \]
    Initialize the active nodes as $\mathcal{N} = \{ (P_0,I) \}$ and the current solution as $(x^*,y^*,\mu^*) = \texttt{NaN}$, $v = +\infty$.
    \item\label{ch04:alg:BandB-Step1} (Choosing a node) While $\mathcal{N}$ is nonempty, choose a pair $(P,I)\in \mathcal{N}$. Remove $(P,I)$ from $ \mathcal{N}$ and solve the linear problem $P$, obtaining a solution $(x,y,\mu,v)$ (if $v\in\{-\infty,+\infty\}$, set $(x,y,\mu) = \texttt{NaN}$).
    \item \label{ch04:alg:BandB-Step2} \texttt{(Pruning)} Explore the following cases:
    \begin{enumerate}\setlength{\itemsep}{0.3cm}
        \item \texttt{Prune by infeasibility:} If $v=+\infty$, continue to \ref{ch04:alg:BandB-Step4}.
        \item \texttt{Prune by bound:} If $v \in \R$ and $v\geq v^*$, continue to \ref{ch04:alg:BandB-Step4}.
         \item \texttt{Prune by SOS1-feasibility:} If $v\in\R$ and $(x,y,\mu)$ is feasible for \eqref{ch04:eq:LMPCC} then 1) If $v<v^*$, then update $(x^*,y^*,\mu^*,v^*)\leftarrow (x,y,\mu,v)$; 2) continue to \ref{ch04:alg:BandB-Step4}.
         \item \texttt{Leaf:} If $I = \emptyset$ and $v^*<v$, then update $(x^*,y^*,\mu^*,v^*)\leftarrow (x,y,\mu,v)$ and continue to \ref{ch04:alg:BandB-Step4}.
    \end{enumerate}
    \item \label{ch04:alg:BandB-Step3} \texttt{(Branching)} If none of the cases of \ref{ch04:alg:BandB-Step2} holds, then necessarily $I\neq\emptyset$. Choose $i \in I$ and consider the following problems:
         \begin{align*}
             P^- &:= P\text{ with the extra constraint }\mu_i = 0.\\
             P^+ &:= P\text{ with the extra constraint }(A_fx + B_fy - b_f)_i = 0.
         \end{align*}
         Update $\mathcal{N}\leftarrow \mathcal{N}\cup \{ (P^-,I\setminus\{i\}), (P^+,I\setminus\{i\}) \}$ and continue to \ref{ch04:alg:BandB-Step4}.
    \item\label{ch04:alg:BandB-Step4} \texttt{(Stopping)} If $v^*=-\infty$ or $\mathcal{N}=\emptyset$, return $(x^*,y^*,\mu^*,v^*)$. Otherwise, go back to \ref{ch04:alg:BandB-Step1}.
\end{enumerate}
\end{formulation}

The idea of the Branch-and-Bound method is to explore all the possible (exponentially many) combinations of how to decouple $\mu\odot (A_fx+B_fy-b_f) = 0$ in $m$ linear constraints: for each $i\in [m]$, either impose $\mu_i = 0$ or $(A_fx+B_fy-b_f)_i = 0$. 

The Branch-and-Bound algorithm sequentially includes these alternatives as a binary tree, on the process known as \emph{Branching} (see \ref{ch04:alg:BandB-Step3}). If no pruning is done, the branching process would construct a binary tree of depth $m$, and within its $2^m$ leafs we would recover all the possible combinations of decoupling $\mu\odot (A_fx+B_fy-b_f) = 0$. However, the cleverness of Branch-and-Bound is to \emph{prune} some nodes of the tree before the branching process ends. The four types of pruning are: 
\begin{enumerate}[label={(\alph*)}]
    \item \textbf{By infeasibility:} If at a given moment, the partial problem $P$ is infeasible, there is no need to continue branching. All the subsequent problems will be infeasible as well.
    \item \textbf{By bound:} If at a given moment the algorithm has a current optimal value $v^*$ and the solution of $P$ has optimal value $v>v^*$, there is no need to continue branching. Indeed, all the subsequent problems will have a worse optimal value, since $P$ is a relaxation of all its descendants (it has the least constraints). Thus, any solution we might find over subsequent nodes is worse than the current solution and there is no need to explore it.
    \item \textbf{By SOS1-feasibility:} If a current solution $(x,y,\mu,v)$ of a relaxation $P$ is already feasible for the complete problem \ref{ch04:eq:LMPCC}, then this is a candidate of optimal solution. If the $v>v`*$, then we prune the node by bound. Otherwise, $(x,y,\mu,v)$ is a better than the current solution $(x^*,y^*,\mu^*,v^*)$, and we update $(x^*,y^*,\mu^*,v^*)\leftarrow (x,y,\mu,v)$. After updating, we can prune the node as well since the criteria of pruning by bound is met.
\end{enumerate}
It is not hard to convince that the Branch-and-Bound algorithm ends with an optimal solution of \eqref{ch04:eq:LMPCC}, if it exists (if the problem if infeasible, it return $v^*=+\infty$, and if the problem is unbounded, the algorithm verifies it at a leaf of the branching tree). The price to pay is that the algorithm has exponential worst-case, since it might need to explore all the branching tree before stopping.
\begin{note}{Branch-and-Bound for MIP}
    The SOS1 Branch-and-Bound method is a direct extension of the classic Branch-and-Bound method for Mixed-Integer programming. In fact, the method for MIP problems can be deduced from the one presented here by noting that
    \[
        z \in \{0,1\} \iff z\in [0,1] \text{ and }z\cdot(1-z) = 0.
    \]
    However, solvers like \texttt{Gurobi} have the MIP version much more developed. In fact, solvers use Branch-and-Bound as a base, but run many heuristics to try to accelarate the process. 
\end{note}

\begin{note}{Big-M reformulations vs SOS1 Branch-and-Bound}

The main motivation to develop SOS1 methods was the belief that Big-M reformulations were impractical in general. In \citep{Pineda2019Glitters}, a counterexample was provided showing that natural incremental heuristics can fail to find correct big-M's. Moreover, in \citep{Kleinert2020HardnessBig-M}, it was shown that verifying if a constant $M$ is large enough or not for the validity of Theorem~\ref{ch04:thm:Big-M-LBP-Equivalence} is very hard. These results led the community of bilevel programming that efficient Big-M reformulations were only possible for particular cases.\\

However, very recently, in \citep{Buchheim2023BilevelInNP} it was shown that valid constants can be found efficiently and so, Big-M reformulations and SOS1 Branch-and-Bound are both applicable in practice. While some arguments have been made in favor of SOS1 Branch-and-Bound (see, e.g., \citep{KleinertSchmidt2023NoNeed-BigM}), there is no categorical answer to which method is better.  
\end{note}

\subsection{Linear bilevel programming is NP-hard}\label{subsec:NLP-NP-hard}

A natural question is whether the algorithms we have seen so far are optimal for (optimistic) linear bilevel programming or not. That is, it is possible to do better than using MIP solvers or SOS1 Branch-and-Bound? The answer is: not really, unless $P=NP$.

\begin{note}{Complexity classes $P$ and $NP$ (Informal definitions)}
    Computational complexity is the study of how difficult is a computational decision problem, which is a class $\mathcal{I}$ of problems such that for any instance $I\in\mathcal{I}$, the answer to $I$ is either \texttt{YES} or \texttt{NO}. Two fundamental classes of problems are defined:
    \begin{enumerate}
        \item \textbf{Complexity class P:} A class of problems $\mathcal{I}$ is said to belong to P if there is a polynomial-time algorithm $\mathrm{Alg}_{\mathcal{I}}$ that solves it. That is, for an instance $I\in \mathcal{I}$, the algorithm $\mathrm{Alg}_{\mathcal{I}}$ receives an encoding of $I$ and returns the solution after $p(|I|)$ many iterations, where $p(\cdot)$ is a polynomial and $|I|$ is the length on the encoding.
        \item \textbf{Complexity class NP:} A class of problems $\mathcal{I}$ is said to belong to NP if there is a polynomial-time algorithm $\mathrm{Alg}_{\mathcal{I}}$ that verifies solutions for it using polynomial certificates. That is, for an instance $I\in \mathcal{I}$ there exists a certificate $V$ of size $q(|I|)$, where $q(\cdot)$ is a polynomial, such that the algorithm $\mathrm{Alg}_{\mathcal{I}}$ receives the encoding of $(I,V)$ and decides the solution of $I$ after $p(|I|)$ many iterations.
    \end{enumerate}
 The class NP is an abbreviation of ``non-deterministic polynomial'', which intuitively means that it contains the problems that can be decided in polynomial time if we are allowed to ``guess'' the solution and verify it afterwards.  It is clear that ${\rm P}\subset{\rm NP}$ since, for problems in P, we can use $\emptyset$ as certificate and just run $P$. Probably the most celebrated open problem in Computers Science is whether P$=$NP or not.\\
 
 For a formal exposition on complexity classes, we refer the reader to \citep{AroraBarak2009Complexity}. 
\end{note}

\begin{note}{NP-hardness}
    A computational class of (not necessary decision) problems $\mathcal{I}$ is said to be \textbf{NP-hard} if for any class $\mathcal{J}$ in NP, there exist an algorithm $R$ (called \textit{polynomial-time reduction}) that receives an instance $J\in\mathcal{J}$ and
    \begin{enumerate}
        \item $R$ computes an instance $I\in\mathcal{I}$ associated to $J$ in polynomial time (with respect to the size $|J|$).
        \item If $x$ is a solution of $I$ (of polynomial size with respect to $|J|$), then $R$ can use $x$ and $I$ to solve $J$ in polynomial time (with respect to $|J|$).
    \end{enumerate}
    A problem is NP-hard if it is at least as hard as any problem in NP. Indeed, if one could solve the instances of $\mathcal{I}$ in polynomial time (and so, finding solutions of polynomial size), then $R$ could be used to solve any instance of $\mathcal{J}$ in polynomial time as well.\\

     To show NP-hardness of a given class $\mathcal{I}$, it is enough to construct a polynomial-time reduction from another NP-hard class $\mathcal{J}$ to the class $\mathcal{I}$.
\end{note}

We will show that the class of Optimistic bilevel linear programming problems \texttt{BLP} is NP-hard. To do so, we will follow the strategy of \citep{Ben-Ayed1990Difficulties}: we will show that the class \texttt{KNAPSACK} can be reduced in polynomial-time to \texttt{BLP}.

\begin{formulation}{KNAPSACK problem}
    For a list of numbers $A:=\{a_1,\ldots,a_n\}\in \N$ and a threshold $\delta\in \N$, find the subset $S\subset A$ that maximizes $\sum_{a\in S} a$ without exceeding the threshold $\delta$. That is, to solve the integer programming problem
    \begin{equation}\label{ch04:eq:Knapsack-IP}
    \left\{\begin{array}{cl}
       \displaystyle\max_{x}  & \sum_{i=1}^{n} a_ix_i  \\
         s.t. & \begin{cases}
             \sum_{i=1}^n a_ix_i \leq \delta,\\
             x\in\{0,1\}^n.
         \end{cases} 
    \end{array}\right.
    \end{equation}
    Note that an instance $I\in \texttt{KNAPSACK}$ is given by $I = (a_1,\ldots,a_n,\delta)$ and problem \eqref{ch04:eq:Knapsack-IP} has polynomial-size encoding with respect to $|I|$. It is well-known that \texttt{KNAPSACK} is NP-hard. 
\end{formulation}

\begin{warning}
    Here, we are assuming that encodings are \textbf{binary encodings}. If one uses other type of encodings (such as unary, of example), the complexity results presented here are not longer valid.
\end{warning}

Take an instance $I=(a_1,\ldots,a_n,\delta)$. Without lose of generality, we will assume that $\max\{a_i\,:\, i\in [n]\}\geq 2$. Indeed, the case where all $a_i$'s are equal to 1 is trivial to solve. In the IP formulation \eqref{ch04:eq:Knapsack-IP}, we relax the constraint $x\in\{0,1\}^n$ by introducing a new variable $y\in[0,1]^n$ that penalizes the values that are not integer. Thus, we want to impose
\[
x\in\{0,1\}^n\xrightarrow{\hspace{1cm}} \begin{cases}
    x\in [0,1]^n,\\
    y_i = \min\{x_i,1-x_i\},\quad \forall i\in [n].
\end{cases}
\]
Then, we use the new variables $y$ to penalize choosing fractional elements in the Knapsack problem.
\begin{equation}\label{ch04:eq:knapsack-to-blp-partial}
    \left\{\begin{array}{cl}
       \displaystyle\max_{x,y}  & \sum_{i=1}^{n} a_ix_i  - M\sum_{i=1}^n y_i \\
         s.t. & \begin{cases}
             \sum_{i=1}^n a_ix_i \leq \delta,\\
             x\in [0,1]^n,\\
             y_i = \min\{x_i,1-x_i\},\quad \forall i\in [n].
         \end{cases} 
    \end{array}\right.
\end{equation}
Finally, we can rewrite the definition $ y_i = \min\{x_i,1-x_i\}$ by solving a lower-level problem.
\begin{equation}\label{ch04:eq:knapsack-to-blp}
    \left\{\begin{array}{cl}
       \displaystyle\max_{x,y}  & \sum_{i=1}^{n} a_ix_i  - M\sum_{i=1}^n y_i \\
         s.t. & \begin{cases}
             \sum_{i=1}^n a_ix_i \leq \delta,\\
             x\in [0,1]^n,\\
             y\text{ solves }\left\{\begin{array}{cl}
                        \displaystyle\max_{y}  & \sum_{i=1}^{n} y_i  \\
                         s.t. & \begin{cases}
                             y\leq x, y\leq 1-x,\\
                             y\geq 0.
                         \end{cases} 
                    \end{array}\right.
         \end{cases} 
    \end{array}\right.
\end{equation}

Note that if $M\in\N$ has polynomial size with respect to $I$, then the encoding of \eqref{ch04:eq:knapsack-to-blp} is of polynomial size with respect to the encoding of $I$.

Thus, in order to show that \eqref{ch04:eq:knapsack-to-blp} is a polynomial-time reduction of $I$, it is enough to show that $M$ can be chosen of polynomal-size such that for any solution $(x,y)$ of \eqref{ch04:eq:knapsack-to-blp}, $x$ is a solution of \eqref{ch04:eq:Knapsack-IP}.

\begin{theorem} Let $I = (a_1,\ldots,a_n,\delta)\in\texttt{\em KNAPSACK}$ with $\beta =\max\{a_i\,:\, i\in [n]\}\geq 2$. Choose $M>\beta^2$. Then, if $(x,y)$ is a feasible point of \eqref{ch04:eq:knapsack-to-blp} with $x$ not integer, there exists $z\in\{0,1\}^n$ such that $(z,0)$ is also feasible for \eqref{ch04:eq:knapsack-to-blp} and
\[
\sum_{i=1}^{n} a_ix_i  - M\sum_{i=1}^n y_i < \sum_{i=1}^{n} a_iz_i.
\]
In particular, every optimal solution $(x,y)$ of \eqref{ch04:eq:knapsack-to-blp} is integer, and therefore its first coordinate is also a solution of \eqref{ch04:eq:Knapsack-IP}. Therefore, choosing $M = \beta^2+1$ induces a polynomial-time reduction of $I$ to \eqref{ch04:eq:knapsack-to-blp}.   
\end{theorem}
\begin{proof}
     Set $Q = 1 - \frac{1}{\beta}$ and consider the functions
     \[
     f(x) = \sum_{i=1}^n a_ix_i \quad\text{ and }\quad g(x) = \sum_{i=1}^n \min\{x_i,1-x_i\}. 
     \]
     Since $\beta\geq 2$, $Q\geq 1/2$. We produce $z$ by modifying $x$ as follows:
     \begin{enumerate}[label=(\arabic*)]
         \item Choose $x_i\in (0,Q]$, set $x_i = 0$ and go to (2). If this is not possible, simply go to (2).
         \item Choose $x_i,x_j\in (Q,1)$ (If this is not possible, go to (3)). Replace them by $x_i',x_j'$ so $a_ix_i' + a_jx_j' = a_ix_i + a_jx_j$, $x_i'+x_j'\geq x_i+x_j$, and $\max\{x_i',x_j'\}=1$.
         \begin{itemize}
             \item If $a_i = a_j$ just set $x_i' = 1$ and $x_j' = x_j - (1-x_i)$. Since $Q\geq 1/2$, both new values are non-negative.
             \item If $a_i<a_j$, set $x_i' = 1$, and $x_j' = x_j - \frac{a_i}{a_j}(1-x_i)$.
         \end{itemize}
         Go to (3).
         \item If there is only one $x_j\in (Q,1)$ and all the other entries are integer, set $x_j = 1$. Otherwise, go back to (1).
     \end{enumerate}
     Note that at each modification, the number of integer variables increases. Thus, the procedure ends after at most $n$ modification with an integer vector $z$. Let $x^0,x^1,\ldots,x^m$ be the sequence of modifications obtained using this method, with $x^0 = x$ and $x^m = z$.

     Take $k\leq m-1$ and suppose that $x^k$ is feasible for \eqref{ch04:eq:knapsack-to-blp}. We will show that $x^{k+1}$ is also feasible \eqref{ch04:eq:knapsack-to-blp} and that $f(x^k)-Mg(x^k) \leq f(x^{k+1}) - Mg(x^{k+1})$, with the exception of the case $k=m$, where the inequality is strict.
     \paragraph{Case (1):} Suppose that the modification from $x^k$ to $x^{k+1}$ happens at (1). Then, there is $j\in [n]$ such that $x_j^k\in (0,Q]$ and $x_j^{k+1} = 0$. Clearly, $x^{k+1}$ is feasible. Moreover, 
     \begin{itemize}
         \item If $x_j^k\leq 1/2$, we have that
         \begin{align*}
             f(x^{k+1}) - M g(x^{k+1}) &= (f(x^{k}) - a_jx_j)  - M(g(x^k)-x_j)\\
             &= f(x^{k}) - Mg(x^k) + (M-a_j)x_j > f(x^{k}) - Mg(x^k).
         \end{align*}
         \item If $x_j \in (1/2,Q]$ we have that
         \begin{align*}
             f(x^{k+1}) - M g(x^{k+1}) &= (f(x^{k}) - a_jx_j)  - M(g(x^k)-(1-x_j))\\
             &\geq f(x^{k}) - Mg(x^k) + M(1-Q) - a_jQ\\
             &\geq f(x^{k}) - Mg(x^k) + M/\beta - 1 > f(x^{k}) - Mg(x^k).
         \end{align*}
     \end{itemize}
     In either case, the desired inequality $f(x^k)-Mg(x^k) < f(x^{k+1}) - Mg(x^{k+1})$ holds.
     \paragraph{Case (2):} Suppose that the modification from $x^k$ to $x^{k+1}$ happens at (2). Then, by construction $x^{k+1}$ is feasible. Moreover, $f(x^{k+1}) = f(x^k)$. Finally, let $i,j$ the involved coordinates, and suppose that $a_i\geq a_j$. Then,
     \begin{align*}
         g(x^{k+1}) &= g(x^{k}) - (2 - (x_i^k+x_j^k)) + \min\{x_j^{k+1},1-x_j^{k+1}\}\\
         &\leq g(x^{k}) - (2 - (x_i^{k+1}+x_j^{k+1})) + \min\{x_j^{k+1},1-x_j^{k+1}\}\\
         &=  g(x^{k}) - (1 - x_j^{k+1}) + \min\{x_j^{k+1},1-x_j^{k+1}\} \leq g(x^k).
     \end{align*}
     This yields that the inequality $f(x^k)-Mg(x^k) \leq f(x^{k+1}) - Mg(x^{k+1})$ holds. Moreover, note that if $k = m-1$, then $x_j^{k+1} = 0$, and so, in this case, $g(x^{m})<g(x^{m-1})$. Then, if $k=m-1$, we get that $f(x^{m-1})-Mg(x^{m-1}) < f(x^{m}) - Mg(x^{m})$.
     \paragraph{Case (3):} Suppose that the modification from $x^k$ to $x^{k+1}$ happens at (3). This can only happen if $k=m-1$, and we trivially have that $f(x^{m-1})-Mg(x^{m-1}) < f(x^{m}) - Mg(x^{m})$. Thus, we only need to show that $z=x^m$ is feasible, that is, $\sum_{i=1}^n a_iz_i \leq \beta$. But in this case, we have that
     \[
     \sum_{i=1}^n a_iz_i < 1 + \sum_{i=1}^n a_ix_i^{m-1} \leq 1+\beta.
     \]
     since both, $\sum_{i=1}^n a_iz_i$ and $1+\beta$ are integers, we conclude that $\sum_{i=1}^n a_iz_i \leq \beta$, finishing the construction.

     Now, the conclusion follows applying induction, deducing that $x=x^0,x^1,\ldots,x^m=z$ is a feasible sequence, verifying that
     \begin{align*}
     f(x) - Mg(x) = f(x^0) - Mg(x^0)&\leq f(x^1) - Mg(x^1)\\
     & \vdots\\
     &\leq f(x^{m-1}) - Mg(x^{m-1}) < f(x^{m}) - Mg(x^{m}) = f(z).
     \end{align*}
     The proof is then finished.
\end{proof}

\begin{note}{Bilevel programming in NP}
 It has been recently showed in \citep{Buchheim2023BilevelInNP} that the decision version of the optimistic linear bilevel programming belongs to NP.\\
 
 The decision version is as follows: For the problem \eqref{ch04:eq:LBP} with rational data, and a rational value $V$, to decide whether there is a feasible point $(x,y)$ with value smaller than $V$ or not.  Here, rational entries is paramount to provide a proper analysis. See, e.g., \citep{Conforti2014Integer} for the nuances of rational entries in linear (and mixed-integer) programming.  
\end{note}

%%%%%%%%
% In the future
%%%%%%%

%\section{Kth-best algorithm in linear bilevel programming}\label{ch04:sec:Kth-Best}
%\begin{enumerate}
%    \item Geometry: Feasible region, Induced Region, piecewise linearity of the induce region
%    \item Results on extreme points
%    \item The $K$th-best algorithm
%\end{enumerate}

\newpage
\section{Problems}\label{ch04:sec:Problems}

\begin{problem}[Correspondence of local optima] Consider the problem
\[
(P)=\left\{\begin{array}{cl}
    \displaystyle\min_{x,y} & (x-1)^2 + (y-1)^2 \\
    s.t. & y\text{ solves } \left\{\begin{array}{cl}
    \displaystyle\min_{y} & -y \\
    s.t. & \begin{cases}
        y+x\leq 1,\\
        y-x\leq 1.
    \end{cases} 
\end{array}\right.
\end{array}\right.
\]
\begin{enumerate}[label=(\alph*)]
    \item Show that, for every $x\in \R$, the lower level problem is constraint qualified.
    \item Write the MPCC reformulation of $(P)$.
    \item Compute all the critical points of MPCC.
    \item Compute the global optima of $(P)$.
    \item Show that the point $(x,y) = (0,1)$ admits multipliers $(\mu_1,\mu_2)$ such that $(0,1,\mu_1,\mu_2)$ is a local optima for the MPCC reformulation and that $(0,1)$ fail to be local optima of $(P)$.
\end{enumerate} 
\end{problem}
\vspace{0.5cm}

\begin{problem}[Reformulations in linear programming] Consider the linear bilevel programming problem 
    \[
\begin{array}{cl}
    \displaystyle\min & 7x - 2y_1 + 3y_2 \\
    s.t. & \begin{cases}
     x\geq 0,\\   
    y\text{ solves } \left\{\begin{array}{cl}
    \displaystyle\min_{y} & 2y_1 - y_2 \\
    s.t. & \begin{cases}
        4x + y_1 + y_2 \leq 3,\\
        2x - 2y_1 + 5y_2 \leq 5,\\
        3x - y_1 - 2y_2 \leq 1,\\
        y_1,y_2\geq 0.
    \end{cases} 
\end{array}\right.
\end{cases}
\end{array}
\]
\begin{enumerate}[label=(\alph*)]
    \item For every $x\geq 0$, compute the dual problem of the lower-level problem.
    \item Identify all the vertices of the dual problem. Detail your procedure and computations. If you use a software, include the code.
    \item Write the MPCC formulation and a valid Big-M reformulation.
    \item Solve the Big-M formulation (MIP formulation) and the MPCC formulation (with SOS1-type constraints) using a solver. \textbf{Recommended:} \texttt{Julia} + \texttt{Gurobi} with academic license.
\end{enumerate} 
\end{problem}
\chapter{Extensions}
\label{Chapter05:Extensions}

\section{Mixed-Integer bilevel programming}
\label{ch05:Mixed-Integer}

A natural extension of linear bilevel programming is to admit integrality constraints for variables of the leader and/or the follower. A general formulation of a Mixed-Integer bilevel programming problem is given by

\begin{equation}\label{ch05:eq:MIBP}
        \left\{\begin{array}{rl}
           \displaystyle\min_{x,y}  & c_l^{\top}x + d_l^{\top}y  \\
            s.t. & \left\{\begin{array}{l}
                 A_lx\leq b_l,  \\
                 x\in X,\\
                 y \text{ solves } \left\{\begin{array}{rl}
           \displaystyle\min_{y}  & c_f^{\top}y \\
            s.t. & \begin{cases}A_fx + B_fy \leq b_f,\\ y\in Y,\end{cases}
        \end{array}\right.
            \end{array}\right.
        \end{array}\right.
        \tag{MIBP}
    \end{equation} 
where $X$ and $Y$ are the integrality constraints. That is, for some subsets $I\subset [p]$ and $J\subset [q]$,
\begin{align*}
    X&= \{x\in \R^p\ :\ x_i \in Z,\, \forall i\in I\},\\
    Y&= \{y\in \R^q\ :\ y_j \in Z,\, \forall j\in J\}.
\end{align*}
In general, \eqref{ch05:eq:MIBP} might fail to have solutions, as the following example shows.

\begin{example}\label{ch05:ex:MIBP-without-solutions} Consider the problem
\[
\left\{ \begin{array}{cl}
   \displaystyle \min_{x,y}  &x-y  \\
    s.t. & \begin{cases}
        x\in [0,1]\\
        y\text{ solves }\left\{\begin{array}{cl}
             \displaystyle\min_y& y  \\
             s.t. & y\in \{0,1\},\, y\geq x. 
        \end{array}\right.
    \end{cases} 
\end{array}\right.
\]
For a given $x\in [0,1]$, the unique solution of the lower level problem is given by $y = \lceil x\rceil$. Thus, the MIBP problem can be reduced to
\[
\min_{x\in [0,1]} x - \lceil x\rceil,
\]
which fails to have solution: the optimal value is $-1$ but is never reached since $x-\lceil x\rceil$ is discontinuous at $0$.
\end{example}

\begin{proposition} Consider problem \eqref{ch05:eq:MIBP} and suppose that one of the following holds:
\begin{enumerate}
    \item[(i)] Only the leader has integer variables (i.e. $J = \emptyset$).
    \item[(ii)] All the variables of the leader are integer ones (i.e. $I=[p]$).
\end{enumerate}
    Suppose than the set
    \[
    D = \left\{ (x,y)\, :\ , \begin{array}{l}
         A_lx\leq b_l\\
         A_fx+B_fy\leq b_f
    \end{array} \right\}
    \]
    is compact. Then, \eqref{ch05:eq:MIBP} is either infeasible or it admits a solution.
\end{proposition}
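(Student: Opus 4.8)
The plan is to handle the two cases separately, but with the same underlying strategy: reduce the bilevel problem to a finite family of continuous optimization problems over compact sets, where the relevant continuity of the (optimistic) value function $\varphi^o$ can be established via the Berge Maximum theorem, and then apply Weierstrass.

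\textbf{Case (i): $J = \emptyset$.} Here the follower's variable $y$ is purely continuous. For each fixed integer part of $x$, say $x_I \in \Z^{|I|}$, let $X_{x_I}$ denote the slice of $X$ with that integer part; this is the intersection of $D$'s $x$-projection with $\{x : x_i = (x_I)_i,\ i\in I\}$, a compact polytope, and there are only finitely many values of $x_I$ for which it is nonempty (by compactness of $D$). On each such slice, the lower level is a linear program in $y$ over $K(x) = \{y : A_fx + B_f y \le b_f\}$; by Proposition \ref{ch03:prop:closednessConstrained}, Theorem \ref{ch03:thm:LSC-of-Linear} and Corollary \ref{ch03:cor:USC-ClosedGraph}, the map $K$ is continuous with nonempty compact values on the (nonempty) slice, so by Theorem \ref{ch03:thm:Berge-secondForm} the solution map $S$ is upper semicontinuous with compact values. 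Exactly as in the proof of Theorem \ref{ch03:thm:Existence-Optimistic}, $\gph S$ restricted to this slice is compact, and $\theta_l$ is continuous on it, so the restricted problem attains its minimum (or $S$ is empty on the whole slice, contributing nothing). Taking the best value over the finitely many slices gives a global solution, unless every slice is infeasible, i.e.\ the problem is infeasible.

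\textbf{Case (ii): $I = [p]$.} Now $x$ ranges over $X \cap \{x : A_l x \le b_l\}$ with \emph{all} coordinates integer, and since $D$ is compact this is a \emph{finite} set of points $x^1,\dots,x^N$. For each such $x^k$, feasibility of $x^k$ means $S(x^k) \ne \emptyset$, i.e.\ the lower-level LP $\min\{c_f^\top y : A_f x^k + B_f y \le b_f,\ y \in Y\}$ admits a solution. If $Y$ has continuous components only, this is an LP over a compact set and has a solution whenever feasible; if $Y$ imposes integrality, $S(x^k)$ is a set of integer points inside the compact set $\{y : A_f x^k + B_f y \le b_f\}$, hence finite, and nonempty iff there is a feasible integer point. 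Either way $S(x^k)$, when nonempty, is a nonempty compact set and $y\mapsto \theta_l(x^k,y)$ attains its minimum over it. So $\varphi^o(x^k) = \min_{y\in S(x^k)} \theta_l(x^k,y)$ is finite for every feasible $x^k$; minimizing over the finitely many feasible $x^k$ (if any exist) yields a solution, and if none is feasible the problem is infeasible.

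\textbf{Main obstacle.} The delicate point is case (i): ensuring that on each integer slice the restriction of $S$ is upper semicontinuous with compact values and hence has compact graph, so that the argument of Theorem \ref{ch03:thm:Existence-Optimistic} applies verbatim. This requires checking that slicing $X$ by fixing integer coordinates does not destroy the polyhedral/compactness hypotheses needed for Theorems \ref{ch03:thm:LSC-of-Linear} and \ref{ch03:thm:Berge-secondForm} — it does not, since a slice of a polytope is again a polytope and a closed subset of a compact set is compact. One should also be slightly careful that ``$y$ solves the lower level'' in \eqref{ch05:eq:MIBP} means $y \in S(x)$ in the optimistic (joint-minimization) sense, which is exactly what makes the reduction to optimizing $\theta_l$ over $\gph S$ legitimate; if the problem were posed pessimistically the conclusion could fail, as Example \ref{ch02:example:MultipleSolutionsFollower} already warns.
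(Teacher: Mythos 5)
Your proof follows essentially the same route as the paper's: enumerate the finitely many integer assignments permitted by the compactness of $D$, reduce to a finite family of subproblems each of which is solvable (an optimistic linear bilevel problem in case (i), a bounded mixed-integer follower problem in case (ii)), and take the best value over the feasible ones; the only difference is that you re-derive the solvability of each continuous subproblem via the Berge machinery of Chapter \ref{Chapter03:Existence} rather than invoking solvability of linear bilevel programs as a known fact. One small slip: in case (ii) the claim that $S(x^k)$ is \emph{finite} when $Y$ imposes integrality fails in the genuinely mixed case $\emptyset \neq J \subsetneq [q]$ (it is then a finite union of polytope faces), but the compactness of $S(x^k)$, which is all your argument actually uses, still holds.
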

\begin{proof} Assume first that $(i)$ holds, and let us write $x_I$ to denote the vector of integer variables of the leader. Since $D$ is compact, the variables $x_I$ are bounded, and so they can take finitely many values. Let $\{v_1,\ldots,v_N\}\in \Z^I$ be all the possible values of $x_I$. Then, for each $k\in [N]$, we can define the problem 
    \[
    (\text{MIBP}_k) = \left\{\begin{array}{rl}
           \displaystyle\min_{x,y}  & c_l^{\top}x + d_l^{\top}y  \\
            s.t. & \left\{\begin{array}{l}
                 A_lx\leq b_l,  \\
                 x_I = v_k,\\
                 y \text{ solves } \left\{\begin{array}{rl}
           \displaystyle\min_{y}  & c_f^{\top}y \\
            s.t. & A_fx + B_fy \leq b_f.
        \end{array}\right.
            \end{array}\right.
        \end{array}\right.
    \]
    Then, $(\text{MIBP}_k)$ is an optimistic bilevel linear programming problem. Thus, either it is infeasible, or it has a solution $(x^*_k,y^*_k)$. The conclusion follows by noting that either all problems $(\text{MIBP}_k)$ are infeasible making \eqref{ch05:eq:MIBP} infeasible as well, or \eqref{ch05:eq:MIBP} admits a solution, which is obtained as
    \[
    \min\{ c_l^{\top}x^*_k + d_l^{\top}y^*_k\, :\, (x_k^*,y_k^*)\text{ solves }(\text{MIBP}_k) \}.
    \]

    Assume first that $(ii)$ holds. Again, the integer variable $x$ can only have finitely many values. Let $\{v_1,\ldots,v_N\}\in \Z^p$ be all the possible values of $x$. For each value, we can consdier the follower's problem given by 
    \[
    (F_k) = \left\{\begin{array}{rl}
           \displaystyle\min_{y}  & c_f^{\top}y \\
            s.t. & \begin{cases}
                B_fy \leq b_f-A_fv_k,\\
                \forall j\in J,\, y_j\in \Z.
            \end{cases}
        \end{array}\right.
    \]
    Each problem $(F_k)$ is a bounded Mixed-Integer Linear Programming problem. Thus, it is either infeasible, or it admits a solution $y_k$. The conclusion follows exactly as in the previous case: either all follower's problems are infeasible making \eqref{ch05:eq:MIBP} infeasible as well, or \eqref{ch05:eq:MIBP} admits a solution which is given by the best pair $(v_k,y_k)$ among those where $y_k$ solves $(F_k)$.
\end{proof}

\begin{note}{The challenges of MIBP}
Mixed-Integer Bilevel programming is an extremely challenging field of research. In particular, it is well-known that 
\begin{itemize}
    \item Check feasibility of a pair $(x,y)$ in \eqref{ch05:eq:MIBP} is NP-hard.
    \item Decision versions of \eqref{ch05:eq:MIBP} are in fact $\Sigma_2^p$-hard \citep{Jeroslow1985}.
\end{itemize}
In particular, this means that there is no hope for efficient algorithms, unless $P=NP$. However, methods to deal with this family of problems is an active field of research in computational optimization \citep{Kleinert2021SurveyMIBP}.
\end{note}
\section{Bilevel games}
\label{ch05:Games}

A natural extension of optimization problems are Nash games, also known as Nash Equilibrium problems. They are an extension of the 2-players games studied by Cournot (See Chapter~\ref{Chapter02:Model}), and they were heavily developed during the 20th century, probably with the highest points in the 50's, with the contributions \citep{Nash1950} and \citep{ArrowDebreu1954}. Nowadays, one can access to well-curated monographs such as \citep{Tadelis2013GameTheory} for a primer in the field.
\begin{formulation}{Nash game (or Nash Equilibrium Problem, NEP)}
The model of a Nash game is the following. We consider $N$ players, each of them controlling a decision vector $x_i\in X_i\subseteq\R^{n_i}$, with $i\in [N]$. We set $n= \sum_{i=1}^N n_i$ and we call a vector $x = (x_i\, :\, i\in [N])$ a decision profile of the game.\\ 

For a player $i\in [N]$ we usually write $x=(x_i,x_{-i})$ to emphasize the decision vector $x_i$ within the profile $x$, while $x_{-i}$ denotes the decision vectors of all other players, that is,  $x_{-i}=(x_j\,:\, j\in[N]\setminus\{i\})$. We extend this notation to also write $\R^{n_{-i}} := \prod_{j\neq i} \R^{n_j}$ and $X_{-i}:= \prod_{j\neq i} X_j$.\\ 

Each player $i\in [N]$ seeks to minimize a cost function $\theta_i:X_i\times X_{-i}\to \R$, depending on the decision vector $x_i\in X_i$, but also on the other players' profile $x_{-i}\in X_{-i}$.\\ 

Then, for a given profile $x_{-i}\in X_{-i}$, player $i$ aims to solve
\[
P_i(x_{-i}) = \left\{\begin{array}{cl}
    \displaystyle\min_{x_i} & \theta_i(x_i,x_{-i})  \\
     s.t. & x_i\in X_i. 
\end{array}\right.
\]
A profile $x^*\in X = \prod_{i=1}^N X_i$ is said to be an \textbf{equilibrium} if no player has incentives to deviate unilaterally of it. That is,
\[
\forall i\in [N],\,x_i^*\text{ solves }P_i(x_{-i}^*).
\]
\end{formulation}

\begin{example}[Cournot Duopoly revisited]\label{ch05:ex:CournotRevisited}
   The Cournot Duopoly we presented in Chapter~\ref{Chapter02:Model} is a good example of a Nash Equilibrium problem. Recall that the formulation is given by

     \begin{equation}\label{ch05:eq:CournotFormulation}
    \begin{array}{|c|c|}
        \hline
        &\\
        \begin{array}{cl}
            \displaystyle\max_{q_1}   & (p_0 -\alpha(q_1+q_2)-c)q_1\\
            \text{s.t.} & q_1\geq 0.
        \end{array}
        &
        \begin{array}{cl}
            \displaystyle\max_{q_2}   & (p_0 -\alpha(q_1+q_2)-c)q_2\\
            \text{s.t.} & q_2\geq 0.
        \end{array}\\
        
        &\\
        \hline
        \text{Problem }P_1(q_2)&\text{Problem }P_2(q_1)\\
        \hline
    \end{array}
\end{equation}
Recall that the unique equilibrium was given by $q^*=(q_1^*,q_2^*)$ with
\[
q_1^* = q_2^* = \frac{p_0 - c}{3\alpha}.
\]
Thus, for example, for $p_0 = 10$, $c=1$ and $\alpha=1$, the equilibrium is given by $q^*=(3,3)$.
\end{example}

In a Nash game, the action of players affect each other on the cost function only, and each player decides her actions over a static set. An extension of this model, are the so-called Generalized games, where the actions of players also affect the feasible sets of each other.
\begin{formulation}{Generalized Nash Equilibrium Problem (GNEP)} We consider $N$ players and for each $i\in [N]$, the set $X_i\in\R^{n_i}$ and the cost function $\theta_i$ as in the NEP description. For each player $i\in [N]$ we also consider the constraint map $K_i:X_{-i}\tto X_i$, where $K_i(x_{-i})$ represents the feasible set for player $i$ given a decision profile $x_{-i}$.

Then, for a given profile $x_{-i}\in X_{-i}$, player $i$ aims to solve
\[
P_i(x_{-i}) = \left\{\begin{array}{cl}
    \displaystyle\min_{x_i} & \theta_i(x_i,x_{-i})  \\
     s.t. & x_i\in K_i(x_{-i}). 
\end{array}\right.
\]
Let $K:X\tto X$ given by $K(x):= \prod_{i=1}^N K_i(x_{-i})$. A profile $x^*\in X = \prod_{i=1}^N X_i$ is said to be an \textbf{equilibrium} of the GNEP if $x^* \in K(x^*)$  and
\[
\forall i\in [N],\,x_i^*\text{ solves }P_i(x_{-i}^*).
\]
\end{formulation}

\begin{example}[Cournot Duopoly with market constraints]\label{ch05:ex:CournotMarketConstraints}
   Consider the Cournot Duopoly \eqref{ch05:eq:CournotFormulation} with an extra constraint of a maximum Market supply of $K>0$: that is, the total amount of the produced good must satisfy $q_1+q_2\leq K$. Then, the formulation is given by  

     \begin{equation}\label{ch05:eq:CournotFormulation-GNEP}
    \begin{array}{|c|c|}
        \hline
        &\\
        \begin{array}{cl}
            \displaystyle\max_{q_1}   & (p_0 -\alpha(q_1+q_2)-c)q_1\\
            \text{s.t.} & \begin{cases}
                q_1 + q_2\leq K,\\
                q_1\geq 0.
            \end{cases}
        \end{array}
        &
        \begin{array}{cl}
            \displaystyle\max_{q_2}   & (p_0 -\alpha(q_1+q_2)-c)q_2\\
            \text{s.t.} & \begin{cases}
                q_1 + q_2\leq K,\\
                q_2\geq 0.
            \end{cases}
        \end{array}\\
        
        &\\
        \hline
        \text{Problem }P_1(q_2)&\text{Problem }P_2(q_1)\\
        \hline
    \end{array}
\end{equation}
Then, the new formulation is a GNEP. For $p_0=10$, $\alpha = 1$, $c=1$ and $K=5$, the unique equilibrium $q^*=(3,3)$ is no longer feasible. In this case, the new set of equilibria are given by the segment $[(1,4),(4,1)]$ in $\R^2$. 

Indeed, for $q_2\geq 0$ fixed, the problem $P_1(q_2)$ is constraint qualified and its $KKT$ equations are given by
\begin{align*}
    9 - q_2 - 2q_1 - \mu_1 + \mu_2 &= 0,\\
    \mu_1(q_1 + q_2 - 5) &= 0,\\
    \mu_2(-q_1) &= 0,\\
    q_1 + q_2 - 5 &\leq 0,\\
    -q_1 &\leq 0,\\
    \mu_1,\mu_2&\geq 0.
\end{align*}

If $q_2 > 5$, the problem is infeasible, let us assume that $q_2\leq 5$. Thus, $q_1\in [0, 5-q_2]$. Then, we distinguish the following cases.
\begin{itemize}
    \item If $\mu_2 = 0$ and $\mu_1>0$: Then $q_1 = 5-q_2$.
    \item If $\mu_2 = 0$ and $\mu_1=0$: Then $q_1 = (9-q_2)/2$. But this is only possible if $q_2\leq 1$.
    \item If $\mu_2>0$, then $q_1 = 0$. This yields that either $q_2 = 5$ or $\mu_1 = 0$. In the first case, $q_1 = 5-q_2$. In the second case, we obtain that $9-q_2 = - \mu_2 \leq 0$. Thus, $q_2\geq 9$ which is not possible.
\end{itemize}
We deduce that the best response  function of player 1 is given by
\[
B_1(q_2) := \begin{cases}
    \frac{9-q_2}{2}&\text{ if }q_2\in [0,1),\\
    5-q_2 &\text{ if }q_2\in [1,5].
\end{cases}
\]
By symmetry,
\[
B_2(q_1) := \begin{cases}
    \frac{9-q_1}{2}&\text{ if }q_1\in [0,1),\\
    5-q_1 &\text{ if }q_1\in [1,5].
\end{cases}
\]
Then, we deduce that the set of equilibria are given by 
\[
\mathcal{E}=\{ (q_1,q_2) \,:\, q_1+q_2 = 5,\text{ and } q_1,q_2\in [1,4] \}.
\]
Indeed, on the one hand, every pair $(q_1,q_2)\in\mathcal{E}$ verifies that $q_i\in [1,5]$ and $B_{i}(q_{-i}) = 5-q_{-i} = q_i$ for $i=1,2$. On the other hand:

If $q_1<1$, then $q_2$ must coincide with $B_2(q_1) = \frac{9-q_1}{2}>4$. But then
    \[
    B_1(q_2) = 5 - \frac{9 - q_1}{2} = \frac{1}{2} + \frac{q_1}{2} > q_1. 
    \]
    Thus, $(q_1,q_2)$ with $q_1<1$ cannot be an equilibrium (and by symmetry, the same holds for $q_2$).
If $q_1>4$, then $q_2$ must coincide with $B_2(q_1) =5-q_1<1$. But then
    \[
    B_1(q_2) = \frac{9 - (5-q_1)}{2} = \frac{q_1 + 4}{2} < q_1.
    \]
Thus, $(q_1,q_2)$ with $q_1>4$ cannot be an equilibrium (and by symmetry, the same holds for $q_2$).

We conclude that the set $\mathcal{E}$, which is exactly the interval $[(1,4),(4,1)]$, is the set of all equilibria of the problem.
\end{example}

Probably the most celebrated result in terms of existence of equilibria for GNEP is the Arrow-Debreu theorem, presented in \citep{ArrowDebreu1954}.

\begin{theorem}[Arrow, Debreu, 1954] Suppose that for each $i\in [N]$ we have that
\begin{enumerate}
    \item[(i)] $X_i$ is nonempty, convex and compact.
    \item[(iii)] $K_i:X_{-i}\tto X_i$ is both upper semicontinuous and lower semicontinuous, with convex closed values.
    \item[(iii)] The function $\theta_i$ is (jointly) continuous and for each $x_{-i}\in X_{-i}$, the function $\theta_i(\cdot, x_{-i})$ is convex on $K_i(x_{-i})$.
\end{enumerate}
Then the GNEP described by $\{X_i\}_{i\in [N]}$, $\{\theta_i\}_{i\in [N]}$ and $\{K_i\}_{i\in [N]}$, admits an equilibrium.   
\end{theorem}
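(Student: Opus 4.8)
The plan is to reduce the existence of a GNEP equilibrium to a fixed point of an aggregate best-response map, and then to obtain that fixed point from Kakutani's theorem, using the Berge Maximum theorem to supply the needed continuity. First I would introduce, for each $i\in[N]$, the best-response map $B_i:X_{-i}\tto X_i$ given by
\[
B_i(x_{-i}) = \argmin_{x_i}\{\,\theta_i(x_i,x_{-i})\ :\ x_i\in K_i(x_{-i})\,\},
\]
and the aggregate map $\Phi:X\tto X$, $X=\prod_{i=1}^N X_i$, defined by $\Phi(x)=\prod_{i=1}^N B_i(x_{-i})$ where $x=(x_i)_i$ and $x_{-i}$ is the induced sub-profile. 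The point of this construction is that a fixed point $x^*\in\Phi(x^*)$ satisfies $x_i^*\in B_i(x_{-i}^*)\subset K_i(x_{-i}^*)$ for every $i$, hence $x^*\in\prod_i K_i(x_{-i}^*)=K(x^*)$ and each $x_i^*$ solves $P_i(x_{-i}^*)$; that is precisely the definition of an equilibrium. So the whole statement reduces to producing a fixed point of $\Phi$.

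Next I would verify the Kakutani hypotheses for $\Phi$ on $X$. By (i), $X$ is nonempty, convex and compact. For each $x_{-i}$, the set $K_i(x_{-i})$ is a closed subset of the compact set $X_i$, hence compact, and it is nonempty and convex by (ii); since $\theta_i(\cdot,x_{-i})$ is continuous by (iii), Weierstrass' theorem gives $B_i(x_{-i})\neq\emptyset$, and since $\theta_i(\cdot,x_{-i})$ is convex on the convex set $K_i(x_{-i})$, the minimizer set $B_i(x_{-i})$ is convex; being a closed subset of the compact $K_i(x_{-i})$, it is also compact. Taking products, $\Phi(x)$ is nonempty, convex and compact for every $x\in X$.

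The continuity of $\Phi$ is where the earlier machinery enters. Applying the second form of the Berge Maximum theorem (Theorem~\ref{ch03:thm:Berge-secondForm}) with parameter space $X_{-i}$, ambient space $X_i$, objective $\theta_i$, and constraint map $K_i$ — which is continuous with nonempty compact values by (i)–(ii) — each $B_i:X_{-i}\tto X_i$ is upper semicontinuous, hence closed by Corollary~\ref{ch03:cor:USC-ClosedGraph} since it has compact values. Now $\gph\Phi=\bigcap_{i=1}^N\{(x,y)\in X\times X:\ y_i\in B_i(x_{-i})\}$, and each of these sets is the preimage of the closed set $\gph B_i$ under the continuous map $(x,y)\mapsto(x_{-i},y_i)$, hence closed; so $\gph\Phi$ is closed in the compact space $X\times X$. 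Thus $\Phi$ has closed graph, nonempty convex compact values, and maps the nonempty compact convex set $X$ into itself, and Kakutani's fixed-point theorem yields $x^*\in\Phi(x^*)$, which by the first paragraph is the desired equilibrium.

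The one external ingredient, and the step I expect to be the real ``hard part'' in the sense of not being self-contained, is Kakutani's fixed-point theorem itself, which is not developed in these notes; everything else is a routine check resting on the Berge Maximum theorem. A minor point of care is the translation between ``closed graph'' and the exact hypotheses under which one invokes Kakutani (some versions demand upper semicontinuity with compact convex values), but in this fully compact setting these coincide by Corollary~\ref{ch03:cor:USC-ClosedGraph}. One should also note that nonemptiness of the values of $K_i$ is implicitly required in (ii), as it is used to guarantee $\Phi$ is nonempty-valued.
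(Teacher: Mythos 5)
The paper does not actually prove this theorem: it is stated as a cited result from Arrow and Debreu (1954), so there is no internal proof to compare against. Your argument is the classical one — reduce the equilibrium to a fixed point of the aggregate best-response map $\Phi(x)=\prod_i B_i(x_{-i})$ and invoke Kakutani — and it is correct as written. The reduction in your first paragraph matches the paper's definition of a GNEP equilibrium exactly (a fixed point gives both $x^*\in K(x^*)$ and optimality of each $x_i^*$), and the verification of the Kakutani hypotheses is sound: nonemptiness and compactness of $B_i(x_{-i})$ from Weierstrass on the compact set $K_i(x_{-i})\subset X_i$, convexity from the convexity of $\theta_i(\cdot,x_{-i})$ on the convex set $K_i(x_{-i})$, and upper semicontinuity of $B_i$ from Theorem~\ref{ch03:thm:Berge-secondForm}, which then yields closedness of $\gph\Phi$ via Corollary~\ref{ch03:cor:USC-ClosedGraph}. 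You correctly flag the two points that deserve care: hypothesis (ii) must be read as asserting \emph{nonempty} convex closed values (otherwise $B_i$ could be empty-valued and the whole construction collapses), and Kakutani's fixed-point theorem is a genuine external input not developed in these notes — in the fully compact setting the closed-graph and usc-with-compact-convex-values formulations of its hypotheses are indeed interchangeable, as you note. The proof fits naturally with the set-valued machinery the notes build in Chapter 3, which is presumably why the author felt free to state the theorem without proof.
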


Now, the idea is to use the concept of GNEP to define a bilevel game. The structure will be given a by a set of players, the leaders and the followers The leaders will produce a profile $x$ that parametrized the game of the followers. The followers then will react with an equilibrium $y\in \mathcal{E}(x)$.

\begin{figure}[ht]
    \centering
   \definecolor{zzttqq}{rgb}{0.26666666666666666,0.26666666666666666,0.26666666666666666}
   \begin{tikzpicture}[line cap=round,line join=round,>=triangle 45,x=1.0cm,y=1.0cm,scale=0.85]
	
	%Leaders
	\fill[fill=black,fill opacity=0.10000000149011612] (-3.,3.+3) -- (-3.,2.+3) -- (-2.,2.+3) -- (-2.,3.+3) -- cycle;
	\fill[fill=black,fill opacity=0.10000000149011612] (-1.,3.+3) -- (-1.,2.+3) -- (0.,2.+3) -- (0.,3.+3) -- cycle;
	\fill[fill=black,fill opacity=0.10000000149011612] (3.,3.+3) -- (3.,2.+3) -- (4.,2.+3) -- (4.,3.+3) -- cycle;
	\draw[dashed,color=zzttqq] (-3.6,3.4+3) -- (-3.6,1.56+3) -- (4.44,1.56+3) -- (4.44,3.4+3) -- cycle;
	
	\draw (-3.,3.+3)-- (-3.,2.+3)-- (-2.,2.+3) -- (-2.,3.+3)-- cycle;
	\draw (-1.,3.+3)-- (-1.,2.+3)-- (0.,2.+3)-- (0.,3.+3)--cycle;
	\draw (3.,3.+3)-- (3.,2.+3)-- (4.,2.+3)-- (4.,3.+3)--cycle;
	
	\draw  (-1.,2.44+3)-- (-2.,2.44+3);
	\draw (0.,2.38+3)-- (0.7,2.38+3);
	\draw [dashed] (0.7,2.38+3)-- (2.3,2.38+3);
	\draw (2.3,2.38+3)-- (3.,2.38+3);
	
	\draw[color=black] (-2.5,2.5+3) node {\scriptsize $L_1$};
	\draw[color=black] (-0.5,2.5+3) node {\scriptsize $L_2$};
	\draw[color=black] (3.5,2.5+3) node {\scriptsize $L_n$};
	\draw[color=black] (-5.5,5.5) node {\small Leaders};
	
	%%%%%%%%%%%%%%%%%%

	%Followers
	\fill[fill=black,fill opacity=0.10000000149011612] (-3.,3.) -- (-3.,2.) -- (-2.,2.) -- (-2.,3.) -- cycle;
	\fill[fill=black,fill opacity=0.10000000149011612] (-1.,3.) -- (-1.,2.) -- (0.,2.) -- (0.,3.) -- cycle;
	\fill[fill=black,fill opacity=0.10000000149011612] (3.,3.) -- (3.,2.) -- (4.,2.) -- (4.,3.) -- cycle;
	\draw[dashed,color=zzttqq] (-3.6,3.4) -- (-3.6,1.56) -- (4.44,1.56) -- (4.44,3.4) -- cycle;
	
	\draw (-3.,3.)-- (-3.,2.)-- (-2.,2.) -- (-2.,3.)-- cycle;
	\draw (-1.,3.)-- (-1.,2.)-- (0.,2.)-- (0.,3.)--cycle;
	\draw (3.,3.)-- (3.,2.)-- (4.,2.)-- (4.,3.)--cycle;
	
	\draw  (-1.,2.44)-- (-2.,2.44);
	\draw (0.,2.38)-- (0.7,2.38);
	\draw [dashed] (0.7,2.38)-- (2.3,2.38);
	\draw (2.3,2.38)-- (3.,2.38);
    
	\draw[color=black] (-2.5,2.5) node {\scriptsize $F_1$};
	\draw[color=black] (-0.5,2.5) node {\scriptsize $F_2$};
	\draw[color=black] (3.5,2.5) node {\scriptsize $F_m$};
	
	\draw[color=black] (-5.5,2.5) node {\small Followers};
    
	%%%%%%%%%%%%%%%%%%
    %Lines
    
	\draw [->] (4.44/2-3.6/2-.5,1.56+3)-- (4.44/2-3.6/2-.5,3.4);
	\draw[color=black,left] (4.44/2-3.6/2-.7,1.56/2+6.4/2) node {\scriptsize $x$};

	\draw [<-] (4.44/2-3.6/2+.5,1.56+3)-- (4.44/2-3.6/2+.5,3.4);
	\draw[color=black,right] (4.44/2-3.6/2+.7,1.56/2+6.4/2) node {\scriptsize $y\in \mathcal{E}(x)$};
	
	\end{tikzpicture}
    \caption{Structure of Multi-Leader-Follower games.}\label{ch05:fig:MLF}
\end{figure}
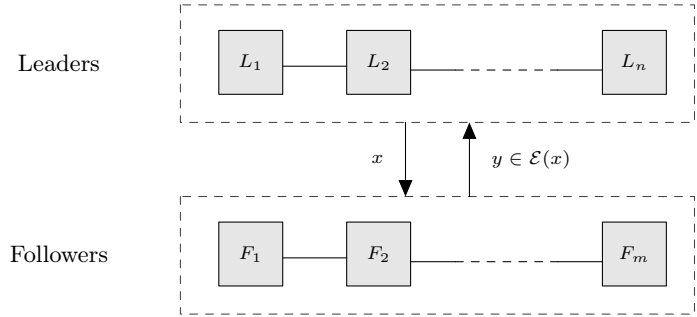 

\begin{formulation}{Bilevel Games}
    We consider a set of leaders $I$ and a set of followers $J$. Each leader $i\in I$ controls a decision vector $x_i\in X_i\subset\R^{p_i}$, and each follower $j\in J$ controls a decision vector $y_j\in Y_j\subset\R^{q_j}$. We set $X = \prod_{i\in I}X_i$ and $Y=\prod_{j\in J}Y_j$.\\ 
    
    Each leader $i\in I$ aims to minimize a cost function $\theta_i: X_i\times X_{-i}\times Y\to \R$ and each follower $j\in J$ aims to minimize a function $f_j: Y_j\times Y_{-j}\times X\to \R$. For each follower, we consider a constraint map $K_j:X\times Y_{-j}\tto Y_j$.\\

    Then, for a given profile $x\in X$, the followers aim to solve the Generalized Nash Equilibrium problem $GNEP(x)$ given by the problems
    \[
    P_j(x,y_{-j}) = \left\{\begin{array}{cl}
    \displaystyle\min_{y_j} & f_j(y_j,y_{-j},x) \\
     s.t. & y_j\in K_j(x,y_{-j}). 
\end{array}\right.
    \]

    Then, each leader aims to solve 
    \[
    P_i(x_{-i}) = \left\{\begin{array}{cl}
    \displaystyle\min_{x_i} & \theta_i(x_i,x_{-i},y) \\
     s.t. & x_i\in X_i, y\in\mathcal{E}(x), 
\end{array}\right.
    \]
    where $\mathcal{E}(x)$ denotes the equilibrium set of $GNEP(x)$. A profile $x^*\in X$ is said to be a leaders' equilibrium if for each  $i\in I$, $x_i^*$ solves $P_i(x_{-i}^*)$.
\end{formulation}

The concept of bilevel games seems to be introduced in \citep{PangFukushima2005MLF}, under the name Multi-Leader-Follower games. Indeed, this name is very useful since we can distinguish two particular cases:
\begin{enumerate}
    \item When there is only one leader and multiple followers, we refer the bilevel game as a \textbf{Single-Leader-Multi-Follower (SLMF)} game.
    \item When there is only one follower and multiple leaders, we refer the bilevel game as a \textbf{Multi-Leader-Follower-Follower (MLSF)} game.
\end{enumerate}

Of course, the formulation of a Bilevel game is ill-posed, since there is no clear choice of $y\in \mathcal{E}(x)$, when the set of equilibria is not a singleton. One could describe optimistic/pessimistic leaders, but however such definition would induce inconsistencies. How to deal with this is a very delicate matter that is still subject of discussion in the literature (see, e.g., \citep{AusselSvensson2020Short}). 

However, for the particular case of SLMF games, it is easy to talk about optimistic and pessimistic approaches, since there is only one leader. For the case of an optimistic leader, the formulation can be written as
\begin{equation}\label{ch05:eq:SLMF}
    \left\{\begin{array}{cl}
      \displaystyle\min_{x,y}   & \theta(x,y)  \\
       s.t.  & \begin{cases}
           x\in X,\\
           y\in \mathcal{E}(x).
       \end{cases} 
    \end{array}\right.
    \tag{SLMF}
\end{equation}
With this formulation, existence of solutions for \eqref{ch05:eq:SLMF} can be reduced, as the single-follower counterpart, to the closedness of the set-valued map $\mathcal{E}:X\tto Y$. We present here the existence result of \citep{AusselSvensson2018EPCC}.

\begin{theorem} Consider \eqref{ch05:eq:SLMF} and suppose that:
    \begin{enumerate}
        \item[(i)] $\theta$ is lower semicontinuous and $X$ is compact.
        \item[(ii)] For each $j\in J$, $Y_j$ is compact and $f_j$ is continuous.
        \item [(iii)] For each $j\in J$, $K_j:X\times Y_{-j}\tto Y_j$ is lower semicontinuous and has closed graph.
    \end{enumerate}
   Then,  either \eqref{ch05:eq:SLMF} is infeasible or it admits a solution.
\end{theorem}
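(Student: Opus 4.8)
The plan is to mimic the proof of Theorem \ref{ch03:thm:Existence-Optimistic}: reduce the existence question for \eqref{ch05:eq:SLMF} to the compactness of the graph of the lower-level equilibrium map $\mathcal{E}:X\tto Y$. First I would note that $Y=\prod_{j\in J}Y_j$ is compact as a finite product of compact sets (hypothesis (ii)), so $X\times Y$ is compact by (i). The feasible set of \eqref{ch05:eq:SLMF} is exactly $\gph\mathcal{E}\subseteq X\times Y$; if it is empty the problem is infeasible, and otherwise, once $\gph\mathcal{E}$ is shown to be closed --- hence compact --- the lower semicontinuous objective $\theta$ attains its minimum on it by Weierstrass' theorem, and any minimizer solves \eqref{ch05:eq:SLMF}. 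So the entire burden of the proof is to show that $\mathcal{E}$ has closed graph.

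To do this I would take $(x_n,y_n)\in\gph\mathcal{E}$ with $(x_n,y_n)\to(x_0,y_0)\in X\times Y$ and check $y_0\in\mathcal{E}(x_0)$, splitting into feasibility and optimality. Feasibility is immediate from hypothesis (iii): for each $j\in J$ we have $y_{n,j}\in K_j(x_n,y_{n,-j})$ and $(x_n,y_{n,-j},y_{n,j})\to(x_0,y_{0,-j},y_{0,j})$, so closedness of $\gph K_j$ forces $y_{0,j}\in K_j(x_0,y_{0,-j})$, i.e. $y_0\in K(x_0)$. For optimality I would fix $j$ and an arbitrary competitor action $z_j\in K_j(x_0,y_{0,-j})$, apply the sequential characterization of lower semicontinuity (Theorem \ref{ch03:thm:seq-LSC}) to $K_j$ at $(x_0,y_{0,-j})$ along the sequence $(x_n,y_{n,-j})\to(x_0,y_{0,-j})$ to obtain $n_0\in\N$ and $z_{n,j}\in K_j(x_n,y_{n,-j})$ with $z_{n,j}\to z_j$, and then pass to the limit in the equilibrium inequality
\[
f_j(y_{n,j},y_{n,-j},x_n)\le f_j(z_{n,j},y_{n,-j},x_n),\qquad n\ge n_0,
\]
using the joint continuity of $f_j$ from (ii). This yields $f_j(y_{0,j},y_{0,-j},x_0)\le f_j(z_j,y_{0,-j},x_0)$; as $z_j\in K_j(x_0,y_{0,-j})$ and $j\in J$ are arbitrary, $y_0\in\mathcal{E}(x_0)$, so $\gph\mathcal{E}$ is closed.

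The hard part will be the optimality step: passing to the limit in the players' optimality conditions requires, for each fixed action $z_j$ that is feasible at the limit profile, producing nearby actions feasible along the perturbed profiles $(x_n,y_{n,-j})$, and this is exactly where the lower semicontinuity of each $K_j$ is indispensable --- a closed graph alone would not permit this approximation, and the conclusion could genuinely fail. It is worth noting, in contrast with Theorem \ref{ch03:thm:Existence-Optimistic}, that no upper semicontinuity of the $K_j$ is needed here, since feasibility of the limit profile comes for free from the closed-graph hypothesis, and one does not need any local precompactness of $\mathcal{E}$ because compactness is supplied globally by $X\times Y$.
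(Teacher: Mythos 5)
Your proposal is correct and follows essentially the same route as the paper: both reduce the problem to showing that $\gph\mathcal{E}$ is closed (hence compact inside $X\times Y$) and then invoke Weierstrass, with the closedness argument combining the closed graph of each $K_j$ for feasibility of the limit and the lower semicontinuity of each $K_j$ to approximate an arbitrary competitor action $z_j$ and pass to the limit in the optimality inequality via continuity of $f_j$. The only cosmetic difference is that the paper first decomposes $\mathcal{E}(x)=\bigcap_{j\in J}S_j(x)$ and proves each $S_j$ has closed graph, whereas you verify closedness of $\gph\mathcal{E}$ directly for all $j$ at once; the content is identical.
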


\begin{proof}
    Let us assume that \eqref{ch05:eq:SLMF}  is feasible. We only need to show that in this case, there is a solution. Let us show first that the set-valued map GNEP has closed graph, thus defining a closed constraint set for the leader. Let us observe that we can write 
    \[
    \mathcal{E}(x) = \bigcap\limits_{j\in J} S_{j}(x)
    \]
    with 
    \[
    S_j(x) := \big\{(y_j,y_{-j})  \;\mid\; y_j\in \text{argmin}_{z} \{f_j (x,z,y_{-j}) \;\mid\; z_j \in K_j (x,y_{-j})\} \big\}.
    \]
    
    It is sufficient to prove that of each $j\in J$, the map $S_j:X\tto Y$ has closed graph. Let us
    fix $j\in J$ and take sequences $(x_k)_k$  in $X$ and $(y_k)_k$ in $Y$ converging respectively to $x$ and $y$, and such that $y_k  \in S_j(x_k)$ for all $k \in \N$. We want to prove that $y \in S_j(x)$. Note that
    \[
    (x_k,y_k) \in \gph S_j \implies y_{j,k}\in K_{j}(x_k,y_{-j,k})\implies (x_k,y_k) \in \gph K_j.
    \]
    Thus, since $K_j$ has closed graph, we get that $y_j \in K_j(x,y_{-j})$. Take $z_j \in  K_j(x,y_{-j})$. By lower semicontinuity of the set-valued map $K_j$, we know that there exists $z_{j,k} \in K_j(x_k,y_{-j,k})$ such that $z_{j,k} \rightarrow z_j$.
    Since $y_k \in S_j(x_k)$ then
    \[
    f_j(x_k,y_{j,k},y_{-j,k})\leq f_j(x_k,z_{j,k},y_{-j,k}), \forall k \in \mathbb{N}.
    \]
    Taking limit as $k\to\infty$,  continuity yields $f_j(x,y_{j},y_{-j})\leq f_j(x,z_{j},y_{-j}).$ Since $z_{j}$ was arbitrarily chosen from $K_j(x, y_{-j})$, we conclude that $ y \in S_j(x)$. Thus $S_j$ is closed and hence,  $\mathcal{E}$ has closed graph..
    
    Observe also that $\gph \mathcal{E}$ is also bounded, since 
    \[
    \gph \mathcal{E} \subset X\times Y,
    \]
    and the right-hand set is compact.  Since the objective function $\theta$ is lower semicontinuous, it follows, by the Weierstrass theorem, that the optimization problem of the leader in \eqref{ch05:eq:SLMF} has a solution.
\end{proof}

\begin{note}{The challenges of Bilevel Games}
Literature with respect to Bilevel games is scarce with respect to its counterpart with only one leader and one follower. However, SLMF games have proven to be a very well-structured family of problems. The MPCC reformulations are also valid in this setting (under suitable hypotheses for the GNEP of the followers), and therefore they can be attacked algorithmically. When multiple leaders are to be considered, the scenario is way more complicated. It is worth to mention that for multiple leaders, the shared constraint $y\in \mathcal{E}(x)$ makes the problem a GNEP itself. This field, popularized in the last 10 years, is still wide open to contributions.
\end{note}

\section{The Bayesian Approach for bilevel optimization}
\label{ch05:Bayessian}

Let us consider the bilevel programming problem
\begin{equation}\label{ch05:eq:GeneralBP}
    \begin{array}{cl}
     \displaystyle \min_{x}    & \theta(x,y)  \\
       s.t.  & x\in X,\, y\in S(x),
    \end{array}
\end{equation}
where $S:X\tto Y$ is the set-valued map of solutions of the follower's problem. The ill-posedness of  \eqref{ch05:eq:GeneralBP} is solved by considering a belief of the leader on how the follower will select $y\in S(x)$:
\begin{itemize}
    \item For the optimistic approach, the leader \textbf{believes} that the follower will select $y\in S(x)$ to favor her the most.
    \item For the pessimistic approach, the leader \textbf{believes} that the follower will select $y\in S(x)$ to harm her the most.
\end{itemize}

However, these are not the only options. We can model beliefs using probability distributions. This idea was first introduced by \citep{Mallozzi1996}, and rediscovered in \citep{SalasSvensson2023}. For the rest of the chapter, we will assume that $X$ and $Y$ are nonempty compact sets.
\begin{note}{Borel probabilities and weak-convergence}
 Let $Y$ be a nonempty compact subset of $\R^q$. We denote by $\mathcal{B}/Y)$ the borel $\sigma$-algebra of $Y$ and by $\mathscr{P}(Y)$ the set of all Borel probability measures over $Y$. We endow  $\mathscr{P}(Y)$ with the topology of weak-convergence: that is, for a sequence $(\mu_k)\subset \mathscr{P}(Y)$ and $\mu\in \mathscr{P}(Y)$, we say that $\mu_k$ weak-convergences to $\mu$ if
\[
\forall f\in \mathcal{C}(Y),\quad \int_Y f(y) d\mu_k(y) \to \int_Y f(y) d\mu(y).
\]
In such a case, we write $\mu_k\wto \mu$. Accordingly, a function $\beta:X\to \mathscr{P}(Y)$ will be said \textit{weak-continuous} if for every sequence $(x_n)\subset X$ converging to $x\in X$, one has that $\beta(x_n)\wto \beta(x)$.
\end{note}

In the sequel, we will use a fundamental theorem of probability, called Portemanteu Theorem. While this is very classic in literature and can be found in many monographs, we present the version contained in \citep{SalasSvensson2023}.

\begin{theorem}[Portemanteau Theorem]\label{ch05:thm:Portemanteau} Let $Y$ be a nonempty closed set of $\R^q$, $(\nu_k)$ be a sequence of probability measures of $\mathscr{P}(Y)$, and let $\nu\in \mathscr{P}(Y)$. The following assertions are equivalent:
\begin{enumerate}
    \item[(i)] $\nu_k\wto \nu$.
    \item[(ii)] For every function $f:Y\to\R$ lower semicontinuous and bounded from below, \[
    \liminf_k \int_Y f d\nu_k \geq \int_Y fd\nu.\] 
    \item [(iii)] For every closed set $C\subset Y$, $\limsup_k \nu_k(C)\leq \nu(C)$.
    \item[(iv)] For every open set $U\subset Y$, $\liminf_k\nu_k(U)\geq \nu(U)$.
\end{enumerate}   
\end{theorem}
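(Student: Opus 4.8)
The plan is to prove $(i)$--$(iv)$ equivalent by running the cycle $(i)\Rightarrow(iii)\Rightarrow(iv)\Rightarrow(ii)\Rightarrow(i)$, after first disposing of the trivial equivalence $(iii)\Leftrightarrow(iv)$. The latter is just complementation: if $C\subset Y$ is closed then $U:=Y\setminus C$ is open, $\nu_k(C)=1-\nu_k(U)$ and $\nu(C)=1-\nu(U)$, so $\limsup_k\nu_k(C)\le\nu(C)$ and $\liminf_k\nu_k(U)\ge\nu(U)$ are literally the same inequality. Thus it suffices to prove $(i)\Rightarrow(iii)$, $(iv)\Rightarrow(ii)$ and $(ii)\Rightarrow(i)$ (the implication $(ii)\Rightarrow(iv)$ is then a free one-liner: apply $(ii)$ to the lower semicontinuous function $\ind_U$).

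For $(i)\Rightarrow(iii)$ I would squeeze the indicator of a closed set $C$ from above by the bounded Lipschitz functions $f_n(y):=\max\{0,\,1-n\,d(y,C)\}$, which satisfy $\ind_C\le f_{n+1}\le f_n\le 1$ and $f_n\downarrow\ind_C$ pointwise (since $d(y,C)>0$ for $y\notin C$). For each fixed $n$, weak convergence against the continuous bounded test function $f_n$ gives $\limsup_k\nu_k(C)\le\limsup_k\int_Y f_n\,d\nu_k=\int_Y f_n\,d\nu$; letting $n\to\infty$ and using monotone (or dominated) convergence on the right yields $\limsup_k\nu_k(C)\le\nu(C)$.

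For $(iv)\Rightarrow(ii)$: given $f$ lower semicontinuous and bounded from below, I may add a constant to assume $f\ge 0$, and then invoke the layer-cake identity $\int_Y f\,d\mu=\int_0^\infty\mu(\{f>t\})\,dt$, valid for every $\mu\in\mathscr{P}(Y)$. Lower semicontinuity makes each superlevel set $\{f>t\}$ open, so $(iv)$ applies for each $t$, and Fatou's lemma on $[0,\infty)$ gives
\[
\liminf_k\int_Y f\,d\nu_k=\liminf_k\int_0^\infty\nu_k(\{f>t\})\,dt\ \ge\ \int_0^\infty\liminf_k\nu_k(\{f>t\})\,dt\ \ge\ \int_0^\infty\nu(\{f>t\})\,dt=\int_Y f\,d\nu,
\]
which is exactly $(ii)$ (the estimate stays correct if $\int_Y f\,d\nu=+\infty$). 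Finally, for $(ii)\Rightarrow(i)$, given $f$ continuous and bounded I apply $(ii)$ both to $f$ and to $-f$ (each lower semicontinuous and bounded below) to obtain $\liminf_k\int f\,d\nu_k\ge\int f\,d\nu$ and $\limsup_k\int f\,d\nu_k\le\int f\,d\nu$, hence $\int f\,d\nu_k\to\int f\,d\nu$, i.e. $\nu_k\wto\nu$.

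The one genuinely delicate step is $(iv)\Rightarrow(ii)$: one must justify the layer-cake representation (Tonelli applied to the nonnegative kernel $(y,t)\mapsto\ind_{\{f(y)>t\}}$), recognize that lower semicontinuity is precisely what makes $\{f>t\}$ open, and handle $f$ unbounded above so that both sides may equal $+\infty$. A minor bookkeeping point is that, since $Y$ is only assumed closed rather than compact, the test functions in $(i)$ and in $(ii)\Rightarrow(i)$ should be read as bounded continuous ones; on compact $Y$ this is automatic and matches the definition of weak-convergence recalled above. Everything else is routine measure theory.
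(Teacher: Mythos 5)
Your proof is correct. Note, however, that the paper does not actually prove this theorem: it is stated as a classical result imported from the literature (the text explicitly defers to standard monographs and to the cited source), so there is no in-paper argument to compare yours against. What you have written is the standard textbook proof, organized as the cycle $(i)\Rightarrow(iii)\Leftrightarrow(iv)\Rightarrow(ii)\Rightarrow(i)$: the Lipschitz approximation $f_n(y)=\max\{0,1-n\,d(y,C)\}$ of $\ind_C$ for $(i)\Rightarrow(iii)$, complementation for $(iii)\Leftrightarrow(iv)$, the layer-cake formula plus Fatou for $(iv)\Rightarrow(ii)$, and the $\pm f$ trick for $(ii)\Rightarrow(i)$ are all sound, and the delicate points you flag (openness of superlevel sets of a lower semicontinuous function, possible infinite integrals, Tonelli for the layer-cake identity) are exactly the right ones. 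Your remark that the test class in the definition of $\wto$ must be read as \emph{bounded} continuous functions when $Y$ is merely closed rather than compact is a genuine and worthwhile clarification, since the paper's definition of weak convergence is given only for compact $Y$ (where $\mathcal{C}(Y)=\mathcal{C}_b(Y)$ automatically), while the theorem is stated for closed $Y$. In short: nothing is missing, and your write-up supplies a proof the paper omits.
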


With all these elements, we are ready to formulate the Bayesian approach for \eqref{ch05:eq:GeneralBP}. We first need to formalize what a belief should be in this setting.
\begin{definition}[Belief]\label{ch05:def:Belief} Let $S:X\tto Y$ be a set-valued map with nonempty measurable values (i.e., $S(x)\in\mathcal{B}(Y)\setminus\{\emptyset\}$ for every $x\in X$). A \textbf{belief over $\mathbf{S}$} is a mapping $\beta:X\to \mathscr{P}(Y)$ assigning to each $x\in X$ a probability measure $\beta(x) = \beta_x$ that verifies
\[
\forall x\in X,\quad \beta_x(S(x)) = 1.
\]   
\end{definition}

If the leader has a belief $\beta$ over $S$, it means that for every decision $x\in X$ she predicts that the follower will draw $y\in S(x)$ following the probability measure $\beta_x$. That is, the leader models the response of the follower as a random variable following the probability distribution $\beta_x$.

\begin{formulation}{Bayesian approach}
    For a given belief $\beta$ over the set-valued map $S$, the Bayesian approach of \eqref{ch05:eq:GeneralBP} is given by
    \begin{equation}\label{ch05:eq:BayesianApproach}
    \min_{x\in X} \mathbb{E}_{y\sim\beta_x}[ \theta(x,y)], 
    \end{equation}
    where $\mathbb{E}_{y\sim\beta_x}[ \theta(x,y)] := \int_{S(x)}\theta(x,y)d\beta_x(y)$. All the bilevel complexity is hidden now in the belief $\beta$.
\end{formulation}

A natural application of the Bayesian approach is to consider a belief based on the uniform distributions on the sets $S(x)$. This captures the idea of having no information about how the follower selects $y\in S(x)$.
\begin{formulation}{Neutral approach}
    Suppose that $S:X\tto Y$ has nonempty closed convex values. We define the neutral belief $\iota:X\to \mathscr{P}(Y)$ given by
    \[
    \iota_x(A) = \frac{\mathcal{H}^{d_x}(A\cap S(x))}{\mathcal{H}^{d_x}(S(x))},
    \]
    where $d_x$ is the Hausdorff dimension of $S(x)$. Note that under the convexity assumption, $d_x$ is always integer and $\mathcal{H}^{d_x}(S(x))>0$, making the neutral belief well-defined. The Neutral approach is then the Bayesian approach of \eqref{ch05:eq:BayesianApproach} considering the neutral belief $\iota$.
\end{formulation}

\begin{example}
		Let us consider in $\R^2$ the polygon $P$ given as the convex hull of the points $(0,4)$, $(8,0)$, $(8,8)$, $(10,1)$ and $(10,5)$. Now, consider the reaction map $S:[0,10]\tto [0,8]$ defined by the equality $\gph S = P$, and the leader's problem
		\begin{align*}
			\min_x\{ y:\, y\in S(x), x\in \dom(S)\},
		\end{align*}
		which is clearly ill-posed due to the multiplicity of optimal reactions of the follower. Figure \ref{fig:exampleneutral} compares the optimistic, pessimistic and neutral value functions. The optimistic, pessimistic and neutral solutions are $x=8$, $x=0$ and $x=10$, respectively. The optimal values are $\varphi^o(8)=0$, $\varphi^p(x) = 4$ and $\varphi^n(10) = 3$.
		
		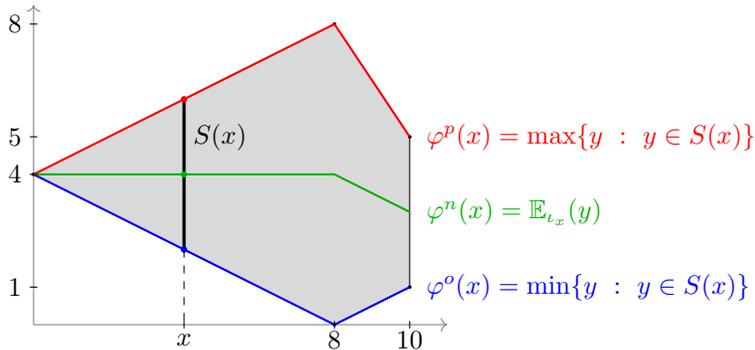
\begin{figure}[ht]
			\begin{center}
				\begin{tikzpicture}[scale=.5]
					\draw[->,gray] (0,0) -- (11,0);
					\draw[->,gray] (0,0) -- (0,8.5);
					
					\filldraw[fill=gray!30!white] (0,4) -- (8,0) -- (10,1) -- (10,5) -- (8,8) -- (0,4);
					\filldraw[black] (0,4) circle (1pt);
					\filldraw[black] (8,0) circle (1pt);
					\filldraw[black] (10,1) circle (1pt);
					\filldraw[black] (10,5) circle (1pt);
					\filldraw[black] (8,8) circle (1pt);
					
					%S(x)
					\draw[black, very thick] (4,2)--(4,6);
					\filldraw[red] (4,6) circle (2pt);
					\filldraw[black!30!green] (4,4) circle (2pt);
					\filldraw[blue] (4,2) circle (2pt);
					\node[right] at (4,5) {$S(x)$};
					\draw[dashed] (4,2)--(4,0);
					\draw (4,-0.1) -- (4,0.1);
					\node at (4,-.4) {$x$};
					%fin S(x)
					
					\draw[red, thick] (10,5) -- (8,8) -- (0,4);
					\draw[blue, thick] (0,4) -- (8,0) -- (10,1);
					\draw[black!30!green, thick] (0,4) -- (8,4) -- (10,3);
					
					\node[blue,right] at (10.2,1) {$\varphi^o(x) = \min\{ y\ :\ y\in S(x)  \}$};
					\node[red,right] at (10.2,5) {$\varphi^p(x)= \max\{ y\ :\ y\in S(x)  \}$};
					\node[black!30!green,right] at (10.2,3) {$\varphi^n(x) = \E_{\iota_x}(y)$};
					\draw (8,-0.1) -- (8,0.1);
					\draw (10,-0.1) -- (10,0.1);
					\node at (10,-.4) {10};
					\node at (8,-.4) {8};
					
					\draw (-0.1,1) -- (0.1,1);
					\draw (-0.1,4) -- (0.1,4);
					\draw (-0.1,5) -- (0.1,5);
					\draw (-0.1,8) -- (0.1,8);
					\node at (-.5,1) { 1};
					\node at (-.5,4) { 4};
					\node at (-.5,5) { 5};
					\node at (-.5,8) { 8};
				\end{tikzpicture}
			\end{center}
			\caption{Functions $\varphi^o$, $\varphi^p$ and $\varphi^n$ are the value functions that the leader aims to minimize for the optimistic, pessimistic and neutral approach, respectively.} 
			%The neutral optimal solution is not in the convex hull of the sets of optimistic and pessimistic solutions. }
			\label{fig:exampleneutral}
		\end{figure}
		\label{ex:blpapproaches}
	\end{example}

We now provide the elemental theorem of existence for the Bayesian approach: If the belief is weak-continuous, then \eqref{ch05:eq:BayesianApproach} will admit solutions. We present the exact statement and proof of \citep{SalasSvensson2023}. 

\begin{theorem}\label{ch05:thm:ExistenceSingleLeader} Let $Y$ be a nonempty compact set of $\R^q$ and let us consider \eqref{ch05:eq:BayesianApproach} with belief $\beta$ over $S:X\tto Y$. If one has that
		\begin{enumerate}
			\item[(i)]the cost function $\theta$ is lower semicontinuous,
			\item[(ii)] the constraints set $X$ is nonempty and compact, and
			\item [(iii)] the belief $\beta: X\to\mathscr{P}(Y)$ is weak continuous,
		\end{enumerate}
		then, $x\mapsto \E_{\beta_x}(\theta(x,\cdot))$ is lower semicontinuous and so, \eqref{ch05:eq:BayesianApproach}  admits at least one solution.
	\end{theorem}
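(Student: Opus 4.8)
The plan is to prove that the value function
\[
V(x)\ :=\ \E_{\beta_x}\!\bigl[\theta(x,\cdot)\bigr]\ =\ \int_{S(x)}\theta(x,y)\,d\beta_x(y)\ =\ \int_Y\theta(x,y)\,d\beta_x(y)
\]
(the last equality because $\beta_x(S(x))=1$) is lower semicontinuous on $X$; since $X$ is nonempty and compact, the existence of a minimizer of \eqref{ch05:eq:BayesianApproach} then follows from the Weierstrass theorem. Fix $x_0\in X$ and a sequence $x_n\to x_0$; the goal is $\liminf_n V(x_n)\ge V(x_0)$. The obstacle is that \emph{both} the integrand $\theta(x_n,\cdot)$ and the measure $\beta_{x_n}$ move with $n$, so neither the defining property of $\wto$ (a fixed integrand) nor Portemanteau's Theorem~\ref{ch05:thm:Portemanteau}(ii) (a fixed integrand) applies directly.

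To handle this I would regularize $\theta$ jointly in $(x,y)$. Since $X\times Y$ is compact and $\theta$ is lower semicontinuous, $\theta$ attains a finite minimum there, and up to subtracting a constant (which only shifts $V$) we may assume $\theta\ge 0$. For $k\in\N$ put
\[
\theta_k(x,y)\ :=\ \inf_{(x',y')\in X\times Y}\Bigl\{\,\theta(x',y')+k\bigl(d_X(x,x')+d_Y(y,y')\bigr)\Bigr\}.
\]
Each $\theta_k$ is nonnegative, $k$-Lipschitz on $X\times Y$ (an infimum of $k$-Lipschitz functions bounded below by $0$), satisfies $\theta_k\le\theta_{k+1}\le\theta$, and, by lower semicontinuity of $\theta$, $\theta_k(x,y)\uparrow\theta(x,y)$ for every $(x,y)$ (the standard Moreau--Yosida/inf-convolution approximation). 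In particular $\theta_k(x_0,\cdot)\in\mathcal{C}(Y)$ and is bounded.

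Now fix $k$. Because $\theta\ge\theta_k$ and $\beta_{x_n}$ is a probability measure concentrated on $S(x_n)\subset Y$, and using $\theta_k(x_n,y)\ge\theta_k(x_0,y)-k\,d_X(x_n,x_0)$ for all $y$,
\[
V(x_n)\ \ge\ \int_Y\theta_k(x_n,y)\,d\beta_{x_n}(y)\ \ge\ \int_Y\theta_k(x_0,y)\,d\beta_{x_n}(y)\ -\ k\,d_X(x_n,x_0).
\]
Here $k\,d_X(x_n,x_0)\to 0$, while $\int_Y\theta_k(x_0,\cdot)\,d\beta_{x_n}\to\int_Y\theta_k(x_0,\cdot)\,d\beta_{x_0}$ since $\theta_k(x_0,\cdot)$ is continuous and bounded on $Y$ and $\beta_{x_n}\wto\beta_{x_0}$. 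Hence $\liminf_n V(x_n)\ge\int_Y\theta_k(x_0,y)\,d\beta_{x_0}(y)$ for every $k$. Letting $k\to\infty$ and applying the monotone convergence theorem to $\theta_k(x_0,\cdot)\uparrow\theta(x_0,\cdot)$ yields
\[
\liminf_n V(x_n)\ \ge\ \int_Y\theta(x_0,y)\,d\beta_{x_0}(y)\ =\ V(x_0),
\]
which is the claimed lower semicontinuity.

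The decisive point --- the simultaneous motion of integrand and measure --- is exactly what the joint Lipschitz regularization defuses: it lets one freeze the integrand at $x_0$ up to a vanishing error, invoke weak convergence of the measures, and recover $\theta$ in the limit $k\to\infty$ at the cost of only a monotone underestimate. One could equally finish the penultimate step with Portemanteau's Theorem~\ref{ch05:thm:Portemanteau}(ii) applied to the lower semicontinuous map $\theta_k(x_0,\cdot)$, but the continuous-test-function form of $\wto$ already suffices.
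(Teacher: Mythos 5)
Your proof is correct, and it reaches the conclusion by a genuinely different technical route than the one in the notes. Both arguments share the same strategy --- replace $\theta$ by a regularized minorant so that the integrand can be frozen at $x_0$ while only the measures move --- but the regularization device differs. The notes exploit compactness of $Y$ directly: for each $\varepsilon>0$ they extract a finite cover $\{V_k\}$ from the lower semicontinuity neighborhoods of $\theta$ and build a piecewise minorant $\varphi_\varepsilon(y)=\min\{\theta(x_0,y_k)-\varepsilon : y\in V_k\}$, which is lower semicontinuous but not continuous; this forces them to invoke Portemanteau's Theorem~\ref{ch05:thm:Portemanteau}(ii) to pass to the limit in $n$, and then Fatou's lemma to pass to the limit in $\varepsilon$ (the family $\varphi_{1/j}$ is not monotone). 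You instead use the Pasch--Hausdorff/inf-convolution envelope $\theta_k$ \emph{jointly} in $(x,y)$, which buys three simplifications: the approximants are continuous (indeed Lipschitz), so the bare definition of $\wto$ via continuous bounded test functions suffices and Portemanteau is not needed; the Lipschitz modulus in $x$ gives the clean quantitative error $k\,d_X(x_n,x_0)\to 0$ when freezing the integrand at $x_0$; and the approximants increase to $\theta$, so the final passage $k\to\infty$ is monotone convergence rather than Fatou. The price is that you must import the standard (but unproved here) fact that the Lipschitz regularization of a bounded-below lower semicontinuous function converges pointwise from below to it; the notes' cover construction is more self-contained and elementary. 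Both arguments are complete; yours is arguably the cleaner of the two once that standard lemma is granted.
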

	
	\begin{proof}
		We will only show the lower semicontinuity of $x\mapsto \E_{\beta_x}(\theta(x,\cdot))$ since the existence of solutions will trivially follow from the compactness of $X$. Let $(x^n)\subset X$ be a sequence converging to a point $x\in X$.
		
		Fix $\varepsilon>0$. Since $\theta$ is lower semicontinuous, for each $y\in Y$ there exists a neighborhood $U_y\times V_y$  
		of $(x,y)$ such that $\mathrm{diam}(V_{y})\leq \varepsilon$ and
		\[
		\forall (x',y')\in (U_y\times V_y) \cap (X\times Y),\quad \theta(x',y')> \theta(x,y)-\varepsilon.
		\]
		Without loss of generality, we may choose $U_{y}\times V_{y}$ to be closed. 
		Now, since $Y$ is compact, there exists a finite cover $\mathcal{V}_{\varepsilon}=\{V_{k}\}_{k=1}^m$ of $Y$, included in $\{V_{y}\ :\  y\in Y\}$. Let $\{y_{k}\}_{k=1}^m$ be a finite sequence such that $V_k=V_{y_k}$. 
		Consider the neighborhood of $x$ given by $U:=\bigcap_{k=1}^m U_{y_k}$. Note that $U$ satisfies
		\[
		\forall (x',y')\in (U\cap X)\times (V_{k}\cap Y),\quad  \theta(x',y')> \theta(x,y_k)-\varepsilon, 
		\]
		for each $k\in \{1,\ldots,m\}$. Let us define the function $\varphi_{\varepsilon}:Y\to \R$ given by
		\[
		\varphi_{\varepsilon}(y) = \min\{ \theta(x,y_k)-\varepsilon\ :\ y\in V_k\}. 
		\]
		
		By construction, $\varphi_{\varepsilon}$ is lower semicontinuous, bounded from below and satisfies $\varphi_{\varepsilon}\leq \theta(x',\cdot)$ for any $x'\in U$.
		In particular, for $n\in\N$ large enough $x^n\in U$, and hence, for every $y\in Y$, we have $\theta(x^n,y) \geq \varphi_{\varepsilon}(y)$. This yields, by the Portemanteau Theorem (see Theorem \ref{ch05:thm:Portemanteau}), that
		\begin{align*}
			\liminf_n \E_{\beta_{x^n}}(\theta(x^n,y)) &= \liminf_n\int_Y \theta(x^n,y) d\beta_{x^n}(y)\\
			&\geq \liminf_n\int_Y \varphi_{\varepsilon}(y)d\beta_{x^n}(y)\geq \int_Y \varphi_{\varepsilon}(y)d\beta_{x}(y).
		\end{align*}
		
		Now, let $\varepsilon_j = \frac{1}{j}$ and let $\varphi_j :=\varphi_{\varepsilon_j}$. We claim that $\varphi_j$ converges pointwise to $\theta(x,\cdot)$. Indeed, choose $y\in Y$. First, since $\varphi_j \leq \theta(x,\cdot)$ for every $j\in \N$, it is clear that $\limsup_j \varphi_j(y)\leq \theta(x,y)$. 
		
		By construction, for each $j\in \N $, there exists $y_j$ such that
		\[
		\varphi_j(y) = \theta(x,y_j)-\frac{1}{j}\quad\text{and}\quad\|y_j-y\|\leq \frac{1}{j}.
		\]
		Then, we can write
		\begin{align*}
			\liminf_j \varphi_j(y) &= \liminf_j \left(\theta(x,y_j)-\frac{1}{j}\right)\geq \theta(x,y),
			%&\geq \liminf_j \theta(x,y_j) + \liminf_j \left(-\frac{1}{j}\right)\\
			%&\geq \theta(x,y),
		\end{align*}
		where the last inequality follows from the fact that $\theta$ is lower semicontinuous and that $y_j\to y$. Thus, we have shown that
		\[
		\theta(x,y) \leq \liminf_j \varphi_j(y) \leq \limsup_j \varphi_j(y)\leq \theta(x,y),
		\]
		proving that $\varphi_j(y)\to \theta(x,y)$, and, by arbitrariness of $y\in Y$, proving our claim. 
		
		Now, since $\theta$ attains its minimum over $X\times Y$, we have that for every $j\in \N$, $\varphi_j \geq \min_{X\times Y} \theta -1$, and so, applying the Fatou Lemma for the measure $\beta_x$, we can write that
		\begin{align*}
			\liminf_n \E_{\beta_{x^n}}(\theta(x^n,\cdot)) &\geq \liminf_j \int_Y \varphi_j(y)d\beta_x(y)\\ 
			&\geq \int_Y \theta(x,y)d\beta_{x}(y) = \E_{\beta_{x}}(\theta(x,\cdot)).
		\end{align*}
		Since $(x^n)$ was an arbitrary sequence, we have shown the lower semicontinuity of the mapping $x\mapsto \E_{\beta_x}(\theta(x,\cdot))$, which finishes the proof. 
	\end{proof}

Note that the statement of Theorem \ref{ch05:thm:ExistenceSingleLeader} does not consider any hypothesis on the set-valued map $S$. This yields the question on how easy or hard is to verify weak-continuity of a belief. In general, this can be very challenging.

   \begin{example}\label{ch05:example:triangletosegment}
    It is not hard to show that if the neutral belief over a set-valued map $S$ with compact and convex values were weak-continuous, then the centroid map $x\mapsto \mathfrak{c}(S(x))$ would be continuous as well. Consider $S:[0,1]\rightrightarrows [0,1]^2$ defined by 
	\[
	S(x)=\{(y_1,y_2)\in[0,1]^2: y_2\leq x\cdot y_1 \}
	\]
    
    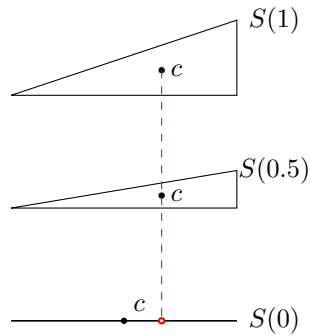
\begin{figure}[ht]
    \centering
			\begin{tikzpicture}
				\draw (0,0) -- (3,0) -- (3,1) --     (0,0);
				\node at (3.5,1) {$S(1)$};
				\filldraw[black] (2,0.333) circle (1pt) node[anchor=west] {$c$};

				\draw (0,0-1.5) -- (3,0-1.5) -- (3,.5-1.5) --     (0,0-1.5);
				\node at (3.5,.5-1.5) {$S(0.5)$};
				\filldraw[black] (2,0.166-1.5) circle (1pt) node[anchor=west] {$c$};

				\draw (0,0-3) -- (3,0-3) -- (3,0-3) --     (0,0-3);
				\node at (3.5,0-3) {$S(0)$};
				\filldraw[black] (1.5,0-3) circle (1pt) node[above right] {$c$};
				\draw[red,dashed](2,0.333)--(2,-3);
				\filldraw[fill=white, draw=red,thick] (2,-3) circle (1pt);
		\end{tikzpicture}
        \caption{Illustration of the variation of the centroid of $S$.}\label{ch05:fig:TriangleToSegment}
  \end{figure}

  In this case, one can compute explicitly the centroid map, as illustrated in Figure~\ref{ch05:fig:TriangleToSegment}. Indeed
  \[
  \mathfrak{c}(S(x))=\begin{cases}
      (2/3, x/3)\quad& \text{ if } x\in (0,1]\\
      (1/2,0)\quad &\text{ if }x=0.
  \end{cases}
  \]
  We deduce that in this example, the neutral belief can not be continuous, even though $S$ is a continuous set-valued map. 
 \end{example}

The main issue in Example \ref{ch05:example:triangletosegment} is the change of dimension. This changes of dimension can happen even in the fundamental case of linear bilevel programming. Nevertheless, it is possible to prove that for linear bilevel programming, the neutral belief is indeed weak-continuous, leading to the main  and last result of this section.

\begin{theorem}[Salas, Svensson, 2023]\label{ch05:thm:BayesianForLinear} Suppose that
\[
S(x) = \argmin_{y}\{ c^{\top}y\;\mid\; Ax+By \leq b  \},
\]
and that $X\subset\dom S$. Then \eqref{ch05:eq:BayesianApproach} admits a solution under the Neutral approach (i.e. Bayesian approach with $\beta = \iota$), provided that
		\begin{enumerate}
			\item[(i)] The decision set $X$ of the leader is  nonempty and compact.
            \item[(ii)] The cost function $\theta$ is lower semicontinuous.
			\item[(iii)] The constraint set $K(x):=\{y\in \R^q: Ax+By\leq b\}$ is bounded for each $x\in X$.
		\end{enumerate}
	\end{theorem}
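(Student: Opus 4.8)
The plan is to reduce to Theorem~\ref{ch05:thm:ExistenceSingleLeader}: hypotheses (i)--(ii) of the present statement are precisely hypotheses (i)--(ii) there, so once the neutral belief $\iota : X \to \mathscr{P}(Y)$ (restricted to $X$) has been shown to be weak-continuous, $x \mapsto \E_{\iota_x}[\theta(x,\cdot)]$ is lower semicontinuous, and minimizing it over the compact set $X$ produces a solution. First I would fix an ambient compact set and check that $\iota$ is well defined. Each $K(x)$ is a bounded polyhedron, so its recession cone $\{y : By \le 0\}$ equals $\{0\}$, and since this cone does not depend on $x$, a normalization argument (if $y_n \in K(x_n)$ with $x_n \in X$ and $\|y_n\|\to\infty$, any cluster point of $y_n/\|y_n\|$ would lie in $\{y:By\le 0\}=\{0\}$, which is absurd) shows that $\bigcup_{x\in X}K(x)$ is bounded, hence contained in a closed ball $Y$. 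For $x\in X$ we have $\emptyset\neq S(x)\subset K(x)\subset Y$ because $X\subset\dom S$; $S(x)$ is a face of the polyhedron $K(x)$, hence a nonempty compact convex set, so its Hausdorff dimension $d_x$ is an integer with $\mathcal{H}^{d_x}(S(x))\in(0,+\infty)$, and $\iota_x$ is well defined.

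It then remains to prove that $\iota$ is weak-continuous on $X$, which I would do using parametric linear programming. Put $\varphi_0(x)=\min\{c^\top y:y\in K(x)\}$. Since $\gph K|_X$ is compact, $K$ is continuous (lower semicontinuity from Theorem~\ref{ch03:thm:LSC-of-Linear}, closedness from Proposition~\ref{ch03:prop:closednessConstrained}, hence upper semicontinuity from Corollary~\ref{ch03:cor:USC-ClosedGraph}), so $\varphi_0$ is continuous by the second form of the Berge Maximum theorem (Theorem~\ref{ch03:thm:Berge-secondForm}), and it is moreover piecewise affine and convex. Then $S(x)=\{y:By\le b-Ax,\ c^\top y\le\varphi_0(x)\}$ is closed (Proposition~\ref{ch03:prop:closednessConstrained}); the lower semicontinuity argument from the proof of Theorem~\ref{ch03:thm:Existence-Pessimistic-Linear} applies verbatim to $S$, and together with closedness and local precompactness (Corollary~\ref{ch03:cor:USC-ClosedGraph}) this makes $S$ continuous. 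Standard parametric-LP theory now supplies a finite polyhedral subdivision $\{C_\ell\}$ of $\dom S$ and, for each cell, a fixed index set $\mathcal I_\ell$ such that $S(x)=\{y:By\le b-Ax,\ B_iy=(b-Ax)_i\ \forall i\in\mathcal I_\ell\}$ for $x\in C_\ell$; hence on $C_\ell$ the set $S(x)$ is the image of a fixed polytope under an $x$-affine map $z\mapsto M_\ell z+g_\ell(x)$ whose linear part $M_\ell$ is independent of $x$, and $d_x$ is constant on $\rint C_\ell$. On $\rint C_\ell$, weak-continuity of $\iota$ is then routine: $x\mapsto S(x)$ is a continuously varying family of convex bodies of fixed dimension, so the renormalized integrals $\int_Y f\,d\iota_x$ converge for every $f\in\mathcal C(Y)$.

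The heart of the proof --- and the step I expect to be the main obstacle --- is weak-continuity across cells of different dimension: given $x_n\to x_0$ in $X$ with $d_n:=\dim S(x_n)>d_0:=\dim S(x_0)$, one must show $\iota_{x_n}\wto\iota_{x_0}$. This is exactly the situation in which the bilinear Example~\ref{ch05:example:triangletosegment} fails, so the argument has to exploit that in the description $S(x)=\{y:By\le b-Ax,\ c^\top y\le\varphi_0(x)\}$ only the right-hand sides, never the facet normals (the rows of $B$ and the vector $c$), vary with $x$. Combined with the Hausdorff convergence $S(x_n)\to S(x_0)$ coming from continuity of $S$, this forces the $d_n$-dimensional polytopes $S(x_n)$ to flatten toward the face $S(x_0)$ along a fixed family of directions, with the ``slanted'' facets active only on a subset of $S(x_n)$ whose $\mathcal{H}^{d_n}$-measure is an asymptotically negligible fraction of $\mathcal{H}^{d_n}(S(x_n))$. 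A Fubini computation in those fixed flattening directions then shows that $\iota_{x_n}$ transports its mass onto $S(x_0)$ with asymptotically uniform density, i.e. $\iota_{x_n}\wto\iota_{x_0}$; checking the Portemanteau inequalities $\limsup_n\iota_{x_n}(C)\le\iota_{x_0}(C)$ for closed sets $C$ along these lines (cf. Theorem~\ref{ch05:thm:Portemanteau}), uniformly over the finitely many incidence patterns of the subdivision, completes the verification of weak-continuity, and Theorem~\ref{ch05:thm:ExistenceSingleLeader} then yields the claimed solution.
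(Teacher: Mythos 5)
The paper itself does not prove this theorem: it states it and immediately defers to \citep{SalasSvensson2023}, describing the proof as ``highly technical'' and based on a notion of rectangular continuity, so there is no in-text argument to compare yours against. Judged on its own terms, your proposal has the right architecture --- and, as far as one can tell from the surrounding discussion, the same architecture as the intended one: reduce to Theorem~\ref{ch05:thm:ExistenceSingleLeader} by proving weak-continuity of the neutral belief. Your preliminary steps are sound: the uniform boundedness of $\bigcup_{x\in X}K(x)$ via the recession cone $\{y:By\le 0\}=\{0\}$, the well-definedness of $\iota_x$ on the nonempty compact face $S(x)$, and the continuity of $S$ by recycling Theorem~\ref{ch03:thm:LSC-of-Linear}, Proposition~\ref{ch03:prop:closednessConstrained}, Corollary~\ref{ch03:cor:USC-ClosedGraph} and the argument of Theorem~\ref{ch03:thm:Existence-Pessimistic-Linear} are all legitimate.

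The gap is exactly where you say you expect it, and your treatment of it is a heuristic, not a proof. The two decisive claims --- that the ``slanted'' facets of $S(x_n)$ are active only on a portion of $S(x_n)$ whose $\mathcal{H}^{d_n}$-measure is an asymptotically negligible fraction of $\mathcal{H}^{d_n}(S(x_n))$, and that a Fubini disintegration along ``fixed flattening directions'' then yields asymptotically uniform mass transport onto $S(x_0)$ --- are asserted, not established, and they are precisely the content of the cited result. That this cannot be waved through is shown by how delicately it depends on the fixed-normal structure: a rhombus with vertices $(\pm 1,0)$, $(0,\pm\varepsilon)$ Hausdorff-converges to the segment $[-1,1]\times\{0\}$ as $\varepsilon\to 0$, yet its normalized uniform measures converge to the tent density $1-|y_1|$, not to $\iota$ of the limit segment; the only thing ruling this out in your setting is that the facet normals come from the fixed rows of $B$ and from $c$, and turning that observation into the measure estimate (what the paper calls the sets changing dimension ``in a balanced way'') is the hard quantitative work. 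In addition, the finite polyhedral subdivision of $X$ into critical regions with constant active set, the constancy of $d_x$ on relative interiors of cells, and the behaviour of a sequence $x_n\to x_0$ that hops between cells of different dimensions are all invoked from ``standard parametric-LP theory'' without proof; they are standard but not free, and the cross-cell limit is where Example~\ref{ch05:example:triangletosegment} shows the statement could fail for a general continuous $S$. As written, your argument correctly reduces the theorem to the weak-continuity of $\iota$ and correctly identifies why linearity should save the day, but it does not yet prove that weak-continuity.
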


The proof of Theorem~\ref{ch05:thm:BayesianForLinear} is highly technical, and it is the main result of \citep{SalasSvensson2023}. It can be extended to other types of functions and it is based on an enhanced notion of continuity of set-valued maps, called rectangular continuity. The key point is that the solution sets of parametric linear problems might change of dimension, but they do in a balanced way. 

\begin{note}{The challenges of the Bayesian approach}

Up to this moment, the Bayesian approach is in its early stages. Besides the original work of \citep{Mallozzi1996} where the concept was introduced, no developments can be found in the literature up to \citep{SalasSvensson2023}. Probably because working with beliefs requires some heavy mathematics. Nevertheless, in \citep{SalasSvensson2023} it was shown that the approach is well-possed for several fundamental cases in the theory of Bilevel programming. After this, a first contribution in algorithms to solve these problems was provided in \citep{MunozSalasSvensson2023FullLength} for the particular case of vertex-supported beliefs in linear programming. However, there is a lot to do here: algorithms, applications, variational properties. Everything is open. It is worth to mention that the name ``Bayesian'' was chosen because of the potential of beliefs $\beta$ to be refined by learning. This is also something to be done. Note also that the Bayesian approach opens the door to solve inconsistencies in Multi-Leader-Follower games. 
\end{note}

\newpage
\section{Problems}\label{ch05:sec:Problems}

\begin{problem}[Failing existence of solutions in MLSF games] Consider the Bilevel game with two leaders and one follower given by
\[
\begin{array}{|c|c||c|}
        \hline
        &&\\
        \begin{array}{cl}
            \displaystyle\min_{x_1}   & \frac{1}{2}x_1+y\\
            \text{s.t.} & x_1\in [0,1].
        \end{array}
        &
        \begin{array}{cl}
            \displaystyle\min_{x_2}   & -\frac{1}{2}x_2 - y\\
            \text{s.t.} & x_2\in [0,1].
        \end{array}
        &
        \begin{array}{cl}
            \displaystyle\min_{y}   & y(x_1+x_2-1) + \frac{1}{2}y^2\\
            \text{s.t.} & y\geq 0.
        \end{array}\\
        
        &&\\
        \hline
        \mathrm{Leader}_1&\mathrm{Leader}_2&\mathrm{Follower}\\
        \hline
    \end{array}
\]  
Show that the follower's best response is single-valued, and show that the game doesn't admit any equilibrium.
\end{problem}
\vspace{0.5cm}

\begin{problem}[Continuity of the centroid as necessary condition] Let $K\subset\R^q$ be a nonempty convex compact set. The centroid of $K$ is given by 
\[
\mathfrak{c}(K) := \frac{1}{\mathcal{H}^{d}(K)}\int_K y d\mathcal{H}^{d}(y),
\]
where $d$ is the Hausdorff dimension of $K$ and $\mathcal{H}^{d}$ is the $d$-dimensional Hausdorff measure. Note that $\mathcal{H}^{d}$ coincides with the Lebesgue measure over the affine space generated by $K$.\\

Show that if $S:X\tto Y\subset\R^q$ is a set-valued map with nonmepty convex and compact values such that the neutral belief over $S$ is weak-continuous, then the centroid map $x\mapsto \mathfrak{c}(S(x))$ must be continuous as well.
\end{problem}

\cleardoublepage

\phantomsection

\addcontentsline{toc}{chapter}{Bibliography}
\nocite{*}
%\printbibliography
%\bibliographystyle{plain}
%\bibliographystyle{alpha}
\bibliographystyle{abbrvnat}
\bibliography{chapters/References}
\end{document}